\numberwithin{equation}{section}
\newcommand{\R}{\ensuremath{\mathbb{R}}}
\renewcommand{\b}{\ensuremath{\beta}}
\newcommand{\e}{\ensuremath{\varepsilon}}
\newcommand{\p}{\ensuremath{\partial}}
\renewcommand{\d}{\ensuremath{\delta}}
\newcommand{\G}{\ensuremath{\Gamma}}
\newcommand{\s}{\ensuremath{\sigma}}
\renewcommand{\a}{\ensuremath{\alpha}}
\renewcommand{\L}{\ensuremath{\mathcal{L}}}
\renewcommand{\O}{\ensuremath{\mathcal{O}}}
\DeclarePairedDelimiter\abs{\lvert}{\rvert}
\DeclarePairedDelimiter{\ip}\langle\rangle
\DeclarePairedDelimiter{\bip}{\big\langle}{\big\rangle}
\DeclarePairedDelimiter{\nrm}\lVert\rVert
\DeclarePairedDelimiter{\norm}\lVert\rVert
\DeclareMathOperator{\tr}{tr}
\newcommand{\eb}{\ensuremath{\mathcal{E}_B}}
\def\Re{\mathop\mathrm{Re}\nolimits}			
\newcommand{\sref}[1]{(\ref{#1})}                       
\newtheorem{thm}{Theorem}[section]
\newtheorem{cor}[thm]{Corollary}
\newtheorem{lem}[thm]{Lemma}
\newtheorem{prop}[thm]{Proposition}
\newtheorem*{defn*}{Definition}
\newlength{\defbaselineskip}
\newcommand{\setlinespacing}[1]%
           {\setlength{\baselineskip}{#1 \defbaselineskip}}
\newcommand{\singlespacing}{\setlength{\baselineskip}{\defbaselineskip}}
\begin{document}

\begin{frontmatter}
\title{Stability of Traveling Waves on Exponentially Long Timescales in Stochastic Reaction-Diffusion Equations}
\journal{SIADS}

\author[LD1]{C. H. S. Hamster\corauthref{coraut}},
\corauth[coraut]{Corresponding author. }
\author[LD2]{H. J. Hupkes},
\address[LD1]{
  Mathematisch Instituut - Universiteit Leiden \\
  P.O. Box 9512; 2300 RA Leiden; The Netherlands \\
  Email:  {\normalfont{\texttt{c.h.s.hamster@math.leidenuniv.nl}}}
}
\address[LD2]{
  Mathematisch Instituut - Universiteit Leiden \\
  P.O. Box 9512; 2300 RA Leiden; The Netherlands \\ Email:  {\normalfont{\texttt{hhupkes@math.leidenuniv.nl}}}
}

\date{\today}

\begin{abstract}
\singlespacing
In this paper we establish the meta-stability of traveling waves for a class
of reaction-diffusion equations forced by a multiplicative noise term. In
particular, we show that the phase-tracking technique developed
in \cite{hamster2017,hamster2020} can be maintained over timescales that are exponentially long with respect to the noise intensity. This is achieved
by combining the generic chaining principle with a mild version
of the Burkholder-Davis-Gundy inequality to establish logarithmic 
supremum bounds for  stochastic convolutions in the critical regularity regime.
\end{abstract}

\begin{subjclass}
\singlespacing
35K57 \sep 35R60 .
\end{subjclass}

\begin{keyword}
\singlespacing
traveling waves, stochastic forcing,
nonlinear stability, stochastic phase shift.
\end{keyword}

\end{frontmatter}

\section{Introduction}
\label{p4:sec:int}

In this paper we focus on the stochastic Nagumo equation
\begin{align}
\begin{split}
\label{p4:eq:mr:main:spde}
dU &=
  \left[ \rho\mspace{1mu} \partial_{xx} U + f(U)\right] dt
  + \sigma g(U) d W^Q_t,\\
\end{split}
\end{align}
in which we take $U=U(x,t)$ with $x \in \R$ and $t \geq 0$. The nonlinearities
are given by
\begin{equation}
    f(u)=u(1-u)(u-a),
    \qquad \qquad
    g(u)=u(1-u)\chi(u)
\end{equation}
for a parameter $a \in (0,1)$ and a smooth cut-off function $\chi(u)$ that forces $g$ to be bounded and globally Lipschitz continuous on $\R$.
The stochastic forcing is generated by the cylindrical $Q$-Wiener process $W^Q_t$ characterized by the convolution operator
\begin{equation}
Q: L^2(\R) \to L^2(\R), \qquad \qquad
[Qv](x) =  \int_{-\infty}^\infty e^{-(x-y)^2} v(y) \, d y.
\end{equation}
In particular, our noise satisfies
the formal relation
\begin{equation}
    E \Big[  d W^Q_s(x_0)d W^Q_t(x_1) \Big] = \delta(t - s) e^{-(x_0 - x_1)^2}
\end{equation}
and hence is white in time but colored and translationally invariant in space.
The well-posedness of such equations has been studied extensively
\cite{liurockner,concise} and one can construct globally defined solutions
in (for example) the affine space \cite[Prop. 5.2]{hamster2020}
\begin{equation}
\mathcal{U}_{H^1} = H^1(\R) + \frac{1}{2}\big( 1 - \tanh(\cdot) \big).
\end{equation}

The choice for this space is motivated by the fact that it contains
the well-known deterministic traveling wave solution
\begin{equation}
\label{p4:eq:int:def:det:trv:wave}
    U(x,t) = \Phi_0( x - c_0 t), \qquad \qquad \Phi_0(-\infty) = 1, \qquad \Phi_0(+\infty) = 0
\end{equation}
for \sref{p4:eq:mr:main:spde} with $\sigma = 0$. In \cite{hamster2017,hamster2020} we showed that this pair $(\Phi_0,c_0)$ can be generalized to a branch of so-called \textit{instantaneous stochastic waves} 
$(\Phi_{\sigma},c_{\sigma})$ for \sref{p4:eq:mr:main:spde} 
that - at onset - travel with velocity velocity $c_{\sigma}$ and 
feel only stochastic forcing. These waves can be shown to satisfy
\begin{equation}
     \norm{ \Phi_{\sigma} -\Phi_0}_{H^2}+\abs{c_{\sigma} - c_0} = \O (\sigma^2).
\end{equation}

 The key question is if one can understand the perturbations
\begin{align}
\label{p4:eq:int:perturbations}
 V(t) =  U(\cdot +\G(t),t) - \Phi_\s
\end{align}
from these profiles, using an appropriate phase shift $\Gamma$ (which we will define in \S5) to stochastically `freeze' the solution $U$. 
In particular, we are interested in the behavior of the stopping time
\begin{equation}
\label{p4:eq:mr:defStopTime}
    t_{\mathrm{st}}(\eta)
     = \inf \{
       t \geq 0:
         \norm{V(t)}_{L^2}^2 + \int_0^t e^{-\e(t-s)} \norm{V(s)}_{H^1}^2 \, ds > \eta
        \},
\end{equation}
which measures when $U$ exits an appropriate orbital $\eta$-neighborhood of the profile $\Phi_{\sigma}$. We invite the reader to think of $\varepsilon > 0$ as a small external parameter that is required for regularity purposes 
but kept fixed throughout the entire paper. 

Our main result states that this exit-time is (with probability exponentially close to one) exponentially long with respect to the parameter $1/\sigma$.
As such, it establishes the meta-stability of
the deterministic
traveling wave \sref{p4:eq:int:def:det:trv:wave} under
small stochastic forcing, significantly extending
our earlier results in \cite{hamster2017,hamster2020}.
We refer to the end of {\S}\ref{p4:sec:int} for 
a characterization of the class of systems for which our bound
is valid, together with comments concerning improvements and generalizations.

\begin{thm}
\label{p4:thm:mr}
Fix\footnote{We emphasize that the subsequent constants $\delta_{\eta}$, $\delta_{\sigma}$ and $\kappa$ all tend to zero as $\e \downarrow 0$. This makes sense, since the expression in \sref{p4:eq:mr:defStopTime} will naturally grow with $t$ in this limit, making it useless as a measure for stability.}
a sufficiently small constant
$\e > 0$. Then there exist
constants $\delta_{\eta} > 0$, 
$\delta_{\sigma}> 0$ and $0< \kappa <1$ so that the following holds true.
For  any 
$0< \eta < \delta_{\eta}$,
any $0 < \sigma < \delta_{\sigma}$
and any $U(0) \in \mathcal{U}_{H^1}$
that satisfies $\norm{U(0) - \Phi_{\sigma}}_{H^1}^2 < \kappa \eta$,
there exists a scalar stochastic process $\Gamma$ so that 
 \begin{equation}
   \label{p4:eq:mr:estimate:on:prob}
     P( t_{\mathrm{st}}(\eta) < T ) \leq 
     2 T \mathrm{exp}\big[- \frac{\kappa \eta}{\sigma(\sigma + \sqrt{\eta})}\big]
\end{equation}
holds for all $T \ge 2$.
\end{thm}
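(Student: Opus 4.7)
The plan is to combine the phase-tracking ansatz of \cite{hamster2017,hamster2020} with an exponential supermartingale argument, using generic-chaining bounds to control the stochastic convolution uniformly on exponentially long timescales.

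First, I would construct the phase-tracking process $\G$ as the solution of a scalar stochastic ODE that forces $V(t) = U(\cdot + \G(t),t) - \Phi_\s$ to lie in a codimension-one subspace $\H \subset H^1(\R)$ on which the linearization $\L_\s$ of the traveling-wave operator about $\Phi_\s$ is sectorial and has a spectral gap of size $\b > 0$; this amounts to removing the neutral translation eigenmode. Applying Itô to the translated profile produces an SPDE of the form
\begin{equation*}
dV = \big[\L_\s V + \mathcal{N}(V) + \s^2 R(V)\big]\,dt + \s B(V)\,dW^Q_t,
\end{equation*}
where $\mathcal{N}(V) = \O(\nrm{V}_{H^1}^2)$ collects the nonlinear terms, $R$ is the Itô correction arising from the stochastic change of frame, and $B$ is bounded and Lipschitz into the Hilbert--Schmidt operators on $L^2(\R)$ thanks to the cut-off $\chi$.

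Second, I would derive an energy estimate by applying Itô to the Lyapunov functional
\begin{equation*}
X_t = \nrm{V(t)}_{L^2}^2 + \int_0^t e^{-\e(t-s)}\nrm{V(s)}_{H^1}^2\,ds,
\end{equation*}
which is exactly the quantity tested in the definition of $t_{\mathrm{st}}(\eta)$. The coercivity estimate $\ip{V,\L_\s V}_{L^2} \le -\b\nrm{V}_{H^1}^2 + \e\nrm{V}_{L^2}^2$ on $\H$, together with the Lipschitz bounds on $f,g$ and the smallness assumption $X_t \le \eta$, should yield a pathwise bound of the form
\begin{equation*}
dX_t \le C_1\s^2\,dt + dM_t, \qquad d[M]_t \le C_2 \s^2\paren*{\s^2 + X_t}\,dt
\end{equation*}
on the event $\{t < t_{\mathrm{st}}(\eta)\}$. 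The presence of both an additive $\s^2$ noise floor (coming from the fact that $B(0) = g(\Phi_\s) \neq 0$) and a multiplicative $\s^2 X_t$ contribution is precisely the Bernstein-type structure that produces the ratio $\eta/(\s(\s+\sqrt\eta))$ in the exponent of \sref{p4:eq:mr:estimate:on:prob}. Combined with the initial condition $\nrm{U(0) - \Phi_\s}_{H^1}^2 < \k \eta$, which ensures $X_0 < \eta$, the exit problem for $X$ reduces to an upward-excursion problem for a controlled semimartingale. A Chernoff bound applied to $M$ on each unit subinterval $[n,n+1] \subset [0,T]$, with parameter proportional to $[\s(\s+\sqrt\eta)]^{-1}$, gives a single-interval bound $\exp(-\k\eta/[\s(\s+\sqrt\eta)])$, and a union bound across the at most $\lceil T\rceil$ intervals produces the prefactor $2T$.

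The hard part will be controlling $\sup_{s \le T}\nrm{Z(s)}_{H^1}$ for the stochastic convolution $Z(t) = \s \int_0^t S_\s(t-s) B(V(s))\,dW^Q_s$ uniformly on $T \le \exp(\k\eta/[\s(\s+\sqrt\eta)])$. A naive Burkholder--Davis--Gundy bound gives an $L^p$-moment with a $T^{1/2}$ prefactor, which immediately destroys the exponential timescale. The paper's main technical innovation -- a mild version of the BDG inequality combined with the generic chaining principle of Talagrand -- produces instead a logarithmic supremum bound of order $\s\sqrt{\log T}$, so that the factor $2T$ is absorbed by only a modest reduction in $\k$. Establishing this chaining bound in the critical $H^1$ regularity regime, where the covariance operator of $Z$ sits at the edge of being trace class, is the heart of the argument.
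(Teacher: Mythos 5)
Your proposal captures the main ingredients of the paper's argument (phase tracking, an exponential Chernoff bound for the exit probability, and generic chaining combined with a mild Burkholder--Davis--Gundy inequality for the stochastic convolution), but two steps as you describe them would fail.

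First, you assert that the hard part is controlling $\sup_{s\le T}\nrm{Z(s)}_{H^1}$ for the stochastic convolution $Z(t)=\sigma\int_0^t S(t-s)B(V(s))\,dW^Q_s$. That supremum is the wrong object: in the critical regularity regime at hand it is \emph{not} finite in general, for exactly the reason you flag (the covariance sits at the border of being trace class in $H^1$). The paper deliberately never bounds it. It instead bounds $\sup_{t}\nrm{\mathcal{E}_B(t)}_{L^2}$ together with the \emph{integrated, scalar} quantity
\begin{equation*}
\mathcal{I}^{\mathrm{s}}_B(t)=\int_0^te^{-\e(t-s)}\int_0^s\ip{S(s-s')V(s'),S(s-s')B(s')\,dW^Q_{s'}}_{H^1}\,ds,
\end{equation*}
obtained after applying a mild It\^o formula to $\nrm{V(s)}_{H^1}^2$ inside the time-weighted integral and using the identity \sref{p4:eq:supb:ib:splitip} to trade away the dangerous $H^1$-regularity. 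This is precisely why the stopping time \sref{p4:eq:mr:defStopTime} involves $\int_0^t e^{-\e(t-s)}\nrm{V(s)}_{H^1}^2\,ds$ rather than $\sup_s\nrm{V(s)}_{H^1}^2$. Your functional $X_t$ has the correct form, but the path from there must go through this integrated scalar convolution, not through a pointwise $H^1$ supremum of $Z$.

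Second, the interval-by-interval Chernoff bound with a union over $[n,n+1]\subset[0,T]$ is incomplete as stated: to obtain $\exp(-\kappa\eta/[\sigma(\sigma+\sqrt\eta)])$ on each block you need $X_n$ to be well below $\eta$ at the start of that block, and you give no re-entry or conditioning argument guaranteeing this (nor is the process Markovian in $X$, since $X_t$ carries a history integral). The paper avoids this entirely by proving a \emph{global} moment bound of the form
\begin{equation*}
E\Big[\sup_{0\le t\le t_{\mathrm{st}}}N(t)^p\Big]\le K\Big[\nrm{V(0)}_{L^2}^{2p}+\sigma^{2p}(p^p+\ln(T)^p)+\sigma^p\eta^{p/2}(p^{p/2}+\ln(T)^{p/2})\Big]
\end{equation*}
for all integers $p\ge 1$ (the $\ln(T)$ factors are exactly what the chaining buys), and then applies a single exponential Markov inequality $P(\sup N>\eta)\le e^{-\nu\eta}E\,e^{\nu\sup N}$; summing the resulting moment series with $\nu\sim[\sigma(\sigma+\sqrt\eta)]^{-1}$ produces a prefactor $T^{1/(2e)}$ absorbed into $2T$. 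If you insist on the block-wise route you would have to supply the re-entry estimate separately. Finally, you write ``applying It\^o to the Lyapunov functional $X_t$'' as if it were routine; in fact the phase-shifted equation contains $\kappa_\sigma(V)V''$-type terms that are only mild, and the paper needs a time transformation plus a mild It\^o formula to make any such computation legitimate.
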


We remark here that general `exit-problems' 
have been well-studied in finite-dimensional contexts \cite{day1990large,freidlin1998random},
chiefly based on Freidlin-Wentzell theory (see e.g. 
\cite[Ch. 5]{dembo2011large}). However,
due to technical challenges much less is known in infinite dimensions \cite{gautier2005uniform,berglund2013}. The recent paper by Salins and Spiliopoulous \cite{salins2019} discusses some of the main developments
in this area, which chiefly focus on SPDEs with gradient-independent noise posed on finite domains. 
The authors extensively discuss several key complications
that arise in infinite-dimensional settings,
such as the construction of controlled paths,
the regularity of the quasipotential and
the non-compactness of the domain of attraction.
Nevertheless,  they 
were able to establish tightness results  
that lead naturally to large deviation principles \cite{cerrai2004large}
by exploiting the fact that 
the associated semigroups are compact.
In this fashion, several of the key Freidlin-Wentzell results concerning the exit-time and exit-shape could be recovered.

In the context of traveling waves,
Bouard and Gautier \cite{de2008exit} showed that 
the classic soliton solutions to the KdV equation survive on timescales that are algebraically long with respect to the noise strength. In this analysis they are aided by the dispersive character of the system and explicit Lyapunov functionals and conserved quantities. In addition, MacLaurin and Bressloff
established bounds similar to
\sref{p4:eq:mr:estimate:on:prob}
for waves in a  neural field model that is posed on a ring and that does not feature spatial derivatives \cite{maclaurin2020}. A key feature in their approach is an immediate contractivity condition that is typically hard to check (or enforce) in general models; see \cite[\S1]{hamster2020} for a detailed discussion.

The special features described above do not apply in the current setting and we take a completely different approach. In particular, we focus directly
on the underlying stochastic convolution integrals and obtain
logarithmic growth bounds, which translate readily into exponentially long exit-times via an exponential Markov inequality. The main issues that we have to face
concern the regularity of these stochastic convolutions, which require
specialized tools and a careful choice of norms. For example, we need
to exploit the fact that our semigroup admits an $H^\infty$-calculus to keep the $L^2$-bound of $V(t)$ under control. In addition, our problem 
only provides integrated rather than pointwise control on the $H^1$-norm of $V(t)$, which forces us to include the non-standard integral in \sref{p4:eq:mr:defStopTime}. Our ability to control the $H^1$-norm in this fashion is a clear distinguishing feature of our approach and offers several important benefits that we discuss in the sequel.

Naturally, we intend to continue work to streamline our techniques with 
the more classic Freidlin-Wentzell approach. Indeed, we view this paper merely as a proof-of-concept to show that wave-tracking over exponentially long time-scales is possible for a broad class of stochastic reaction-diffusion PDEs. To enhance the readability, we have stated our results for the simple scalar system \sref{p4:eq:mr:main:spde} and refrained from 
sharpening 
the bound \sref{p4:eq:mr:estimate:on:prob}
to the fullest extent attainable by our approach.

We do wish to point out that 
at some point in any typical stability analysis, the stochastic integrals need to be addressed. For example, in \cite{salins2019} factorization is used
to obtain preliminary bounds on 
the pertinent convolution integrals, but
this is not possible in our case as we explain below. Since sharp bounds on integrals play such an important role in deterministic stability theory, we feel that estimation techniques similar to those that we develop here could play a strategic role when combining the fields of probability and pattern formation. Even outside of this specific scope, the ideas 
and references in {\S}\ref{p4:sec:prlm}-\ref{p4:sec:supb} could be useful for researchers
interested in quantitative procedures to characterize growth rates of stochastic processes.

\paragraph{Stochastic waves}
The impact of noise on pattern formation is an important topic that has attracted
significant interest from the applied community
\cite{Armero1996,garciaspatiallyextended,schimansky1991,vinals1991numerical,shardlow,hallatschek},
but for which little rigorous mathematical theory is available \cite{Bloemker,kuske2017,mueller1995,hausenblas2020theoretical}. 
The Nagumo equation is a natural starting point for such investigations since it has served in the past as a prototypical system to analyze the interaction between two competing stable states in spatially extended domains \cite{Aronson1975nonlinear,AW78}. The deterministic traveling waves 
\sref{p4:eq:int:def:det:trv:wave} represent a primary invasion mechanism by which the favorable state can spread throughout the entire domain. They are robust under perturbations, which allows them to be used as building blocks to understand the global behavior of \sref{p4:eq:mr:main:spde} in one \cite{fife1977,volpert1994traveling,kapitula}
but also higher spatial dimensions \cite{bhm,kap1997,matano2019asymptotic}.

The behavior of these invasion waves under several types of stochastic forcing has been studied by various authors using a range of different techniques.
The consensus emerging from a number of formal computations
for \sref{p4:eq:mr:main:spde}
is that
- to leading order in $\sigma$ - the phase-shift of the wave follows a Brownian motion with a variance
that can be expressed in closed form \cite{Bressloff,garciaspatiallyextended,cartwright2019}.
Various rigorous approaches have been pursued
over the past five years that can
successfully explain this diffusive behavior on short time scales
\cite{stannatnag,stannatbistable,stannatkruger,Inglis};
see e.g \cite{kuehnreview} for a very recent overview
and the introductions of \cite{hamster2017,hamster2020} for a
detailed technical discussion.  Several of these techniques
have been extended to stochastic neural field equations
\cite{langstannat2016l2,bressloff2015nonlinear,maclaurin2020}
and (very recently) to the FitzHugh-Nagumo system \cite{gnann}.

In a recent series of papers \cite{hamster2017,hamster2018uneq,hamster2020},
we pioneered a novel `stochastic freezing' approach to rigorously analyze the behavior of traveling fronts and pulses to a large class of reaction-diffusion equations (RDEs) - which includes \sref{p4:eq:mr:main:spde} and the (fully diffusive) FitzHugh-Nagumo system.  In essence, we developed a stochastic version of the freezing approach introduced by
Beyn \cite{beyn2004freezing}, which allows us to adopt the spirit behind the modern machinery for deterministic stability
issues initiated by Howard and Zumbrun
\cite{zumbrunhoward}. The power of this approach is that it leads naturally
to \textit{long-term} predictions concerning both the speed and the shape of the 
stochastic wave that can be computed to arbitrary order in $\sigma$. We demonstrated the accuracy of these novel predictions in \cite{hamster2020} by performing a series of numerical experiments. As a consequence,
we now have a quantitative explanation
for the wave-steepening and speed-reduction phenomena that
were illustrated numerically in \cite{lord2012}
and - in a special case - derived formally in \cite{cartwright2019}
using a collective coordinate approach. 

\paragraph{Regularity issues}
We do point out that the long-term stability results in \cite{maclaurin2020} discussed above
are based on an alternative phase-tracking mechanism
proposed in \cite{Inglis}, which predates our work. However,
the key novel feature of the approach in \cite{hamster2017,hamster2018uneq,hamster2020} is that the perturbation
$V$ in the decomposition \sref{p4:eq:int:perturbations} is measured
in the same reference frame as the frozen profile $\Phi_{\sigma}$. This allows
the delicate interaction between the speed and shape of the wave to be untangled,
but also presents several fundamental complications that need to be carefully addressed. The most important of these is that the stochastic phase shift
causes extra diffusive correction terms for $V$ that are not seen in the deterministic context, together with a multiplicative noise term 
that involves the derivative of $V$. Unlike any of the previous approaches in this area, we hence need to keep the $H^1$-norm of $V(t)$ under control.

To be more specific, an essential step in our stability proofs is to obtain
bounds for the expression
\begin{align}
\label{p4:eq:int:exL2}
    E\sup_{0\leq t\leq T}\nrm{\int_0^tS(t-s)B\big(V(s), \p_xV(s)\big)dW^Q_s}^2_{L^2},
\end{align}
together with its integrated $H^1$-counterpart
\begin{align}
\label{p4:eq:int:exH1}
        E\sup_{0\leq t\leq T}\int_0^te^{-\e(t-s)}\nrm{\int_0^sS(s-s')
         B\big(V(s'), \p_xV(s')\big)dW^Q_{s'}}^2_{H^1} \,ds.
\end{align}
Here, $B\big(V(s), \p_xV(s)\big)$ represents a suitable Hilbert-Schmidt operator and $S$ denotes the semigroup associated to the linearization
of \sref{p4:eq:mr:main:spde} around the deterministic traveling wave
\sref{p4:eq:int:def:det:trv:wave}. The precise
expression for $B$ can be found in \S5,
where we recap the computations from
our previous papers
\cite{hamster2017,hamster2020}; see \cite[\S2.2]{hamster2020} for an extensive informal explanation. In  \cite{hamster2017,hamster2018uneq}, we used
the mild Burkholder-Davis-Gundy (BDG) inequality 
obtained by Veraar \cite{veraar2011note} to control \sref{p4:eq:int:exL2},
but the resulting bounds are unfortunately not optimal. As such,
they restricted the validity range of our rigorous results to timescales of order $T \sim \sigma^{-2}$.

This shortfall is repaired by the bound in Theorem \ref{p4:thm:mr}, which 
confirms that our phase-tracking can be maintained over the exponentially long timescales observed in the numerical results 
from \cite{hamster2020}.
We emphasize that our improved bound also covers regimes where the stochastic phase $\Gamma$ is expected to be very far away from its deterministic counterpart. This provides a solid theoretical underpinning to the formal
predictions that we made in \cite{hamster2020} concerning
the stochastic corrections to  \sref{p4:eq:int:def:det:trv:wave}.
In addition, it allows us to obtain stochastic meta-stability results under the same spectral conditions required for deterministic stability.

To understand the issues that are involved, it is highly instructive to consider the scalar Ornstein-Uhlenbeck process 
\begin{align}
\label{p4:eq:int:def:OU}
X(t)=\int_0^te^{-(t-s)}d\beta_s,
\end{align}
which here starts at $X(0) =0$ and 
is driven by a standard Brownian motion $\beta_t$. Since $B(0,0) \neq 0$,
the behavior of $X$ is highly comparable to that of $V$ at lowest order in $\s$. 
Indeed, the deterministic dynamics pulls $X$ towards zero at an exponential rate, but the stochastic forcing does not vanish there. Applying
the mild Burkholder-Davis-Gundy inequality to \sref{p4:eq:int:def:OU}
results in the bound
\begin{equation}
    E \sup_{0 \leq t \leq T} \abs{X(t)}^2 \leq K  \int_0^T 1 \, ds = KT.
\end{equation}
This hence fails to reproduce the well-known fact that
this expectation behaves as $\O\big( \ln(T)\big)$ for large $T$,
which was originally established by examining crossing numbers \cite{pickands1969}
or analyzing explicit probability distributions \cite{alili2005representations}.
Fortunately, a more general abstract approach has been developed in recent years.

\paragraph{Chaining}
A powerful modern tool to derive supremum bounds for stochastic processes is commonly referred
to as `generic chaining'; see \cite{talagrand} for an accessible introduction.\footnote{This unpublished chapter by Pollard could also be useful: \url{http://www.stat.yale.edu/~pollard/Books/Mini/Chaining.pdf}}
Based on contributions from a range of authors, including Kolmogorov, Dudley, Fernique and Talagrand, it uses information on the increments of a stochastic
process to establish long-term supremum bounds. For instance, exploiting the fact
that the Ornstein-Uhlenbeck process \sref{p4:eq:int:def:OU} is centered and Gaussian, one can obtain the tail bound
\begin{align}
\label{p4:eq:int:tailb}
    P(|X(t)-X(s)|> \vartheta)\leq 2 e^{-\frac{\vartheta^2}{2d(t,s)^2}}
\end{align}
characterized by the metric $d(t,s) = \sqrt{E \big(X(t)-X(s)\big)^2}$.
An explicit computation yields the bound
\begin{equation}
\label{p4:eq:int:comp:d:ou}
\begin{array}{lcl}
  d(t,s)^2 
    &=& 
\frac{1}{2}\big( 2-e^{-2t}-e^{-2s}-2(e^{-|t-s|}-e^{-(t+s)}) \big)
\\[0.2cm]
&\leq &1-e^{-|t-s|}
\\[0.2cm]
& \leq & \min\{ \abs{t - s} , 1 \}.
\end{array}    
\end{equation}
This shows that the covering number $N(T,d,\nu)$ - which measures the
minimum number of intervals of length $\nu$ or less in the metric $d$ required to cover $[0,T]$ - can be bounded by $T/\nu^2$ for $\nu \in (0,1]$ and by 1 for $\nu \geq 1$. The main result in \cite{talagrand} 
- see Theorem \ref{p4:thm:supb:talagrand} below - now provides the Dudley bound
\begin{equation}
    \sup_{t\in[0,T]}X_t
    \sim \int_0^\infty\sqrt{\ln(N(T,d,\nu))}d\nu
    \leq
   \int_0^1\sqrt{\ln\left(T/\nu^2\right)}d\nu
   \sim \sqrt{ \ln(T)},
\end{equation}
which captures the desired logarithmic behavior
in a relatively straightforward manner.

Our main contribution in this paper is that we extend this technique to provide
similar sharp bounds for the stochastic integrals
\sref{p4:eq:int:exL2} and \sref{p4:eq:int:exH1}. 
On account of the regularity
issues that are involved, this is a surprisingly delicate task.
In fact, we are not aware of any related results in this direction
besides the factorization method developed by Da Prato, Kwapie\'n and Zabczyk \cite{dapratoregularity}, which typically only provides polynomial 
bounds in $T$. Let us remark that it was not immediately clear
to us how this factorization technique should be applied in the present setting, 
because it introduces extra singularities into integrals that cannot 
be readily accommodated in our critical regularity regime.

\paragraph{Obstructions}
In order to illustrate the key complications, let us consider the
$L^2$-valued process
\begin{align}
\label{p4:eq:int:infdimou}
 Y(t)=\int_0^tS(t-s)BdW^Q_s ,
\end{align}
which can be seen as an infinite-dimensional version of 
\sref{p4:eq:int:def:OU}. Here $B$ is an appropriate constant Hilbert-Schmidt operator, which can be used to define the covariance operator
\begin{align}
    Q_\infty=\lim_{t\to\infty}\int_0^tS(t-s)BQB^*S^*(t-s)ds.
\end{align}
The analogue of the bound \sref{p4:eq:int:comp:d:ou}  
is now given by\footnote{This computation can be made rigorous using \cite[\S5]{hairernotes}.} 
\begin{align}
\label{p4:eq:int:def:d:sq:inf:ou}
    d(t,s)^2=E\nrm{Y(t)-Y(s)}_{L^2}^2
    \leq 2\tr\big((I-S(t-s))Q_\infty\big),
\end{align}
but this time there is no $\alpha > 0$ for which one can extract a term
of the form $|t-s|^\a$ from the difference $S(t-s) - I$. In principle this can be repaired by `borrowing' some smoothness from $B$, but in our case this would again lead to unintegrable singularities. 

In order to resolve this, it is crucial to combine the strong points of both the chaining technique and the mild Burkholder-Davis-Gundy inequality. Indeed, 
the former works well in the regime where $\abs{t - s} \geq 1$
in \sref{p4:eq:int:exL2}, since here the decay and smoothening properties of the semigroup can both be put to excellent use. On the other hand,
for $\abs{t-s} \leq 1$, the $H^\infty$-calculus underlying the mild Burkholder-Davis-Gundy inequality can resolve the critical regularity issues
associated to supremum bounds without causing too much growth.
The main issue is to set up an appropriate framework that allows this splitting
to be achieved.

The second fundamental complication is that the integrands in 
\sref{p4:eq:int:exL2} and \sref{p4:eq:int:exH1} are time-dependent, which means
that - in contrast to \sref{p4:eq:int:infdimou} - the stochastic integrals are not  Gaussian. In this case, one must construct a metric such that a corresponding tail bound like \sref{p4:eq:int:tailb} can be derived from scratch. Effectively, this requires us to control \textit{all} the moments of the increments of \sref{p4:eq:int:exL2}. This is made possible by an effective use of stopping times in combination with a mild It{\^o} formula.

\paragraph{Scope and outlook}

In order to make the arguments in this paper as clear and concise as possible, we chose to restrict our attention to the single specific problem \sref{p4:eq:mr:main:spde}. However, we emphasize that our arguments transfer
immediately - almost verbatim - to the general class of (multi-component) problems considered in \cite{hamster2017} and \cite{hamster2020}, with the single
restriction that all diffusion coefficients must be equal (condition (hA) in \cite{hamster2017}). This latter restriction can be removed
by applying the spirit of \cite{hamster2018uneq}, but this requires
more complicated machinery that we will describe in an extensive
forthcoming companion paper. There we will also address the long-term validity of 
the perturbation results from \cite{hamster2020}.
In addition, we show that the bound
\sref{p4:eq:mr:estimate:on:prob}
can be improved by eliminating the $\sqrt{\eta}$ term, which arises here as a consequence of a shortcut that we take to estimate \sref{p4:eq:int:exH1}.

\paragraph{Organization}
We start in \S\ref{p4:sec:prlm} by introducing some basic probabilistic and deterministic concepts. The heart of this paper is contained in \S\ref{p4:sec:supb}, where we provide logarithmic bounds for the stochastic integrals \sref{p4:eq:int:exL2}
and \sref{p4:eq:int:exH1}. Several supremum bounds for deterministic integrals
are provided in \S\ref{p4:sec:supbd}, which allow for a streamlined
proof of our main theorem in \S\ref{p4:sec:nls}.

\paragraph*{Acknowledgments}
HJH acknowledges support from the Netherlands Organization for Scientific Research (NWO) (grant 639.032.612). Both authors wish to thank two anonymous referees for helpful suggestions, which helped strengthen the bound
\sref{p4:eq:mr:estimate:on:prob} besides improving
the readability of the paper.

\section{Preliminaries}
\label{p4:sec:prlm}

In this section we collect several useful preliminary results that will streamline our arguments. We start in {\S}\ref{p4:sec:prlm:det}
by recalling well-known facts concerning the linearization of the Nagumo PDE around its traveling wave. We subsequently consider the relation between tail bounds and moment estimates for scalar stochastic processes
in {\S}\ref{p4:sec:prlm:mom:tail}. Finally, in {\S}\ref{p4:sec:prlm:sup} 
we formulate the key technical tools that will allow us to apply the chaining principle to stochastic convolutions
in the critical regularity regime.

\subsection{Semigroup bounds}
\label{p4:sec:prlm:det}
It is well-known that the Nagumo PDE \sref{p4:eq:mr:main:spde}
with $\sigma = 0$ admits a traveling front solution 
$U(x,t) = \Phi_0(x - c_0 t)$ that necessarily satisfies
the traveling wave ODE
\begin{equation}\label{p4:eq:MR:TWODE}
    \rho \Phi_0''+c_0 \Phi_0' + f(\Phi_0)=0,
    \qquad \qquad
    \Phi_0(-\infty) = 1, \qquad \Phi_0(+\infty) = 0.
  \end{equation}
  The associated linear operators
  \begin{equation}
   \label{p4:eq:mr:def:l:tw:and:adj}
   \mathcal{L}_{\mathrm{tw}} v =
      \rho v''+c_0 v'  + Df\left(\Phi_0\right) v,
      \qquad \qquad
      \mathcal{L}^*_{\mathrm{tw}} w 
      =
      \rho w''-c_0 w'  + Df\left(\Phi_0\right) w,
  \end{equation}
  which we view as maps from $H^2(\R)$ into $L^2(\R)$,
  both admit a simple eigenvalue at $\lambda  =0$ and have no other spectrum in the half-plane $\{\Re \lambda \geq -2\beta\} \subset \mathbb{C}$
  for some $ \beta > 0$. 
    Writing $P_{\mathrm{tw}}$ for the spectral projection onto this neutral eigenvalue for $\mathcal{L}_{\mathrm{tw}}$, we can obtain the identifications
    \begin{equation}
        \mathrm{Ker}(\mathcal{L}_{\mathrm{tw}} ) = \mathrm{span}\{ \Phi_0' \},
        \qquad \qquad
        \mathrm{Ker}(\mathcal{L}_{\mathrm{tw}}^* ) = \mathrm{span}\{ \psi_{\mathrm{tw}} \},
        \qquad \qquad
        P_{\mathrm{tw}} v = \langle v, \psi_{\mathrm{tw}} \rangle_{L^2} \Phi_0'
    \end{equation}
    by writing\footnote{Note that $\psi_\mathrm{tw}$ is in $L^2$, which can be shown by a direct computation or by using Sturm-Liouville theory \cite[Thm. 2.3.3]{kapitula}. If one wishes to enter the monostable regime by choosing $a<0$, then $\psi_\mathrm{tw}$ becomes unbounded, which 
    is typically accommodated by using weighted spaces. We stress that the explicit formula for $\psi_{\mathrm{tw}}$ is not used anywhere in this paper.}
    $\psi_{\mathrm{tw}}(\xi) = \kappa e^{-\frac{c_0 \xi}{\rho}}\Phi_0'(\xi)$ for some $\kappa$
    that we fix by the requirement $\langle \Phi_0', \psi_{\mathrm{tw}}   \rangle_{L^2} = 1$.
    
    In fact, the operator $\mathcal{L}_{\mathrm{tw}}$ is sectorial and hence generates an analytic semigroup $S(t) = e^{\mathcal{L}_{\mathrm{tw}} t}$;
    see \cite[Prop. 4.1.4]{lorenzi2004analytic} and \cite[Prop. 6.3.vi]{hamster2017}.
    Upon introducing the notation
    \begin{align}
\begin{split}
\label{p4:eq:prlm:def:j:tw}
\mathcal{J}_{\mathrm{tw}}(t)[v,w]=&\,
\ip{S(t) v, S(t)w}_{L^2}
+\frac{1}{2\rho}\ip{S(t) v,   (\mathcal{L}_{\mathrm{tw}}-\rho\p_{xx})S(t) w}_{L^2}
\\
    &
    +\frac{1}{2\rho}\ip{S(t) v,
    (\mathcal{L}^*_{\mathrm{tw}}-\rho\p_{xx})S(t) w}_{L^2},
\end{split}
\end{align}
a short computation (see \cite[Lem. 9.12]{hamster2017}) shows that
\begin{align}
\label{p4:eq:supb:ib:splitip}
\begin{split}
    \langle S(t) v,
    S(t) w \rangle_{H^1}
     = & \, \mathcal{J}_{\mathrm{tw}}(t)[v,w]
    -\frac{1}{2\rho} \frac{d}{d t} 
       \langle S(t) v,
    S(t) w \rangle_{L^2}
\end{split}
\end{align}
holds for all $t > 0$ 
and $v,w\in L^2$.
This identity allows the regularity issues that arise in
\S\ref{p4:sec:supb} and \S\ref{p4:sec:supbd} to be resolved.

\begin{lem}
\label{p4:lem:prml:semigroup}
Writing $\Pi = I - P_{\mathrm{tw}}$,
there exists a constant $M \geq 1$
so that
for every $t > 0$ we have the bounds
\begin{equation}
\label{p4:eq:prlm:semigroup:bnds:i}
\begin{array}{lcl}
\nrm{S(t)}_{\L(L^2,L^2)}&\leq& M,
  \\[0.2cm]
\nrm{S(t)\Pi}_{\L(L^2,L^2)}&\leq& M e^{-\b t},
  \\[0.2cm]
\nrm{S(t)\Pi}_{\L(L^2,H^1)}&\leq& M t^{-\frac{1}{2}}e^{-\b t}  ,
\\[0.2cm]
\norm{[\mathcal{L}_\mathrm{tw} - \rho \partial_{\xi\xi}] S(t)\Pi}_{\L(L^2,L^2) }
  & \leq & M t^{-\frac{1}{2}}e^{-\b t},
\\[0.2cm]
\norm{[\mathcal{L}^*_\mathrm{tw} - \rho \partial_{\xi\xi}] S(t)\Pi }_{\L(L^2,L^2) }
  & \leq & M t^{-\frac{1}{2}}e^{-\b t},
\\[0.2cm]
\norm{(S(t) - I)S(1)}_{\mathcal{L}(L^2,L^2)} & \leq & M \abs{t}.
\end{array}
\end{equation}
In particular, for any $t > 0$ and $v,w \in L^2$ we obtain the estimate
\begin{equation}
\label{p4:eq:supb:ib:Bbound}
    \abs{ \mathcal{J}_{\mathrm{tw}}(t)[\Pi v,\Pi w] }
    \leq M^2e^{-2\beta t}\left(1+\rho^{-1} t^{-1/2}\right)\norm{v}_{L^2} \norm{w}_{L^2}.
\end{equation}
\end{lem}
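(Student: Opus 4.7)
The plan is to derive all six bounds from the sectoriality of $\mathcal{L}_{\mathrm{tw}}$ combined with the given spectral information, and then piece them together to obtain the final estimate on $\mathcal{J}_{\mathrm{tw}}$. Since $\Pi$ commutes with $S(t)$ and the restriction of $\mathcal{L}_{\mathrm{tw}}$ to $\mathrm{Ran}(\Pi)$ has spectrum contained in $\{\Re\lambda \le -2\beta\}$, the standard contour-integral representation of an analytic semigroup yields $\|S(t)\Pi\|_{\mathcal{L}(L^2,L^2)} \le M e^{-\beta t}$, which is the second bound. The first bound then follows by writing $S(t) = S(t)\Pi + S(t)P_{\mathrm{tw}}$ and observing that $S(t)P_{\mathrm{tw}} = P_{\mathrm{tw}}$, since $P_{\mathrm{tw}}$ projects onto $\mathrm{Ker}(\mathcal{L}_{\mathrm{tw}})$; enlarging $M$ to absorb $\|P_{\mathrm{tw}}\|$ closes this.

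For the smoothing bound $\|S(t)\Pi\|_{\mathcal{L}(L^2,H^1)} \le M t^{-1/2} e^{-\beta t}$, I would combine the standard fractional-power estimate $\|(-\mathcal{L}_{\mathrm{tw}})^{1/2} S(t)\Pi\|_{\mathcal{L}(L^2,L^2)} \le M t^{-1/2} e^{-\beta t}$ with the fact that the graph norm of $(-\mathcal{L}_{\mathrm{tw}})^{1/2}$ is equivalent to the $H^1$-norm, which follows from uniform ellipticity and the bounded, lower-order character of the drift and potential terms in $\mathcal{L}_{\mathrm{tw}}$. The fourth and fifth bounds then reduce to the third via the algebraic identities
\begin{equation*}
\mathcal{L}_{\mathrm{tw}} - \rho \partial_{xx} = c_0 \partial_x + Df(\Phi_0), \qquad \mathcal{L}^*_{\mathrm{tw}} - \rho \partial_{xx} = -c_0 \partial_x + Df(\Phi_0),
\end{equation*}
both of which map $H^1$ continuously into $L^2$ since $Df(\Phi_0)$ is a bounded $L^\infty$-multiplier. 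For the sixth bound, I would integrate the semigroup identity to get
\begin{equation*}
\big(S(t)-I\big)S(1) w = \int_0^t \mathcal{L}_{\mathrm{tw}} S(s+1) w \, ds
\end{equation*}
and note that $\|\mathcal{L}_{\mathrm{tw}} S(s+1)\|_{\mathcal{L}(L^2,L^2)} = \|\mathcal{L}_{\mathrm{tw}} S(s+1) \Pi\|_{\mathcal{L}(L^2,L^2)}$ (using $\mathcal{L}_{\mathrm{tw}} P_{\mathrm{tw}} = 0$) is uniformly bounded for $s \geq 0$ by the analytic smoothing at time $s+1 \geq 1$, yielding a linear-in-$t$ bound.

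For the final estimate on $\mathcal{J}_{\mathrm{tw}}(t)[\Pi v, \Pi w]$, I would apply Cauchy-Schwarz to each of the three summands in \sref{p4:eq:prlm:def:j:tw}. The first contributes $M^2 e^{-2\beta t}\|v\|_{L^2}\|w\|_{L^2}$ via the second bound used twice. The remaining two pair the second bound on one factor with the fourth or fifth bound on the other, each contributing $\tfrac{1}{2\rho} M^2 t^{-1/2} e^{-2\beta t}\|v\|_{L^2}\|w\|_{L^2}$; summing gives the claimed inequality. The only genuine obstacle here is not conceptual but bookkeeping: one must verify that a single constant $M$ can be chosen uniformly across all six bounds, which boils down to ensuring that the lower-order drift and potential in $\mathcal{L}_{\mathrm{tw}}$ do not distort the sectoriality angle or the $H^1$-equivalence on $\mathrm{Ran}(\Pi)$. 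Both are routine consequences of the ellipticity of $\rho \partial_{xx}$ and the boundedness of $Df(\Phi_0)$.
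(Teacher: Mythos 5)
Your proposal is correct and takes essentially the same route as the paper, which disposes of the six operator estimates by citing \cite[Prop.~5.2.1]{lorenzi2004analytic} and of the final inequality by a direct inspection of \sref{p4:eq:prlm:def:j:tw}. You have simply reconstructed the standard sectorial/analytic-semigroup argument (spectral gap on $\mathrm{Ran}(\Pi)$, fractional-power smoothing, and $S(t)P_{\mathrm{tw}}=P_{\mathrm{tw}}$) that underlies the cited proposition, and your Cauchy--Schwarz bookkeeping reproduces the stated bound on $\mathcal{J}_{\mathrm{tw}}$ exactly.
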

\begin{proof}
The bounds \sref{p4:eq:prlm:semigroup:bnds:i}
can be deduced from \cite[Prop. 5.2.1]{lorenzi2004analytic},
while \sref{p4:eq:supb:ib:Bbound} follows readily by inspecting
\sref{p4:eq:prlm:def:j:tw}.
\end{proof}

\subsection{Moment estimates and tail bounds} 
\label{p4:sec:prlm:mom:tail}

We briefly review here the technique that we use to pass back and forth between moment estimates and tail probabilities. The former are easier to estimate, but the latter are better suited for handling maxima. Our computations
are based heavily on \cite[Lem. 2.2.3]{talagrand}
and \cite{veraar2011note}.

\begin{lem}
\label{p4:lem:prml:veraar}
Consider a random variable $Z \geq 0$ and suppose that there exists a $\Theta>0$
so that the bound
\begin{equation}
\label{p4:eq:prml:moment}
    E [Z^{2p}] \leq p^p \Theta^{2p}
\end{equation}
holds for all integers $p  \geq 1$.
Then for every $\vartheta > 0$ we have the estimate
\begin{equation}
\label{p4:eq:prlm:tail:bnd}
    P( Z > \vartheta ) \le
    2 \, \mathrm{exp}\left[ - \frac{\vartheta^2}{2 e \Theta^2}\right] .
\end{equation}

\end{lem}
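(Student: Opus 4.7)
The plan is to lift the polynomial moment bound \sref{p4:eq:prml:moment} to an exponential moment estimate for $Z^2$, and then apply a Chernoff-style Markov inequality. This is the standard route from sub-Gaussian moments to Gaussian-type tail bounds.

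First, I would expand $\exp(\lambda Z^2) = \sum_{p \geq 0} \lambda^p Z^{2p}/p!$ and take expectations term-by-term, which is justified by monotone convergence since every term is non-negative. The hypothesis gives $E[Z^{2p}] \leq p^p \Theta^{2p}$ for $p \geq 1$, while the elementary Stirling lower bound $p! \geq (p/e)^p$ yields $p^p/p! \leq e^p$. Combined with the trivial $p=0$ contribution $E[Z^0]=1$, this produces
\begin{equation*}
E\bigl[\exp(\lambda Z^2)\bigr] \;\leq\; \sum_{p \geq 0} (e \lambda \Theta^2)^p \;=\; \frac{1}{1 - e \lambda \Theta^2}
\end{equation*}
for any $0 < \lambda < 1/(e\Theta^2)$.

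Second, I would feed this into the Chernoff estimate
\begin{equation*}
P(Z > \vartheta) \;=\; P\bigl(\exp(\lambda Z^2) > \exp(\lambda \vartheta^2)\bigr) \;\leq\; \exp(-\lambda \vartheta^2)\, E\bigl[\exp(\lambda Z^2)\bigr] \;\leq\; \frac{\exp(-\lambda \vartheta^2)}{1 - e \lambda \Theta^2}.
\end{equation*}
Finally, I would balance the two factors by setting $\lambda = 1/(2e\Theta^2)$, which makes $1 - e\lambda\Theta^2 = 1/2$ and $-\lambda\vartheta^2 = -\vartheta^2/(2e\Theta^2)$, yielding \sref{p4:eq:prlm:tail:bnd} at once.

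The only mildly delicate point — rather than a real obstacle — is confirming the Stirling-type bound $p! \geq (p/e)^p$, which follows immediately from $e^p = \sum_{k \geq 0} p^k/k! \geq p^p/p!$. The factor $2$ in front of the exponential and the extra $e$ in the denominator of \sref{p4:eq:prlm:tail:bnd} represent the natural loss incurred by this Chernoff optimization relative to the heuristic real-valued optimum $\exp(-\vartheta^2/(e\Theta^2))$ that one would extract by directly minimizing $(p\Theta^2/\vartheta^2)^p$ over real $p$; an alternative proof would instead apply Markov's inequality to $Z^{2p}$ for $p = \lfloor \vartheta^2/(e\Theta^2) \rfloor$, but bookkeeping for the integer rounding is more cumbersome than the geometric-series approach above.
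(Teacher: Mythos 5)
Your proof is correct and follows essentially the same route as the paper: both bound the exponential moment $E[\exp(\nu Z^2)]$ via the series expansion, the elementary bound $p! \geq (p/e)^p$, and the resulting geometric series, then apply the Chernoff--Markov inequality and optimize at $\nu = (2e\Theta^2)^{-1}$. The only difference is the order of presentation (you bound the moment generating function first, the paper applies Markov first), and your explicit handling of the $p=0$ term is a slight tidying of a point the paper glosses over.
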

\begin{proof}
For any $\nu > 0$ a formal computation establishes the Chernoff bound
\begin{equation}
\begin{array}{lcl}
     P(Z>\vartheta)
     &=&P(e^{\nu Z^2}>e^{\nu \vartheta^2})
     \\[0.2cm]
     &\leq &
     e^{-\nu \vartheta^2}
     E \left[e^{\nu Z^2}\right]
    \\[0.2cm]
    & \leq & 
    e^{-\nu \vartheta^2}
       E\left[\sum_{p=0}^\infty\frac{\nu^p}{p!}Z^{2p}\right]
       \\[0.2cm]
     &\leq &e^{-\nu \vartheta^2}
     \sum_{p=0}^\infty \frac{\nu^p}{p!}
     p^p \Theta^{2p}  .
\end{array}
\end{equation}
Using $p! \geq p^{p} e^{-p}$
we obtain
\begin{equation}
\begin{array}{lcl}
     P( Z>\vartheta)
& \leq &
e^{-\nu \vartheta^2}
     \sum_{p=0}^\infty 
     \nu^p
     e^{p}\Theta^{2p}  ,
\\[0.2cm]
\end{array}
\end{equation}
which leads to  \sref{p4:eq:prlm:tail:bnd}
by choosing $\nu = (2e \Theta^2)^{-1}$.
\end{proof}

\begin{lem}
\label{p4:lem:prml:Tal}
Fix two constants $A \ge 2$ and $\Theta > 0$ and consider a random variable $Z \ge 0$ that satisfies the estimate
\begin{equation}
    P(Z > \vartheta) \le
    2 A \mathrm{exp}\big[ - \frac{\vartheta^2}{2e \Theta^2} \big]
\end{equation}
for all $\vartheta > 0$. Then for any $p \ge 1$
we have the moment bound
\begin{equation}
    E [ Z^{2p} ]
    \le \Big(p^p + \ln(A)^p \big) (8e\Theta^{2})^p
\end{equation}
\end{lem}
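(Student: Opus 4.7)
The plan is to apply the layer-cake formula $E[Z^{2p}] = \int_0^\infty 2p\vartheta^{2p-1} P(Z>\vartheta)\,d\vartheta$ and split the integration at a threshold $\vartheta_0$ that will be calibrated against both $p$ and $\ln A$. On $[0,\vartheta_0]$ I would use the trivial bound $P(Z>\vartheta)\leq 1$, producing the bulk contribution $\vartheta_0^{2p}$; on $[\vartheta_0,\infty)$ the subgaussian hypothesis is substituted directly. After the change of variable $u = \vartheta^2/(2e\Theta^2)$, the tail piece simplifies to $2Ap(2e\Theta^2)^p\Gamma(p,u_0)$, where $u_0 = \vartheta_0^2/(2e\Theta^2)$ and $\Gamma(p,\cdot)$ denotes the upper incomplete gamma.

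The natural scale on which to set $u_0$ is $p+\ln A$, so that the product $A e^{-u_0}$ is brought under control. I would take $\vartheta_0^2 = 4e\Theta^2(p+\ln A)$, giving $u_0 = 2(p+\ln A)$ and $e^{-u_0} = e^{-2p}A^{-2}$. The elementary convexity inequality $(x+y)^p \leq 2^{p-1}(x^p+y^p)$ then yields
\begin{equation*}
\vartheta_0^{2p} = (4e\Theta^2)^p(p+\ln A)^p \;\leq\; \tfrac{1}{2}(8e\Theta^2)^p\bigl(p^p + \ln(A)^p\bigr),
\end{equation*}
which is precisely half of the desired right-hand side.

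It remains to show that the tail contribution can be absorbed into the other half. For this I would rewrite $\Gamma(p,u_0) = e^{-u_0}\int_0^\infty (u_0+v)^{p-1} e^{-v}\,dv$, apply the convexity inequality once more, and use the crude bound $\Gamma(p) \leq p^p$, to deduce $\Gamma(p,u_0) \leq e^{-u_0}\cdot 2^{p-1}\bigl(u_0^{p-1} + p^p\bigr)$. The factor $e^{-u_0} = e^{-2p}/A^2$ then knocks out the leading $A$ from the tail prefactor, and the elementary estimate $u_0^{p-1} \leq 4^{p-1}\bigl(p^{p-1} + \ln(A)^{p-1}\bigr)$ preserves the split between the $p^p$ and $\ln(A)^p$ terms. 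The main obstacle is really just this last piece of bookkeeping: the numerical constant in $\vartheta_0^2 = 4e\Theta^2(p+\ln A)$ has to be small enough for the bulk to saturate at most half of the claimed bound, yet large enough that the exponential decay in $u_0$ is sufficient to swallow the tail with room to spare. Once both halves are verified, summing them gives the claimed moment estimate.
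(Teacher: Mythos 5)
Your argument is correct and follows essentially the same route as the paper: both integrate the tail probability of $Z^{2p}$ (your layer-cake form and the paper's $\int_0^\infty P(Z^{2p}>u)\,du$ are the same after the change of variables), both split at a threshold calibrated to $p + \ln A$, and both reduce the tail piece to $2pA(2e\Theta^2)^p\,\Gamma(p,u_0)$ with $u_0$ of size $\sim 2p + c\ln A$ so that the factor $A$ in front is cancelled by $e^{-u_0}$. The only genuine point of departure is the estimate for the upper incomplete gamma function: the paper invokes the inequality $\Gamma(p,w_0)\le 2w_0^{p-1}e^{-w_0}$ from an external reference (Borwein--Chan), whereas you replace it with the self-contained identity $\Gamma(p,u_0)=e^{-u_0}\int_0^\infty(u_0+v)^{p-1}e^{-v}\,dv$, the convexity inequality, and the crude bound $\Gamma(p)\le p^p$. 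Your version is cruder in the constants but keeps the proof elementary and free of the citation; the paper's version is tighter but leans on the cited estimate (valid for $p>1$, with $p=1$ handled separately). Either is adequate to land within the generous $(8e\Theta^2)^p$ envelope, and your bookkeeping at the end (separating the $p^p$ and $\ln(A)^p$ contributions and checking each factor is $\le 1/2$ using $A\ge 2$ and the exponential decay in $u_0$) does close the argument.
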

\begin{proof}
We pick an arbitrary $u_0$ and compute
\begin{equation}
\begin{array}{lcl}
E [ Z^{2p} ] & = & \int_0^\infty P( Z^{2p} > u) \, d u
\\[0.3cm]
& = & \int_0^\infty P( Z > \sqrt[2p]{u} ) \, d u 
\\[0.3cm]
& = & \int_0^{u_0} P (Z > \sqrt[2p]{u}) \, d u
 + \int_{u_0}^{\infty} P (Z > \sqrt[2p]{u}) \, d u
\\[0.3cm]
& \le &  u_0 + 2 A \int_{u_0}^\infty  e^{-u^{1/p}/(2e\Theta^2)} \, d u .
\\[0.3cm]
\end{array}
\end{equation}
Writing $w_0 = u_0^{1/p}/(2e\Theta^2)$
and recalling the upper incomplete
gamma function $\Gamma$,
we obtain
\begin{equation}
\begin{array}{lcl}
E [ Z^{2p} ]
& = & u_0
+ 2 p A (2e\Theta^2)^p \int_{w_0}^\infty
  v^{p-1} e^{-v} \, dv
\\[0.2cm]
& = & 
u_0 + 2 p A (2e\Theta^2)^p \Gamma(p , w_0) .
\end{array}
\end{equation}
Upon fixing $w_0 = 2p +\ln A$,
we may use \cite[eq. (1.5)]{borwein2009uniform}
for $p >1$ and a direct computation for $p=1$
to conclude that
\begin{equation}
    \Gamma(p, w_0) \le 2 w_0^{p-1} e^{-w_0}
\end{equation}
and hence
\begin{equation}
\begin{array}{lcl}
    E [ Z^{2p} ]
    & \le & (2e\Theta^2 w_0)^p
    + 4 p  A (2e \Theta^{2})^{p} w_0^{p-1} e^{-2p} A^{-1}
\\[0.2cm]
 & \le &  2 (2e\Theta^2 w_0)^p
 \\[0.2cm]
 & = &
 2 \big(2p +\ln( A) \big)^p (2e\Theta^{2})^p
 \\[0.2cm]
    & \le & 2^p \big( (2p)^p + \ln( A)^p \big) (2e\Theta^{2})^p,
\end{array}
\end{equation}
from which the desired bound follows.
\end{proof}

By applying a crude bound for tail-probabilities,
Lemmas \ref{p4:lem:prml:veraar} and \ref{p4:lem:prml:Tal} 
can be combined to control maximum expectations. This results in the following useful logarithmic growth estimate.
\begin{cor}\label{p4:cor:prml:max}
Consider $N\geq 2$ non-negative random variables $Y_1,Y_2,...,Y_N$ and suppose that there exists $\Theta > 0 $ so that the bound
\begin{align}
    E\left[Y_i^{2p}\right]\leq p^p \Theta^{2p}
\end{align}
holds for all integers $p \geq 1$ and each  $i\in\{1,..,N\}$.
Then for any $p \ge 1$ we have the 
bound
\begin{equation}
\begin{array}{lcl}
    E\max_{i \in \{1, \ldots, N\}} Y^{2p}_i
    &\leq&
     \Big(p^p + \ln(N)^p \big) (8e \Theta^{2})^p.
\end{array}
\end{equation}
\end{cor}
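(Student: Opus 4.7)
The plan is to combine the two previous lemmas in the obvious way, using a union bound to bridge from individual moment estimates to a tail bound on the maximum, and then converting back to a moment bound.

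First I would apply Lemma \ref{p4:lem:prml:veraar} to each individual random variable $Y_i$. Since by hypothesis $E[Y_i^{2p}] \leq p^p \Theta^{2p}$ for every integer $p \geq 1$, the lemma yields the Gaussian-type tail bound
\begin{equation*}
    P(Y_i > \vartheta) \leq 2\exp\Big[-\frac{\vartheta^2}{2e\Theta^2}\Big]
\end{equation*}
for all $\vartheta > 0$ and each $i \in \{1,\dots,N\}$.

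Next I would take a union bound over the $N$ indices. For any $\vartheta > 0$, the event $\{\max_i Y_i > \vartheta\}$ is contained in $\bigcup_{i=1}^N \{Y_i > \vartheta\}$, so subadditivity of $P$ gives
\begin{equation*}
    P\Big(\max_{i \in \{1,\dots,N\}} Y_i > \vartheta\Big) \leq 2N \exp\Big[-\frac{\vartheta^2}{2e\Theta^2}\Big].
\end{equation*}
Since $N \geq 2$, the hypothesis $A \geq 2$ of Lemma \ref{p4:lem:prml:Tal} is met with $A = N$.

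Finally I would apply Lemma \ref{p4:lem:prml:Tal} to the non-negative random variable $Z = \max_i Y_i$ with this choice of $A$, which immediately produces the bound
\begin{equation*}
    E\Big[\max_{i \in \{1,\dots,N\}} Y_i^{2p}\Big] \leq \big(p^p + \ln(N)^p\big)(8e\Theta^2)^p
\end{equation*}
for every $p \geq 1$, as required. There is no real obstacle here; the only subtlety is that one loses a factor of $2e$ in the conversion from moments to tails and back (via Lemmas \ref{p4:lem:prml:veraar} and \ref{p4:lem:prml:Tal}), which is absorbed into the constant $8e$ and into the additive $\ln(N)^p$ term. This additive logarithmic term is exactly the ``price'' paid for passing to the maximum over $N$ variables, and it is the feature that will later deliver the desired logarithmic-in-$T$ chaining bounds.
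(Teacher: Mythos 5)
Your proposal is correct and follows exactly the paper's own route: apply Lemma~\ref{p4:lem:prml:veraar} to each $Y_i$ for a tail bound, take a union bound to obtain a tail estimate on $\max_i Y_i$ with prefactor $2N$, and then invoke Lemma~\ref{p4:lem:prml:Tal} with $A=N$. No substantive difference from the paper's argument.
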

\begin{proof}
For any $\vartheta>0$ we may use Lemma \ref{p4:lem:prml:veraar} to estimate
\begin{align}
    P\big(\max_{i \in \{1, \ldots, N \}}Y_i>\vartheta \big) \leq \sum_{i=1}^N P(Y_i>\vartheta)\leq 2N\exp\left(-\frac{\vartheta^2}{2 e \Theta^2}\right),
\end{align}
so we can directly apply Lemma \ref{p4:lem:prml:Tal}. 
\end{proof}

\subsection{Supremum bounds}
\label{p4:sec:prlm:sup}
In this subsection we collect several key results that we will use to understand
stochastic convolutions such as \sref{p4:eq:int:exL2}. In order to setup such integrals in a precise fashion, we follow the extensive
discussion in \cite[\S5]{hamster2020} and introduce the Hilbert space
\begin{equation}
    L^2_Q = L^2_Q(\R) = Q^{1/2}\left(L^2(\R) \right) ,
\end{equation}
together with the set of Hilbert-Schmidt operators
\begin{align}
    HS = HS(L^2_Q,L^2) = HS\big(L^2_Q(\R),L^2(\R)\big)
\end{align}
that map $L^2_Q(\R)$ into $L^2(\R)$. Choosing an orthonormal basis $(e_k)$ for $L^2(\R)$, we recall that the Hilbert-Schmidt norm of the operator $B$ is given by
\begin{equation}
    \norm{B}_{HS}^2 = \sum_{k=0}^\infty \norm{B \sqrt Q{e}_k}_{L^2}^2.
\end{equation}
Fixing a complete filtered probability space
$\Big(\Omega, \mathcal{F}, ( \mathcal{F}_t)_{t \ge 0} , \mathbb{P} \Big)$,
it turns out 
\cite{concise,revuz2013continuous,Karczewska2005} that stochastic integrals against $dW^Q_t$ are well-defined if the integrand is taken from the class
\begin{equation}
\begin{array}{lcl}
    \mathcal{N}^2([0,T];(\mathcal{F}_t),HS) &:=&\{ B \in L^2\big( [0 , T] \times \Omega ;
  dt \otimes \mathbb{P} ; 
   HS \big):
  \\[0.2cm]
  & & \qquad \qquad 
  B \text{\, has a progressively \,} 
    (\mathcal{F}_t)\hbox{-measurable version}\}.
\end{array}
\end{equation}

Our previous results in \cite{hamster2017,hamster2018uneq,hamster2020}
relied heavily on various versions of the Burkholder-Davis-Gundy inequality,
but we only used the special case $p = 1$. The general form is stated
below, where we highlight the $p$-dependence of the prefactors
on the right-hand sides. We emphasize that
\sref{p4:eq:prlm:bdg:mild} - due to Veraar and Weis - is much more delicate
than standard martingale inequalities on account
of the convolution, which requires the semigroup to have special regularity properties.

\begin{lem}
\label{p4:lem:prml:BDG} 
There exists\footnote{Let us emphasize that all constants
that appear in this paper do not depend on $T$.} 
a constant $K_{\mathrm{cnv}} \geq 1$
so that for any $T > 0$, any integer $p \ge 1$  and any integrand
$B \in \mathcal{N}^2\left( [0,T] ; (\mathcal{F}_t ) ;
   HS(L_Q^2,L^2)\right)$ we have the bound
\begin{align}
\label{p4:eq:prlm:bdg}
E \sup_{0 \leq t \leq T}
\norm{
 \int_0^t B(s) \, d W^Q_s
}_{L^2}^{2p}
\leq &\,
K_{\mathrm{cnv}}^{2p} p^p
  E \left[ 
    \int_0^T \norm{B(s)}_{HS}^2 \, ds
  \right]^p ,
\intertext{together with its mild counterpart}
\label{p4:eq:prlm:bdg:mild}
E \sup_{0 \leq t \leq T}
\norm{
 \int_0^t S(t-s)B(s) \, d W^Q_s
}_{L^2}^{2p}
\leq&\,
K_{\mathrm{cnv}}^{2p} p^p
  E \left[ 
    \int_0^T \norm{B(s)}_{HS}^2 \, ds
  \right]^p .
\end{align}
\end{lem}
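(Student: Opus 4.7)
Both estimates are essentially off-the-shelf martingale inequalities; the only point that requires attention is tracking the $p$-dependence of the prefactors.

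For the non-mild bound \sref{p4:eq:prlm:bdg}, I would introduce $M_t := \int_0^t B(s) \, dW^Q_s$, a continuous $L^2$-valued square-integrable martingale with predictable quadratic variation $\langle M \rangle_T = \int_0^T \|B(s)\|_{HS}^2 \, ds$, and apply the classical Hilbert-space-valued Burkholder-Davis-Gundy inequality (see e.g.\ \cite{concise} or Section 3 of \cite{veraar2011note}):
\[
\Big\|\sup_{0 \le t \le T}\|M_t\|_{L^2}\Big\|_{L^q(\Omega)} \;\le\; c \sqrt{q}\, \Big\|\langle M \rangle_T^{1/2}\Big\|_{L^q(\Omega)}, \qquad q \ge 2,
\]
with an absolute constant $c > 0$. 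Choosing $q = 2p$ and raising both sides to the $2p$-th power produces the factor $(c \sqrt{2p})^{2p} = (2c^2)^p\,p^p$, yielding \sref{p4:eq:prlm:bdg} with $K_{\mathrm{cnv}}^{2p} = (2c^2)^p$.

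For the mild bound \sref{p4:eq:prlm:bdg:mild} the plan is to invoke the mild Burkholder-Davis-Gundy inequality of Veraar-Weis \cite{veraar2011note}, which applies to stochastic convolutions whose generator admits a bounded $H^\infty$-calculus on a Hilbert space. To fit that framework I would split $B(s) = P_{\mathrm{tw}} B(s) + \Pi B(s)$ using the spectral projection from \S\ref{p4:sec:prlm:det}. On the one-dimensional range of $P_{\mathrm{tw}}$ the semigroup acts as the identity, so the kernel contribution reduces to a classical $L^2$-valued martingale and is estimated directly by \sref{p4:eq:prlm:bdg}. For the $\Pi B(s)$ piece, the restricted generator $\mathcal{L}_{\mathrm{tw}}|_{\mathrm{range}(\Pi)}$ is sectorial of angle $< \pi/2$ with spectrum contained in $\{\Re \lambda \le -2\beta\}$; since $\rho \partial_{xx}$ is self-adjoint and negative on $L^2$ it enjoys a bounded $H^\infty$-calculus of any positive angle, and this property is inherited by $\mathcal{L}_{\mathrm{tw}}|_{\mathrm{range}(\Pi)}$ through standard lower-order perturbation results for $H^\infty$-calculus on Hilbert spaces. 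The Veraar-Weis inequality then delivers the desired $p^p$ scaling with a constant depending only on the $H^\infty$-bound and the uniform semigroup bound $M$ from Lemma \ref{p4:lem:prml:semigroup}, hence independent of $T$.

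\textbf{Main obstacle.} The most delicate ingredient is the verification of the bounded $H^\infty$-calculus for the non-self-adjoint operator $\mathcal{L}_{\mathrm{tw}}|_{\mathrm{range}(\Pi)}$. Concretely, one needs a relative boundedness estimate of the form $\|(c_0 \partial_x + Df(\Phi_0))v\|_{L^2} \le \eta \|\rho \partial_{xx} v\|_{L^2} + C_\eta \|v\|_{L^2}$ for arbitrarily small $\eta > 0$, which follows by interpolation on $\R$ together with the boundedness of $\Phi_0$ and $\Phi_0'$. With this perturbation bound in hand, the calculus is inherited from $\rho \partial_{xx}$ and the remainder of the argument is a direct citation of Veraar-Weis, carrying over the $p^p$ prefactor without modification.
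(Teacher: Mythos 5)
Your treatment of the non-mild inequality \sref{p4:eq:prlm:bdg} matches the paper's approach exactly: both reduce to a martingale Burkholder--Davis--Gundy estimate with an explicit $\sqrt{q}$ growth in the constant (the paper cites Prop.~2.1 and Rem.~2.2 of \cite{veraar2011note}; your $L^q$-formulation with $q = 2p$ is equivalent). For the mild inequality \sref{p4:eq:prlm:bdg:mild} you take a genuinely different route. The paper simply cites Lem.~9.7 of \cite{hamster2017} for the boundedness of the $H^\infty$-calculus of $\mathcal{L}_{\mathrm{tw}}$ and then invokes Thm.~1.1 of \cite{veraar2011note} with no decomposition of $B$ whatsoever, reducing the proof to a two-line citation. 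Your alternative --- splitting $B = P_{\mathrm{tw}} B + \Pi B$, observing that $S(t)P_{\mathrm{tw}} = P_{\mathrm{tw}}$ so the kernel contribution collapses to the already-proved bound \sref{p4:eq:prlm:bdg}, and then rebuilding the $H^\infty$-calculus for $\mathcal{L}_{\mathrm{tw}}|_{\mathrm{range}(\Pi)}$ from that of $\rho\,\partial_{xx}$ via lower-order perturbation --- is a sound and more self-contained argument, with the pedagogical advantage of making explicit how the neutral eigenvalue at $\lambda = 0$ is handled before Veraar's theorem is applied. The one caveat is that inheritance of bounded $H^\infty$-calculus under relatively bounded perturbations is not automatic: the property can fail under seemingly innocent lower-order perturbations, and the statement you need (small relative bound preserves the calculus, at the price of a shift of the generator and possibly an enlarged sector angle) deserves a precise citation rather than the qualifier ``standard.'' You do correctly note that the relative bound can be made arbitrarily small by interpolation on $\R$, which is what makes such a perturbation result applicable; the paper sidesteps the whole issue by delegating it wholesale to the earlier reference.
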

\begin{proof}
We note first that $L^2(\R)$ is a Banach space of type 2. In particular,
\sref{p4:eq:prlm:bdg} follows from 
\cite[Prop. 2.1 and Rem. 2.2]{veraar2011note};
see also \cite[Thm. 4.36]{dapratozab}.
In addition, the linear operator $\mathcal{L}_{\mathrm{tw}}$ admits a bounded $H^\infty$-calculus \cite[Lem. 9.7]{hamster2017},
which allows us to apply \cite[Thm. 1.1]{veraar2011note}
and obtain \sref{p4:eq:prlm:bdg:mild}.
\end{proof}

We remark that the inequalities \sref{p4:eq:prlm:bdg}-\sref{p4:eq:prlm:bdg:mild}
are very strong and useful on short time intervals, but on longer timescales
it is no longer possible to exploit the decay properties of the semigroup.
Indeed, the right-hand side of \sref{p4:eq:prlm:bdg:mild} grows linearly in time
for integrands that are constant - as for the Ornstein-Uhlenbeck processes.
This changes if one drops the supremum.

\begin{cor}
\label{p4:cor:prml:BDG}
Consider the setting of Lemma \ref{p4:lem:prml:BDG}. 
Then for any $0 \leq t \leq T$ and any integer $p \ge 1$ we have the bound
\begin{equation}
\label{p4:eq:prlm:bdg:naked}
E \norm{
 \int_0^t S(t-s)B(s) \, d W^Q_s
}_{L^2}^{2p}
\leq 
K_{\mathrm{cnv}}^{2p} p^p
  E \left[ 
    \int_0^t \norm{S(t-s)B(s)}_{HS}^2 \, ds
  \right]^p .
\end{equation}
\end{cor}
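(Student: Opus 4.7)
The plan is to reduce Corollary \ref{p4:cor:prml:BDG} to the non-mild inequality \sref{p4:eq:prlm:bdg} by absorbing the semigroup factor $S(t-s)$ directly into the integrand. Since no supremum in $t$ appears on the left-hand side of \sref{p4:eq:prlm:bdg:naked}, we may treat $t$ as a fixed parameter and sidestep the delicate mild version \sref{p4:eq:prlm:bdg:mild} entirely. Concretely, for a given $t \in [0,T]$ I would introduce the auxiliary integrand
\begin{equation}
    \tilde B_t(s) = S(t-s)\, B(s), \qquad s \in [0,t].
\end{equation}

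The first step is to verify that $\tilde B_t$ lies in $\mathcal{N}^2\bigl([0,t];(\mathcal{F}_s); HS\bigr)$. Progressive measurability follows from the progressive measurability of $B$ combined with the fact that $s \mapsto S(t-s)$ is norm-continuous on $[0,t)$, which is immediate from the analyticity of the semigroup generated by $\mathcal{L}_{\mathrm{tw}}$ (see \S\ref{p4:sec:prlm:det}). The required square-integrability follows from the uniform bound $\nrm{S(r)}_{\mathcal{L}(L^2,L^2)} \le M$ supplied by Lemma \ref{p4:lem:prml:semigroup}, which gives $\nrm{\tilde B_t(s)}_{HS} \le M \nrm{B(s)}_{HS}$ and hence allows us to inherit the integrability of $B$ on $[0,T]$.

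The second step is to apply the non-mild BDG inequality \sref{p4:eq:prlm:bdg} to the integrand $\tilde B_t$, with the upper limit $t$ in place of $T$. This yields
\begin{equation}
    E \sup_{0 \le r \le t} \Big\| \int_0^r S(t-s)\, B(s)\, dW^Q_s \Big\|_{L^2}^{2p}
    \le K_{\mathrm{cnv}}^{2p} p^p\, E \Big[ \int_0^t \nrm{S(t-s)B(s)}_{HS}^2 \, ds \Big]^p.
\end{equation}
The desired bound \sref{p4:eq:prlm:bdg:naked} is then obtained by restricting the supremum on the left-hand side to the single endpoint $r = t$ and using the non-negativity of $\nrm{\cdot}_{L^2}^{2p}$.

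I do not anticipate any substantive obstacle. The only technical point is confirming the progressive measurability of $\tilde B_t$, which is a routine consequence of analyticity. Conceptually, Corollary \ref{p4:cor:prml:BDG} and the mild estimate \sref{p4:eq:prlm:bdg:mild} encode complementary trade-offs that are both exploited later in \S\ref{p4:sec:supb}: the present corollary retains the semigroup decay inside the Hilbert-Schmidt integral at the cost of dropping the supremum, while the mild version preserves the supremum by paying with the $H^\infty$-calculus and forfeiting the decay factor on the right-hand side.
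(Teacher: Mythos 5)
Your proof is correct and follows exactly the same route as the paper's: fix $t$, treat $s \mapsto S(t-s)B(s)$ as a new integrand in the non-mild inequality \sref{p4:eq:prlm:bdg} applied on $[0,t]$, then drop the supremum. The paper's version is terser (it simply bounds the quantity by the supremum over $\tilde t \in [0,t]$ of $\norm{\int_0^{\tilde t} S(t-s)B(s)\,dW^Q_s}_{L^2}^{2p}$ and invokes \sref{p4:eq:prlm:bdg}), while you additionally spell out the measurability check, which is a reasonable thing to record.
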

\begin{proof}
Note that 
\begin{equation}
E \norm{
 \int_0^t S(t-s)B(s) \, d W^Q_s
}_{L^2}^{2p}
\leq  E\sup_{0\leq \tilde t\leq t}\norm{
 \int_0^{\tilde t} S(t-s)B(s) \, d W^Q_s
}_{L^2}^{2p},
\end{equation}
so the result follows directly from \sref{p4:eq:prlm:bdg}. 
\end{proof}

The following general result due to
Dirksen generalizes \cite[eq. (2.49)]{talagrand}
and is the key ingredient that will allow us to significantly
improve the bound \sref{p4:eq:prlm:bdg:mild}. It is based
on the chaining principle developed by Talagrand, which requires us to understand
the tail behavior of the probability distribution for
the temporal increments of stochastic process.

\begin{thm}[\cite{dirksen2015tail}]
\label{p4:thm:supb:talagrand}
There exists a constant $C_\mathrm{ch} \ge 1$ so that the following holds true.
Consider a stochastic process 
$X: [0,T] \to L^2$ for some $T > 0$
with paths that are almost-surely continuous. Suppose furthermore
that there exists a metric $d = d(\cdot, \cdot)$ on $[0,T]$
so that the increments of $X$ satisfy the estimate
\begin{equation}
\label{p4:eq:prml:incrc}
P\left( 
\norm{X(t_1) - X(t_2) }_{L^2} > \vartheta \right)\leq 2 \exp\left(-\frac{\vartheta^2}{2d(t_1,t_2)^2}\right),
\end{equation}
for every $t_1, t_2 \in [0,T]$ and $\vartheta > 0$.
Then for any integer $p \ge 1$ we have the bound
\begin{align}
    E\sup_{0\leq t\leq T }\nrm{X(t)}_{L^2}^{2p}\leq C_\mathrm{ch}^{2p} \left(\int_0^\infty \sqrt{\ln\big(N(T,d,\nu)\big)}d\nu\right)^{2p}    +  C_\mathrm{ch}^{2p} p^p
         \mathrm{diam}(T,d)^{2p},
\end{align}
where $N(T,d,\nu)$ is the smallest number of intervals of length at most $\nu$
in the metric $d$ required to cover $[0,T]$
and $\mathrm{diam}(T,d)$ is the diameter
of $[0,T]$ in this metric.
\end{thm}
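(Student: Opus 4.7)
The plan is to execute a generic chaining argument in the style of Talagrand and Dirksen, producing first a sub-Gaussian tail bound for $\sup_{t} \|X(t) - X(t_0)\|_{L^2}$, and then converting this into the desired $2p$-th moment estimate by invoking Lemma \ref{p4:lem:prml:Tal}. The sub-Gaussian increment hypothesis \sref{p4:eq:prml:incrc} supplies precisely the input needed to drive the chaining machinery.

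First, I would construct dyadic approximating nets. Writing $r_n = 2^{-n}\mathrm{diam}(T,d)$, pick $T_n \subseteq [0,T]$ with $|T_n| \leq N(T,d,r_n)$ so that every $t \in [0,T]$ lies within $d$-distance $r_n$ of some $\pi_n(t) \in T_n$, taking $T_0 = \{t_0\}$ to be a singleton. Thanks to the almost-sure continuity of paths, the telescoping identity
\begin{equation}
X(t) - X(t_0) = \sum_{n \geq 1} \big(X(\pi_n(t)) - X(\pi_{n-1}(t))\big)
\end{equation}
holds almost surely, with each link satisfying $d(\pi_n(t),\pi_{n-1}(t)) \leq 3 r_{n-1}$ by the triangle inequality.

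Second, at each level $n$ there are at most $N(T,d,r_n) \cdot N(T,d,r_{n-1})$ admissible pairs, so \sref{p4:eq:prml:incrc} together with a union bound controls the level-$n$ increment in a sub-Gaussian fashion. Introducing a free parameter $u > 0$ and apportioning it across levels so that the union-bound logarithms $\ln(|T_n|\cdot|T_{n-1}|)$ get absorbed into the dyadic entropy sum $\sum_n r_{n-1}\sqrt{\ln N(T,d,r_n)}$ (which is equivalent to $\int_0^\infty \sqrt{\ln N(T,d,\nu)}\,d\nu$ up to a multiplicative constant), I would arrive at the additive tail estimate
\begin{equation}
P\Big( \sup_t \|X(t)-X(t_0)\|_{L^2} > C_1\!\int_0^\infty \sqrt{\ln N(T,d,\nu)}\,d\nu + C_1\sqrt{u}\,\mathrm{diam}(T,d) \Big) \leq 2 e^{-u}.
\end{equation}
The crucial qualitative feature is that the free-parameter cost $\sqrt{u}$ multiplies only $\mathrm{diam}(T,d)$, independently of the entropy integral.

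Finally, I would feed this tail into (a mild variant of) Lemma \ref{p4:lem:prml:Tal}, applied to the non-negative variable $(\sup_t \|X(t)-X(t_0)\|_{L^2} - C_1 \cdot \text{(entropy integral)})_+$ with $\Theta$ proportional to $\mathrm{diam}(T,d)$. This produces the $2p$-th moment bound with the entropy integral contributing its $2p$-th power deterministically and the Gaussian tail contributing $p^p \mathrm{diam}(T,d)^{2p}$; adding back the entropy shift and using $(a+b)^{2p} \leq 2^{2p}(a^{2p}+b^{2p})$ then yields the stated inequality. The main obstacle is the chaining bookkeeping that forces $p^p$ to attach only to the diameter term: this requires the apportionment of $u$ across dyadic scales so that the union-bound logarithms telescope geometrically into the entropy integral rather than into the tail-sensitivity parameter, and is the essential refinement that cleanly separates the deterministic entropy contribution from the probabilistic Gaussian concentration.
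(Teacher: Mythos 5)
Your argument is correct and in substance reproduces the mechanism behind the cited result; the paper itself offers no proof beyond a two-step pointer into Dirksen's work (take $\alpha=2$ in eq.\ (3.2) there for the additive tail bound, then apply the moment conversion from the proof of his Theorem~3.2), so you are filling in exactly the content that reference encapsulates. The chaining bookkeeping you describe is the standard Talagrand/Dirksen decomposition: dyadic nets $T_n$, telescoping along chains $\pi_n(t)$, a per-level union bound, and a careful apportionment of the tail budget $u$ so that its $\sqrt{u}$ cost rides on the geometrically decaying link radii $d_n\sim 2^{-n}\mathrm{diam}(T,d)$ and therefore on $\mathrm{diam}(T,d)$ alone, while the entropy logarithms are absorbed deterministically into $\sum_n r_n\sqrt{\ln N(T,d,r_n)}\sim\int_0^\infty\sqrt{\ln N(T,d,\nu)}\,d\nu$. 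Your closing step --- apply a mild variant of Lemma~\ref{p4:lem:prml:Tal} to the shifted variable $\big(\sup_t\|X(t)-X(t_0)\|-C_1\!\int\big)_+$ with $\Theta\sim\mathrm{diam}(T,d)$ and then use $(a+b)^{2p}\le 2^{2p}(a^{2p}+b^{2p})$ --- is precisely what ``the final inequality in the proof of Thm.~3.2'' does in Dirksen's paper. What your route buys is self-containedness and an explicit explanation of \emph{why} the $p^p$ factor attaches only to $\mathrm{diam}(T,d)^{2p}$, which is the feature the paper later leans on to get logarithmic rather than polynomial growth in $T$; what the paper's route buys is brevity and avoids duplicating a result that exists in the literature in a sharper ($\gamma_2$-functional) form, of which the Dudley integral used here is an upper bound.

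One small point worth flagging: as literally stated, the theorem controls $E\sup_t\|X(t)\|^{2p}$ rather than $E\sup_t\|X(t)-X(t_0)\|^{2p}$, and these coincide only if $X(t_0)=0$ for some $t_0$ (otherwise a nonzero constant process would violate the bound, since both the entropy integral and the diameter would vanish). Your proof correctly bounds the increment $\|X(t)-X(t_0)\|$, and the applications in the paper (to $\mathcal{E}_B^{\mathrm{lt}}$) all satisfy $X(0)=0$, so this is a hidden normalization rather than a gap in your argument --- but it is worth making explicit, since a careless reader could think the theorem holds for arbitrary continuous processes.
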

\begin{proof}
This bound follows by choosing
$\alpha = 2$ in \cite[eq. (3.2)]{dirksen2015tail}
and applying the final
inequality in the proof of \cite[Thm. 3.2]{dirksen2015tail}.
\end{proof}

\section{Supremum bounds for stochastic integrals}
\label{p4:sec:supb}
In this section we develop the machinery needed to 
obtain bounds for two types of stochastic integrals. 
In particular,  we introduce the $L^2$-valued integral
\begin{equation}
    \eb(t)=\int_0^tS(t-s)B(s)dW^Q_s
\end{equation}
together with the scalar integral
\begin{align}
\label{eq:supb:sc:itg}
\mathcal{I}^{\mspace{1mu}\mathrm{s}}_B(t)
 = &
\int_0^t e^{-\e(t-s)}
\int_0^{s}\ip{S(s-s') V(s'),S(s-s')B(s') \, dW^Q_{s'} }_{H^1} \, ds
\end{align}
and set out to obtain bounds for the quantities
\begin{equation}
    E \sup_{0 \leq t \leq T} \norm{\eb(t)}_{L^2}^{2p},
    \qquad 
    \qquad
    E \max_{i \in \{1 , \ldots, T\}} \abs{\mathcal{I}^{\mspace{1mu}\mathrm{s}}_B(i) }^{2p}.
\end{equation}
Recalling the constant $\beta > 0$ introduced
in Lemma \ref{p4:lem:prml:semigroup}, we 
take $\varepsilon \in (0, \beta)$
for the parameter 
appearing in \sref{eq:supb:sc:itg},
which we consider to be fixed throughout the entire section.

We will use the first of these expressions in \S\ref{p4:sec:nls:L2} to control
the $L^2$-norm of $V(t)$, while the second term
plays a crucial role in \S\ref{p4:sec:nls:H1} where we bound 
the $H^1$-norm of $V(t)$ in an integrated sense. In both cases
$B$ will be replaced by a (complicated) function of $V$, but
we make use of a generic placeholder here in order to emphasize
the broad applicability of our techniques. Indeed, we only need
to impose the following two general conditions on our integrands.

\begin{itemize}
    \item[(hB)]{
      The process $B \in \mathcal{N}^2\left( [0,T] ; (\mathcal{F}_t ); HS(L^2_Q,L^2) \right)$ satisfies
      \begin{align}
        \label{p4:eq:supb:b:psi:tw}
         \ip{B(t)v,\psi_{\mathrm{tw}}}_{L^2}=0
      \end{align}
      for all $t \in [0,T]$ and $v \in L^2_Q$.
      Furthermore, there exists
      $\Theta_* > 0$ so that the following pathwise bounds hold for all $0 \leq t \leq T$: 
      \begin{align}
      \label{p4:eq:supb:assumB}
      \begin{split}
      \int_0^t e^{-\e(t-s)} \norm{B(s)}_{HS}^{2}ds
    \leq \Theta_*^{2},\hspace{1cm}
        \norm{S(1) B(t)}_{HS}^{2}
                \leq \Theta_*^2.
    \end{split}
    \end{align}
    }
    \item[(hV)]   The process $V \in \mathcal{N}^2\left( [0,T] ; (\mathcal{F}_t ); H^1 \right)$ 
    satisfies
      \begin{align}
        \label{p4:eq:st:ip:v:psi:tw}
         \ip{V(t),\psi_{\mathrm{tw}}}_{L^2}=0
      \end{align}
      for all $t \in [0, T]$.
      Furthermore, there exists a $\Lambda_* > 0$ so that the pathwise bound
      \begin{equation}
      \nrm{V(t)}_{L^2}\leq \Lambda_*
      \end{equation}
      holds for all $0\leq t \leq T$. 
\end{itemize}

We remark that \sref{p4:eq:supb:b:psi:tw}
and \sref{p4:eq:st:ip:v:psi:tw} imply that $B$ and $V$ do not
feel the neutral mode of the semigroup. This allows us to use the decay rates
from Lemma \ref{p4:lem:prml:semigroup} and establish our main result below.
In particular, we obtain \sref{p4:eq:supb:mr:eb} in \S\ref{p4:sec:supb:eb}
and \sref{p4:eq:supb:mr:ib} in  \S\ref{p4:sec:supb:ib}. For convenience, we will consider $T$ to be an integer from now on. This will make the splitting of the integrals easier in the following sections and any results for non-integer $T$ can be established by rounding $T$ up to the nearest integer.

\begin{prop}
\label{p4:prop:supb:mr}
There exists a constant $K \ge 1$ so that
for any integer $T \geq 2$,
any pair of processes $(B,V)$ that satisfies (hB) and (hV) and any $p\ge 1$,
we have the supremum bound 
\begin{align}
\label{p4:eq:supb:mr:eb}
E \sup_{0 \leq t \leq T} \norm{\mathcal{E}_{B}(t)}_{L^2}^{2p} &\leq
\big( p^p + \ln(T)^p  \big) K^{2p} \Theta_*^{2p} 
\intertext{together with its counterpart}
\label{p4:eq:supb:mr:ib}
E \max_{i\in \{1,....,T\}}\abs{\mathcal{I}^{\mspace{1mu}\mathrm{s}}_{B}(i)}^{p} 
  &\leq\big( p^{p/2} + \ln(T)^{p/2}  \big)  K^{p} \Lambda_*^{p} \Theta_*^{p}
\end{align}
\end{prop}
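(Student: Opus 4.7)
Both estimates will be obtained by the same two-step blueprint. First, one establishes a pointwise moment bound of the form $E\,Z_i^{2p}\le p^p(\mathrm{const})^{2p}$ at every integer time $i\in\{0,\ldots,T\}$. Then one harvests the $\ln T$ factor by invoking Corollary~\ref{p4:cor:prml:max}. Between consecutive integer times the stochastic process may only fluctuate by an amount controlled by the mild Burkholder--Davis--Gundy inequality applied to a single unit window; this realises the ``$|t-s|\le 1$'' regime discussed in the introduction, while the max-over-integers plays the role of the ``$|t-s|\ge 1$'' chaining regime.

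\textbf{Bound on $\mathcal{E}_B$.}
For $i\in\{0,\ldots,T-1\}$ and $t\in[i,i+1]$, the semigroup property gives the decomposition
\begin{equation*}
  \mathcal{E}_B(t)=S(t-i)\,\mathcal{E}_B(i)+\int_i^t S(t-s)B(s)\,dW^Q_s,
\end{equation*}
with $\|S(t-i)\|_{\mathcal{L}(L^2,L^2)}\le M$ by Lemma~\ref{p4:lem:prml:semigroup}. Since (hB) forces $B(s)=\Pi B(s)$, Corollary~\ref{p4:cor:prml:BDG} combined with the decay estimate $\|S(\tau)\Pi\|_{\mathcal{L}(L^2,L^2)}\le Me^{-\beta\tau}$ and the standing choice $\varepsilon<\beta$ yields $E\|\mathcal{E}_B(i)\|_{L^2}^{2p}\le (MK_{\mathrm{cnv}})^{2p}p^p\Theta_*^{2p}$. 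The supremum-over-$[i,i+1]$ of the stochastic remainder is handled by the mild BDG inequality \sref{p4:eq:prlm:bdg:mild}, noting that $\int_i^{i+1}\|B(s)\|_{HS}^2\,ds\le e^\varepsilon\Theta_*^2$ by the first bound in (hB). Applying Corollary~\ref{p4:cor:prml:max} to each of these two families of $T$ non-negative random variables then delivers \sref{p4:eq:supb:mr:eb}.

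\textbf{Bound on $\mathcal{I}^s_B$.}
The identity \sref{p4:eq:supb:ib:splitip} is used to convert the $H^1$-inner product inside $\mathcal{I}^s_B$ into tractable $L^2$-expressions, producing a decomposition $\mathcal{I}^s_B=\mathcal{I}^{s,1}_B+\mathcal{I}^{s,2}_B$. The first part has integrand $\mathcal{J}_{\mathrm{tw}}(s-s')[V(s'),B(s')\,\cdot\,]$; the pointwise bound \sref{p4:eq:supb:ib:Bbound} — applicable because $V=\Pi V$ and $B=\Pi B$ — combined with scalar BDG, the pathwise bound $\|V\|_{L^2}\le\Lambda_*$, and the integrability assumed in (hB) yield $E|\mathcal{I}^{s,1}_B(i)|^{2p}\le p^p(C\Lambda_*\Theta_*)^{2p}$ for every integer $p$. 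For $\mathcal{I}^{s,2}_B$, which carries the derivative $\tfrac{d}{d\tau}\langle S(\tau)\,\cdot\,,S(\tau)\,\cdot\,\rangle_{L^2}$, the plan is to invoke stochastic Fubini to swap the $ds$ and $dW^Q$ integrals and then integrate by parts in $s$; the resulting boundary and bulk terms are all scalar stochastic integrals involving only $L^2$-inner products of the form $\langle S(\tau)V,S(\tau)B\,\cdot\,\rangle_{L^2}$, which are controlled by exactly the same scheme (using $\|S(\tau)\Pi\|\le Me^{-\beta\tau}$ to absorb the $e^{-\varepsilon(t-s)}$ weights). Corollary~\ref{p4:cor:prml:max} applied to $\{|\mathcal{I}^s_B(i)|\}_{i=1}^T$, followed by a Jensen interpolation to pass from the even-integer moment bound to arbitrary $p\ge 1$, produces \sref{p4:eq:supb:mr:ib}.

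\textbf{Main obstacle.}
The delicate step is the $\mathcal{I}^{s,2}_B$ piece: a direct attempt to bound the $H^1$-norm of $S(s-s')B(s')\,dW^Q_{s'}$ would require a smoothness of $B$ that (hB) does not provide. The identity \sref{p4:eq:supb:ib:splitip} — rooted in the $H^\infty$-calculus for $\mathcal{L}_{\mathrm{tw}}$ — is the crucial algebraic device that repackages the problematic $H^1$-contribution as a time derivative, which is then eliminated by an integration-by-parts step that leaves only benign $L^2$-type scalar stochastic integrals. Verifying the applicability of stochastic Fubini (joint measurability and $L^2$-integrability of the integrands) is additionally required, but is routine given (hB) and (hV).
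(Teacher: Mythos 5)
Your proposal is correct, and for \sref{p4:eq:supb:mr:eb} it takes a genuinely different, simpler route than the paper. The paper splits $\mathcal{E}_B=\mathcal{E}_B^{\mathrm{lt}}+\mathcal{E}_B^{\mathrm{sh}}$ and applies the generic chaining theorem (Theorem~\ref{p4:thm:supb:talagrand}) to the long-time piece after establishing increment tail bounds, while handling the short-time piece with a discrete max argument over the $\Upsilon_B^{(i)}$. You instead discretize the full integral at integers: writing $\mathcal{E}_B(t)=S(t-i)\mathcal{E}_B(i)+\int_i^tS(t-s)B(s)\,dW^Q_s$ for $t\in[i,i+1]$, you obtain $i$-uniform moment bounds for both pieces — the first by Corollary~\ref{p4:cor:prml:BDG} together with $\nrm{S(\tau)\Pi}\le Me^{-\beta\tau}$ and $\varepsilon<2\beta$, the second by the mild BDG inequality \sref{p4:eq:prlm:bdg:mild} over a unit window — and Corollary~\ref{p4:cor:prml:max} then produces the $\ln T$ factor without the chaining theorem ever being invoked. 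The paper's multi-scale chaining is more closely aligned with the framework the authors advertise and would be the more robust choice if the moment bounds degraded with $i$; here they do not (the semigroup decay renders $E\nrm{\mathcal{E}_B(i)}_{L^2}^{2p}$ uniformly bounded in $i$), so your one-level discretization suffices and is cleaner. Your sketch of \sref{p4:eq:supb:mr:ib} is essentially the paper's proof in a slightly different order: stochastic Fubini to pull out the $dW^Q$ integral, the identity \sref{p4:eq:supb:ib:splitip} to split the $H^1$-inner product into a $\mathcal{J}_{\mathrm{tw}}$-piece (controlled by \sref{p4:eq:supb:ib:Bbound} once $V=\Pi V$ and $B=\Pi B$ are used) plus a time-derivative piece handled by integration by parts into boundary and bulk $L^2$-terms, then scalar BDG and Corollary~\ref{p4:cor:prml:max}, with the small Jensen step needed to pass from the $2p$-moment bound to exponents $p\in[1,2)$.
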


\subsection{Estimates for $\mathcal{E}_B$}
\label{p4:sec:supb:eb}
Motivated by the considerations in the introduction,
we make the splitting
\begin{equation}
  \mathcal{E}_{B}(t) = \mathcal{E}_{B}^{\mathrm{lt}}(t) + \mathcal{E}_{B}^{\mathrm{sh}}(t),
\end{equation}
in which the short time (sh) and long time (lt) contributions 
are respectively given by 
\begin{equation}
    \mathcal{E}_{B}^{\mathrm{lt}}(t)
    = \int_0^{t-1} S(t-s) B(s) \, d W^Q_s,
    \qquad \qquad
    \mathcal{E}_{B}^{\mathrm{sh}}(t)
    = \int_{t-1}^{t} S(t-s) B(s) \, d W^Q_s ,
\end{equation}
where we interpret the boundary $t-1$ as $\max\{t-1,0\}$ if necessary. 
Both these terms need to be handled using separate techniques.

\paragraph{Short time bounds}
Remembering that $T$ is an integer, we introduce the function
\begin{equation}
\begin{array}{lcl}
    \Upsilon_{B}^{(i)}
    & = & \sup_{0 \leq s \leq 1} \norm{ \int_i^{i+s} S(i+s - s') B(s') \, d W^Q_{s'} }_{L^2}
    \end{array}
\end{equation}
for any $i \in \{0, \ldots, T-1\}$, while for $i=-1$ we define $\Upsilon_{B}^{(i)}=0$.  An elementary computation
allows us to bound $\mathcal{E}^{\mathrm{sh}}_B(t)$ in terms of 
at most two of this finite set of quantities.

\begin{lem}
\label{p4:lem:supb:st:split}
Pick any integer $T \geq 2$ and assume that (hB) holds.
Then for all $0 \leq t \leq T$ we have the bound
\begin{equation}
    \norm{\mathcal{E}_B^{\mathrm{sh}}(t)}_{L^2}
    \leq 2 M \Upsilon_{B}^{(\lfloor t \rfloor - 1)}
      + \Upsilon_{B}^{(\lfloor t \rfloor )}.
\end{equation}
\end{lem}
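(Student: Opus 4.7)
The plan is to set $n = \lfloor t \rfloor$ and $\tau = t - n \in [0,1)$, and to reduce everything to the defining supremum of $\Upsilon_B^{(\cdot)}$. First I would dispose of the trivial case $t \leq 1$, where $\lfloor t\rfloor \in \{0,1\}$ and the truncation $\max\{t-1,0\} = 0$ makes $\mathcal{E}_B^{\mathrm{sh}}(t) = \int_0^t S(t-s) B(s)\,dW^Q_s$; this is precisely an instance of the integral inside the supremum defining $\Upsilon_B^{(0)}$ (with parameter $s = t$), and the claimed bound follows since $\Upsilon_B^{(-1)} = 0$ by convention.

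The substantive case is $n \geq 1$, where the integration window $[n+\tau-1,\,n+\tau]$ straddles the integer $n$, so one splits
\begin{equation*}
\mathcal{E}_B^{\mathrm{sh}}(t) \;=\; \underbrace{\int_{n+\tau-1}^{n} S(t-s) B(s)\,dW^Q_s}_{=:\,I_1} \;+\; \underbrace{\int_{n}^{n+\tau} S(t-s) B(s)\,dW^Q_s}_{=:\,I_2}.
\end{equation*}
The term $I_2$ is immediate: substituting $t = n + \tau$ shows it is exactly of the form in the definition of $\Upsilon_B^{(n)}$ at parameter $s = \tau \in [0,1]$, yielding $\norm{I_2}_{L^2} \leq \Upsilon_B^{(n)}$.

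For $I_1$ I would anchor the interval at the integer $n-1$ by writing it as a difference,
\begin{equation*}
I_1 \;=\; \int_{n-1}^{n} S(t-s) B(s)\,dW^Q_s \;-\; \int_{n-1}^{n+\tau-1} S(t-s) B(s)\,dW^Q_s,
\end{equation*}
and then pull a deterministic semigroup factor out of each stochastic integral via the semigroup law $S(t-s) = S(t-u)\,S(u-s)$, valid whenever $s \leq u \leq t$. Choosing $u = n$ in the first integral and $u = n+\tau-1$ in the second gives
\begin{equation*}
I_1 \;=\; S(\tau) \int_{n-1}^{n} S(n-s) B(s)\,dW^Q_s \;-\; S(1) \int_{n-1}^{n+\tau-1} S\bigl((n+\tau-1)-s\bigr) B(s)\,dW^Q_s.
\end{equation*}
Each remaining stochastic integral now matches an instance of the family defining $\Upsilon_B^{(n-1)}$ — with sup-parameters $s=1$ and $s=\tau$ respectively — and the first bound in Lemma \ref{p4:lem:prml:semigroup} gives $\norm{S(\tau)}, \norm{S(1)} \leq M$. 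The triangle inequality then yields $\norm{I_1}_{L^2} \leq 2M\,\Upsilon_B^{(n-1)}$, and combining with the bound on $I_2$ produces the claim.

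The only point that needs justification beyond bookkeeping is the commutation of the deterministic bounded operators $S(\tau)$ and $S(1)$ with the stochastic integrals, which is standard since they do not depend on the integration variable $s$. The main obstacle I anticipated — and the reason the naive approach of pulling $S(\tau)$ out of $I_1$ from the start produces a constant $M(1+M)$ rather than $2M$ — is precisely that one must split first and only then factor the semigroup within each piece, so that each $\Upsilon_B^{(n-1)}$ picks up exactly one factor of $M$.
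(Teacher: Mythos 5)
Your proof is correct and takes essentially the same approach as the paper: both split $\mathcal{E}_B^{\mathrm{sh}}(t)$ at the integer $\lfloor t\rfloor$, anchor the left piece at $\lfloor t\rfloor - 1$ by writing it as a difference of two integrals, and then pull a bounded semigroup factor ($S(t-\lfloor t\rfloor)$ and $S(1)$ respectively) out of each so that every remaining stochastic integral is recognized as an instance of the family defining $\Upsilon_B^{(\lfloor t\rfloor - 1)}$. Your closing remark about why one must split before factoring — to avoid picking up a superfluous $M$ and obtaining $M(1+M)$ instead of $2M$ — is also a correct reading of the structure.
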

\begin{proof}
Since the estimate is immediate for $0 \leq t < 1$, we pick $t \geq 1$. Splitting
the integral yields
\begin{align}
\begin{split}
    \norm{\mathcal{E}_B^{\mathrm{sh}}(t)}_{L^2} 
      \leq&\, \norm{ \int_{t-1}^{\lfloor t \rfloor} S(t-s)B(s) dW^Q_s}_{L^2}
    + \norm{ \int_{\lfloor t \rfloor }^{ t} S(t-s)B(s) dW^Q_s}_{L^2}
    \\[0.2cm]
     \leq &\,
    \norm{ \int_{\lfloor t \rfloor-1}^{\lfloor t \rfloor} S(t-s)B(s) dW^Q_s}_{L^2}
    + \norm{ \int_{\lfloor t \rfloor-1}^{t-1} S(t-s)B(s) dW^Q_s}_{L^2}\\[0.2cm]
    &+ \norm{ \int_{\lfloor t \rfloor }^{ t} S(t-s)B(s) dW^Q_s}_{L^2}.
\end{split}
\end{align}
Using Lemma \ref{p4:lem:prml:semigroup} we obtain the estimate
\begin{equation}
\begin{array}{lcl}
\norm{ \int_{\lfloor t \rfloor-1}^{\lfloor t \rfloor} S(t-s)B(s) dW^Q_s}_{L^2}  
&\le& \norm{S(t-\lfloor t \rfloor)}_{\mathcal{L}(L^2,L^2)}
\norm{ \int_{\lfloor t \rfloor-1}^{\lfloor t \rfloor} S(\lfloor t \rfloor-s)B(s) dW^Q_s}_{L^2}  
\\[0.2cm]
& \leq & 
M \norm{ \int_{\lfloor t \rfloor-1}^{\lfloor t \rfloor} S(\lfloor t \rfloor-s)B(s) dW^Q_s}_{L^2}  
\\[0.4cm]
& \leq &
M \Upsilon_{B}^{(\lfloor t \rfloor - 1)} ,
\end{array}
\end{equation}
together with
\begin{equation}
\begin{array}{lcl}
\norm{ \int_{\lfloor t \rfloor-1}^{t-1} S(t-s)B(s) dW^Q_s}_{L^2}
&\le& \norm{S(1)}_{\mathcal{L}(L^2,L^2)}
\norm{ \int_{\lfloor t \rfloor-1}^{t-1} S(t-1-s)B(s) dW^Q_s}_{L^2}  
\\[0.2cm]
& \leq & 
M \Upsilon_{B}^{(\lfloor t \rfloor - 1)},
\end{array}
\end{equation}
from which the desired bound readily follows.
\end{proof}

\begin{cor}
\label{p4:cor:supb:3m}
Pick any integer $T\geq 2$
and assume that (hB) holds. Then  for all $p\geq 1$ we have the pathwise bound
\begin{equation}
\label{p4:eq:st:bnd:sup:eb:sh:vs:max}
    \sup_{0 \leq t \leq T} \norm{\mathcal{E}_B^{\mathrm{sh}}(t)}^{2p}_{L^2}
    \leq \left(3M\right)^{2p} \max_{i \in \{0, \ldots, T-1\}}  \left(\Upsilon_{B}^{(i)}\right)^{2p}.
\end{equation}
\end{cor}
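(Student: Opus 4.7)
The plan is to obtain this corollary as an immediate consequence of Lemma \ref{p4:lem:supb:st:split}, by raising the pointwise bound to the $2p$-th power and then taking the supremum over $t$. My first step is to apply Lemma \ref{p4:lem:supb:st:split} for each $t \in [0,T]$, which produces the estimate
\begin{equation*}
\norm{\mathcal{E}_B^{\mathrm{sh}}(t)}_{L^2} \leq 2M \, \Upsilon_B^{(\lfloor t \rfloor - 1)} + \Upsilon_B^{(\lfloor t \rfloor)}.
\end{equation*}
Since $M \geq 1$, I can absorb the coefficient of the second term into a common factor $M$, so that the right-hand side is at most
\begin{equation*}
M \bigl( 2 \Upsilon_B^{(\lfloor t \rfloor - 1)} + \Upsilon_B^{(\lfloor t \rfloor)} \bigr) \leq 3M \, \max_{i \in \{0,\ldots,T-1\}} \Upsilon_B^{(i)},
\end{equation*}
where both occurring indices are covered by the maximum once the convention $\Upsilon_B^{(-1)} = 0$ is invoked.

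The only piece of bookkeeping that requires a moment of care is the edge case $t = T$, where $\lfloor t \rfloor = T$ and the index technically exceeds the range of the maximum. In that case, however, the splitting used in the proof of Lemma \ref{p4:lem:supb:st:split} shows that the contribution from the interval $[\lfloor t \rfloor, t]$ vanishes entirely, so only the term $\Upsilon_B^{(T-1)}$ enters the bound; equivalently one can appeal to pathwise continuity of $\mathcal{E}_B^{\mathrm{sh}}$ and take a limit from $t < T$. Either way, the estimate above holds uniformly in $t \in [0,T]$, and taking the supremum on the left and raising both sides to the $2p$-th power yields the stated inequality.

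Since this corollary is a purely mechanical repackaging of the previous lemma, there is no substantial obstacle to overcome; the only subtlety is the index-range cleanup described above. The real work in this section lies in using the resulting bound together with the moment estimates of \S\ref{p4:sec:prlm:mom:tail} to convert pathwise suprema into the logarithmic growth bound announced in Proposition \ref{p4:prop:supb:mr}.
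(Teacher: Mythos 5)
Your proof is correct and is essentially the argument the paper intends; the corollary is stated without proof because it follows immediately from Lemma \ref{p4:lem:supb:st:split} together with $M\geq 1$, exactly as you describe. Your careful handling of the $t=T$ edge case (noting the vanishing of the $[\lfloor t\rfloor, t]$ contribution, or alternatively taking a limit using path continuity) is a valid and appropriate way to close the small indexing gap.
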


The expectation of the right-hand  side of 
\sref{p4:eq:st:bnd:sup:eb:sh:vs:max} can be controlled
using Corollary \ref{p4:cor:prml:max}. We hence require moment bounds on $\Upsilon_B^{(i)}$,
which can be obtained by applying the mild Burkholder-Davis-Gundy inequality.
Here we use the crucial fact that $\mathcal{L}_{\mathrm{tw}}$
admits an $H^\infty$-calculus.

\begin{lem}
\label{p4:lem:supb:st:moment}
Pick any integer $T\geq 2$
and assume that (hB) holds. Then for any integer $p \geq 1$
and any $i \in \{0, \ldots , T-1\}$ we have the bound
\begin{equation}
\label{p4:eq:st:bnd:sup:eb:ups:bnd}
  E \left[\Upsilon_B^{(i)}\right]^{2p}
     \leq   K_{\mathrm{cnv}}^{2p} p^{p}
     e^{\e p} \Theta_*^{2p}.
\end{equation}
\end{lem}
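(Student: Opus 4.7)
The plan is to obtain \sref{p4:eq:st:bnd:sup:eb:ups:bnd} as a direct application of the mild Burkholder--Davis--Gundy inequality \sref{p4:eq:prlm:bdg:mild}, together with a brief bookkeeping argument that converts the exponentially weighted integral in hypothesis (hB) into an unweighted one on the unit window $[i,i+1]$.

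First I would perform a time shift in order to reduce the sup defining $\Upsilon_B^{(i)}$ to one that starts at the origin. Introducing the shifted filtration $\widetilde{\mathcal{F}}_s = \mathcal{F}_{i+s}$, the shifted $Q$-Wiener process $\widetilde W^Q_s = W^Q_{i+s} - W^Q_i$ and the integrand $\widetilde B(s) = B(i+s)$, one has
\begin{equation}
\Upsilon_B^{(i)} = \sup_{0 \le s \le 1} \Big\| \int_0^s S(s - s') \widetilde B(s') \, d\widetilde W^Q_{s'} \Big\|_{L^2},
\end{equation}
and the integrand $\widetilde B$ lies in $\mathcal{N}^2([0,1];(\widetilde{\mathcal{F}}_s); HS)$. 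Applying \sref{p4:eq:prlm:bdg:mild} to this shifted integral (with $T=1$) directly gives
\begin{equation}
E\bigl[\Upsilon_B^{(i)}\bigr]^{2p}
\le K_{\mathrm{cnv}}^{2p} p^p \, E\Bigl[ \int_0^1 \norm{\widetilde B(s)}_{HS}^2 \, ds \Bigr]^p
= K_{\mathrm{cnv}}^{2p} p^p \, E\Bigl[ \int_i^{i+1} \norm{B(s)}_{HS}^2 \, ds \Bigr]^p.
\end{equation}

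Next I would upgrade the weighted pathwise estimate in (hB) to an unweighted bound on the window $[i,i+1]$. Applying the first inequality in \sref{p4:eq:supb:assumB} at $t = i+1$, and noting that $e^{-\e(i+1-s)} \ge e^{-\e}$ for $s \in [i,i+1]$, we get pathwise
\begin{equation}
\int_i^{i+1} \norm{B(s)}_{HS}^2 \, ds
\le e^{\e} \int_i^{i+1} e^{-\e(i+1-s)} \norm{B(s)}_{HS}^2 \, ds
\le e^{\e} \int_0^{i+1} e^{-\e(i+1-s)} \norm{B(s)}_{HS}^2 \, ds
\le e^{\e} \Theta_*^2.
\end{equation}
Taking the $p$-th power and combining with the previous display produces the claimed bound with the constant $e^{\e p}$.

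I do not expect any real obstacle here: the only point that deserves care is ensuring that the time-shifted integrand still satisfies the measurability hypothesis needed for Lemma \ref{p4:lem:prml:BDG}, which is immediate from progressive measurability of $B$. The factor $e^{\e}$ is precisely the cost of trading the exponentially weighted integral that hypothesis (hB) supplies for the flat integral required by the mild BDG inequality on a unit window, and this is the source of the $e^{\e p}$ in the final constant.
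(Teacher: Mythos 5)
Your proposal is correct and is essentially the same argument as the paper's: both apply the mild Burkholder--Davis--Gundy inequality \sref{p4:eq:prlm:bdg:mild} on the unit window to get $E[\Upsilon_B^{(i)}]^{2p}\le K_{\mathrm{cnv}}^{2p}p^p E[\int_i^{i+1}\norm{B(s)}_{HS}^2\,ds]^p$, and then use $e^{-\e(i+1-s)}\ge e^{-\e}$ on $[i,i+1]$ together with \sref{p4:eq:supb:assumB} to land on the weighted bound, paying the factor $e^{\e p}$. The only cosmetic difference is that you make the time-shift of the Wiener process and filtration explicit, whereas the paper applies the BDG lemma to the shifted window directly; the underlying step and constants are identical.
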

\begin{proof}
Applying Lemma \ref{p4:lem:prml:BDG}, we readily compute  
\begin{equation}
\begin{array}{lcl}
    E \left[\Upsilon_B^{(i)}\right]^{2p}
     &\leq& K_{\mathrm{cnv}}^{2p} p^p
     E \left[ \int_i^{i+1}\norm{B(s)}_{HS}^2  \, ds \right]^p
\\[0.2cm]
& \leq & 
      K_{\mathrm{cnv}}^{2p} p^p e^{\e p}
     E \left[ \int_0^{i+1}e^{-\e(i+1 - s)}\norm{B(s)}_{HS}^2  \, ds \right]^p ,
\end{array}
\end{equation}
which implies the stated bound on account of \sref{p4:eq:supb:assumB}. 
\end{proof}

\paragraph{Long-term bounds}
The goal here is to apply the chaining result from
Theorem \ref{p4:thm:supb:talagrand} to the long-term integral
$\eb^{\mathrm{lt}}$. To achieve this,
we will use Lemma \ref{p4:lem:prml:veraar} to turn
moment bounds for the increments
of $\eb^{\mathrm{lt}}$ into the desired
tail bounds for the associated probability distribution.

For any pair $0 \leq t_1 \leq t_2 \leq T$, we split this increment
into two parts
\begin{equation}
    \mathcal{E}_B^{\mathrm{lt}}(t_1) - \mathcal{E}_B^{\mathrm{lt}}(t_2)
    = 
    \mathcal{I}_1(t_1, t_2)
    + \mathcal{I}_2(t_1, t_2)
\end{equation}
that are defined by
\begin{equation}
\begin{array}{lcl}
    \mathcal{I}_1(t_1, t_2)
    & = & \int_0^{t_1 - 1} 
    \left[S(t_2 - s) - S(t_1 - s)\right] B(s) \, d W^Q_{s},
\\[0.3cm]
\mathcal{I}_2(t_1, t_2)
    & = & \int_{t_1 - 1}^{t_2 - 1} S(t_2 - s) B(s) \, d W^Q_{s}.
\end{array}
\end{equation}
The first of these can be analyzed by exploiting the regularity of the
semigroup $S(t-s)$ for $t-s\geq 1$, while the second 
requires a supremum bound on the `smoothened' process  $S(1)B$, hence explaining the assumption in equation \sref{p4:eq:supb:assumB}.

\begin{lem}
\label{p4:eq:st:eb:lt:i}
Pick any integer $T \geq 2$ and assume that (hB) holds.
Then for any $1 \leq t_1 \leq t_2 \leq T$ 
and any integer $p \geq 1$ we have the bound
\begin{equation}
E \norm{ \mathcal{I}_1(t_1, t_2)}_{L^2}^{2p}
\leq p^{p} K_{\mathrm{cnv}}^{2p} M^{4p} \Theta_*^{2p} \abs{t_2 - t_1}^{2p}.
\end{equation}
\end{lem}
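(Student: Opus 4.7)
The plan is to apply the standard Burkholder--Davis--Gundy inequality \sref{p4:eq:prlm:bdg} at the fixed time $t=t_1-1$ (dropping the supremum only strengthens it) to the Hilbert--Schmidt-valued integrand $\tilde B(s) = [S(t_2-s) - S(t_1-s)]B(s)$. This at once produces
\begin{equation*}
E\norm{\mathcal{I}_1(t_1,t_2)}_{L^2}^{2p} \;\leq\; K_{\mathrm{cnv}}^{2p}\, p^{p} \, E\Bigl[\int_0^{t_1-1}\norm{[S(t_2-s)-S(t_1-s)]\, B(s)}_{HS}^2 \, ds\Bigr]^p,
\end{equation*}
and reduces the task to the pathwise integrand bound
\begin{equation*}
\norm{[S(t_2-s)-S(t_1-s)]\, B(s)}_{HS}^2 \;\leq\; M^4\, |t_2-t_1|^2 \, e^{-2\beta(t_1-1-s)}\, \norm{B(s)}_{HS}^2.
\end{equation*}
Once that estimate is in hand, the $ds$-integration is absorbed directly by \sref{p4:eq:supb:assumB}, using the standing inequality $\varepsilon < \beta$ to convert $e^{-2\beta\tau}$ into the weaker $e^{-\varepsilon\tau}$ for $\tau \geq 0$.

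The crux of the proof is the algebraic identity underlying this pathwise bound. Writing $S(t_2-s) - S(t_1-s) = [S(t_2-t_1) - I]\, S(t_1-s)$ by the semigroup law, and then---since $s \leq t_1-1$---splitting off one unit of time via $S(t_1-s) = S(1)\, S(t_1-s-1)$, I would exploit the fact that $\Pi B(s) = B(s)$ (a direct consequence of \sref{p4:eq:supb:b:psi:tw}) together with the commutativity of $\Pi$ and $S(\cdot)$ to rearrange the operator product as
\begin{equation*}
[S(t_2-s)-S(t_1-s)]\, B(s) \;=\; \bigl([S(t_2-t_1)-I]\, S(1)\bigr)\,\bigl(S(t_1-s-1)\,\Pi\bigr)\, B(s).
\end{equation*}
The two composite factors are precisely the operators bounded in Lemma \ref{p4:lem:prml:semigroup}, yielding $M|t_2-t_1|$ and $M e^{-\beta(t_1-s-1)}$ respectively in $\mathcal{L}(L^2,L^2)$. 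Submultiplicativity of the Hilbert--Schmidt norm then delivers the required bound, since the factor $e^{2\beta}$ produced by squaring $e^{-\beta(t_1-s-1)} = e^{\beta}e^{-\beta(t_1-s)}$ cancels cleanly against the $e^{-2\beta}$ released in rewriting $e^{-2\beta(t_1-s)} = e^{-2\beta}e^{-2\beta(t_1-1-s)}$.

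The principal subtlety---the only step where one could easily go wrong---is that the two sources of smallness are incompatible in isolation: the factor $|t_2-t_1|$ is only available once $S(t_2-t_1)-I$ is composed with a full $S(1)$, while the exponential decay in $s$ required to close the $ds$-integral relies on the spectral gap encoded in $S(\cdot)\Pi$. The lower cutoff at $s = t_1-1$ built into the definition of $\mathcal{I}_1$ provides exactly the one unit of semigroup time needed to activate both mechanisms simultaneously. With this dissection in place, the final estimate $p^{p}\, K_{\mathrm{cnv}}^{2p}\, M^{4p}\, \Theta_*^{2p}\, |t_2-t_1|^{2p}$ follows by raising the pathwise bound to the $p$-th power and combining with the BDG prefactor.
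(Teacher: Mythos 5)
Your proof is correct and takes essentially the same route as the paper: the same factorization $[S(t_2-s)-S(t_1-s)] = [S(t_2-t_1)-I]\,S(1)\,S(t_1-1-s)$, the same two operator bounds from Lemma \ref{p4:lem:prml:semigroup}, the Burkholder--Davis--Gundy inequality, and condition (hB). The only cosmetic difference is the ordering: the paper first pulls the $s$-independent operator $[S(t_2-t_1)-I]S(1)$ out of the stochastic integral and then invokes the mild version \sref{p4:eq:prlm:bdg:naked}, whereas you apply the plain bound \sref{p4:eq:prlm:bdg} to the full integrand and bound its Hilbert--Schmidt norm pathwise afterwards; you also make the role of $\Pi$ (via $\Pi B = B$) explicit where the paper leaves it implicit.
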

\begin{proof}
Observe first that
\begin{equation}
\begin{array}{lcl}
E \norm{ \mathcal{I}_1(t_1, t_2)}_{L^2}^{2p}
& \leq &
\norm{ [S(t_2 - t_1) - I]S(1) }_{\mathcal{L}(L^2,L^2)}^{2p}
E
\norm{
  \int_0^{t_1-1}
  S(t_1 -1 - s) B(s) \, d W^Q_{s}
}_{L^2}^{2p}
\\[0.2cm]
& \leq & 
M^{2p} \abs{t_2 - t_1}^{2p}
E
\norm{
  \int_0^{t_1-1}
  S(t_1 -1 - s) B(s) \, d W^Q_{s}
}_{L^2}^{2p}.
\end{array}
\end{equation}
Applying \sref{p4:eq:prlm:bdg:naked}
with $T = t_1 - 1$, we find
\begin{equation}
\begin{array}{lcl}
E \norm{ \mathcal{I}_1(t_1, t_2)}_{L^2}^{2p}
& \leq &
p^p K_{\mathrm{cnv}}^{2p}
M^{2p} \abs{t_2 - t_1}^{2p}
E \left[
  \int_0^{t_1-1}
  \norm{
  S(t_1 -1 - s) B(s)
  }_{HS}^2 \, ds\right]^p
\\[0.4cm]
& \leq &
p^p K_{\mathrm{cnv}}^{2p}
M^{4p} \abs{t_2 - t_1}^{2p}
E \left[
  \int_0^{t_1-1}
  e^{-2 \beta(t_1 - 1 - s)}
  \norm{
    B(s)
  }_{HS}^2 \, ds \right]^p,
\end{array}
\end{equation}
which yields the stated bound in view of \sref{p4:eq:supb:assumB}.
\end{proof}

\begin{lem}
\label{p4:eq:st:eb:lt:ii}
Pick any integer $T \geq 2$ and assume that (hB) holds.
Then for any $1 \leq t_1 \leq t_2 \leq T$ 
and any integer $p \geq 1$ we have the bound
\begin{equation}
E \norm{ \mathcal{I}_2(t_1, t_2)}_{L^2}^{2p}
\leq p^p K_{\mathrm{cnv}}^{2p} M^{2p} \Theta_*^{2p} \abs{t_2 - t_1}^{p}.
\end{equation}
\end{lem}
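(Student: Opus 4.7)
The plan is to imitate the proof of Lemma \ref{p4:eq:st:eb:lt:i} but with the semigroup decomposition flipped: here the integration variable $s$ ranges over $[t_1-1, t_2-1]$, so the age $t_2 - s$ of the semigroup kernel always satisfies $t_2 - s \geq 1$, and this single unit of elapsed time is what we will exploit.

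\textbf{Step 1: Apply the mild BDG without supremum.} Since $\mathcal{I}_2(t_1,t_2)$ is a genuine stochastic convolution over a compact time interval (no outside supremum), I would apply Corollary \ref{p4:cor:prml:BDG} (with $t = t_2$ and the obvious trivial extension of $S(t_2-s)B(s)$ by zero on $[0,t_1-1]$) to obtain
\begin{equation}
  E \norm{ \mathcal{I}_2(t_1, t_2)}_{L^2}^{2p}
  \leq K_{\mathrm{cnv}}^{2p}\, p^p\, E\left[\int_{t_1 - 1}^{t_2 - 1} \norm{S(t_2 - s) B(s)}_{HS}^2 \, ds\right]^p.
\end{equation}

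\textbf{Step 2: Factor out one unit of time from the semigroup.} For each $s \in [t_1 - 1, t_2 - 1]$ we have $t_2 - s \geq 1$, so the semigroup property gives $S(t_2 - s) = S(t_2 - s - 1) S(1)$ with $t_2 - s - 1 \geq 0$. Combining the submultiplicativity of the Hilbert-Schmidt norm with the uniform semigroup bound $\|S(\tau)\|_{\mathcal{L}(L^2,L^2)} \leq M$ from Lemma \ref{p4:lem:prml:semigroup}, and the pathwise assumption $\|S(1) B(s)\|_{HS}^2 \leq \Theta_*^2$ from (hB), I obtain pointwise in $s$ and $\omega$ the estimate
\begin{equation}
  \norm{S(t_2 - s) B(s)}_{HS}^2 \leq \norm{S(t_2 - s - 1)}_{\mathcal{L}(L^2,L^2)}^2 \norm{S(1) B(s)}_{HS}^2 \leq M^2 \Theta_*^2.
\end{equation}

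\textbf{Step 3: Integrate and conclude.} Integrating over the interval of length $t_2 - t_1$ yields the deterministic bound
\begin{equation}
  \int_{t_1 - 1}^{t_2 - 1} \norm{S(t_2 - s) B(s)}_{HS}^2 \, ds \leq M^2 \Theta_*^2 \, |t_2 - t_1|,
\end{equation}
so the expectation on the right-hand side of Step 1 is also bounded by $M^{2p} \Theta_*^{2p} |t_2 - t_1|^p$, producing the claimed estimate.

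There is no real obstacle here: the statement is a small companion to Lemma \ref{p4:eq:st:eb:lt:i} and is what forces the second pathwise bound in (hB). The only subtle point worth stating clearly is why we gain the linear factor $|t_2 - t_1|$ rather than an exponential decay in $t_2 - t_1$: because $s$ can be arbitrarily close to $t_2 - 1$, the kernel $S(t_2 - s - 1)$ can equal the identity, and we cannot extract a decay like $e^{-\beta(t_2 - s - 1)}$ (even using $\Pi B = B$ from (hB)) uniformly in $s$; the only gain available comes from the length of the integration interval itself.
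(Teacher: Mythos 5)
Your proof is correct and follows the same strategy as the paper: apply Corollary \ref{p4:cor:prml:BDG}, split off one unit of time via $S(t_2-s)=S(t_2-s-1)S(1)$, use submultiplicativity of the Hilbert--Schmidt norm together with $\|S(\cdot)\|_{\mathcal{L}(L^2,L^2)}\leq M$ and the second pathwise bound in (hB), and integrate over the interval of length $|t_2-t_1|$. The only cosmetic difference is that the paper extracts $S(1)$ from the integrand before invoking the Burkholder--Davis--Gundy inequality rather than afterwards; the resulting estimate is identical.
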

\begin{proof}
It suffices to compute
\begin{equation}
\begin{array}{lcl}
E \norm{ \mathcal{I}_2(t_1, t_2)}_{L^2}^{2p}
& = &
E \left[
\norm{
  \int_{t_1-1}^{t_2 -1}
  S(t_2 -1 - s) S(1) B(s)
  \, dW^Q_{s}
  }_{L^2}\right]^{2p}
\\[0.3cm]
& \leq &
p^p K_{\mathrm{cnv}}^{2p}
E \left[
  \int_{t_1-1}^{t_2-1}
  \norm{S(t_2 -1 - s)}_{\mathcal{L}(L^2,L^2)}^2
  \norm{S(1)B(s)}_{HS}^2 \, ds\right]^p
\\[0.3cm]
& \leq &
p^p K_{\mathrm{cnv}}^{2p}
M^{2p} \abs{t_2 - t_1}^{p} E\left[
  \sup_{t_1 - 1 \leq s \leq t_2 - 1} \norm{S(1) B(s)}_{HS}^2\right]^p 
\end{array}
\end{equation}
and apply \sref{p4:eq:supb:assumB}.
\end{proof}

The previous two results were tailored to handle small
increments $\abs{t_2 - t_1} \leq 1$. For larger increments
one can exploit the decay of the semigroup to show that
$\mathcal{E}^{\mathrm{lt}}_B$ remains bounded in expectation.

\begin{lem} Pick any integer $T \geq 2$ and assume that (hB) holds.
\label{p4:eq:st:eb:lt:glb}
Then for any $0 \leq t \leq T$ and any integer $p \geq 1$ we have the bound
\begin{equation}
    E 
    \norm{\mathcal{E}_B^{\mathrm{lt}}(t)}_{L^2}^{2p}
    \le
    p^p K_{\mathrm{cnv}}^{2p}
     M^{2p} \Theta_*^{2p}.
\end{equation}
\end{lem}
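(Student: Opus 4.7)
The plan is to apply the mild Burkholder-Davis-Gundy inequality \emph{without} the supremum (Corollary \ref{p4:cor:prml:BDG}) directly to $\mathcal{E}_B^{\mathrm{lt}}(t)$ and then exploit the exponential decay of $S(\tau)\Pi$ on the time-window $[0, t-1]$, which is bounded away from the singularity at $\tau = 0$. Crucially, the condition \sref{p4:eq:supb:b:psi:tw} means $P_{\mathrm{tw}} B(s) = 0$, i.e.\ $B(s) = \Pi B(s)$, so all applications of $S(t-s)$ against $B(s)$ factor through the projected semigroup $S(t-s)\Pi$ for which the bounds in Lemma \ref{p4:lem:prml:semigroup} are available.

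Concretely, I would first invoke Corollary \ref{p4:cor:prml:BDG} on $[0, t-1]$ to obtain
\begin{equation*}
  E \nrm{\mathcal{E}_B^{\mathrm{lt}}(t)}_{L^2}^{2p}
   \le K_{\mathrm{cnv}}^{2p} p^p \, E \Bigl[ \int_0^{t-1} \nrm{S(t-s)\Pi B(s)}_{HS}^2 \, ds \Bigr]^p.
\end{equation*}
Next, using the ideal property $\nrm{AK}_{HS}\le \nrm{A}_{\L(L^2,L^2)}\nrm{K}_{HS}$ together with the operator-norm bound $\nrm{S(t-s)\Pi}_{\L(L^2,L^2)} \le M e^{-\beta(t-s)}$ from \sref{p4:eq:prlm:semigroup:bnds:i}, the integrand is bounded by $M^2 e^{-2\beta(t-s)}\nrm{B(s)}_{HS}^2$. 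Since $t-s \ge 1$ on the range of integration, there is no difficulty with integrability near $s=t$.

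Finally, because we have fixed $\varepsilon \in (0,\beta)$, we have $2\beta \geq \varepsilon$, so $e^{-2\beta(t-s)} \le e^{-\varepsilon(t-s)}$, and the pathwise assumption \sref{p4:eq:supb:assumB} immediately gives
\begin{equation*}
  \int_0^{t-1} e^{-2\beta(t-s)}\nrm{B(s)}_{HS}^2 \, ds
   \le \int_0^t e^{-\varepsilon(t-s)}\nrm{B(s)}_{HS}^2 \, ds \le \Theta_*^2
\end{equation*}
almost surely. Raising to the $p$-th power, taking expectations, and combining with the previous display yields the claimed bound with no randomness left to average.

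The proof is essentially routine once the key observations are in place; there is no real obstacle. The only subtlety worth highlighting is the decision to forgo the supremum at this step: applying the supremum form of mild BDG would produce a factor that grows with $t$, whereas Corollary \ref{p4:cor:prml:BDG} loses the supremum but gains a pathwise-bounded integrand that combines cleanly with the exponential decay of $S(\cdot)\Pi$. This is precisely the trade-off that will be exploited in the chaining argument of the next subsection, where the supremum over $t$ is recovered through tail bounds on the increments rather than through a pointwise-in-time inequality.
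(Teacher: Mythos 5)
Your argument matches the paper's proof step for step: apply Corollary \ref{p4:cor:prml:BDG}, insert the projection via $B=\Pi B$, use the decay bound $\norm{S(\tau)\Pi}_{\L(L^2,L^2)}\le Me^{-\beta\tau}$, and invoke the pathwise hypothesis \sref{p4:eq:supb:assumB} using $2\beta>\varepsilon$. The only cosmetic difference is that you compare $e^{-2\beta(t-s)}$ directly to $e^{-\varepsilon(t-s)}$ and extend the integral to $[0,t]$, whereas the paper weakens to $e^{-2\beta(t-1-s)}$ and applies the hypothesis at time $t-1$; both are equivalent.
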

\begin{proof}
Using Corollary \ref{p4:cor:prml:BDG}, we find
\begin{equation}
\begin{array}{lcl}
E \norm{\mathcal{E}_B^{\mathrm{lt}}(t)}_{L^2}^{2p}
& \leq &
p^p K_{\mathrm{cnv}}^{2p}
E \left[
  \int_0^{t-1}
  \norm{S(t -s)\Pi}_{\mathcal{L}(L^2,L^2)}^2
  \norm{B(s)}_{HS}^2 \, ds\right]^p
\\[0.4cm]
& \leq &
p^p K_{\mathrm{cnv}}^{2p}
M^{2p} 
E \left[ 
  \int_0^{t-1}
  e^{-2 \beta(t - 1 - s)}  
  \norm{ B(s)}_{HS}^2ds
  \right]^p
\\[0.4cm]
& \leq &
p^p K_{\mathrm{cnv}}^{2p}
M^{2p} \Theta_*^{2p}.
\end{array}
\end{equation}
\end{proof}

\begin{cor}
\label{p4:cor:st:eb:lt:full:increment}
Pick any integer $T \geq 2$ and assume that (hB) holds.
Then for any $0 \leq t_1 \leq t_2 \leq T$ and any integer $p \geq 1$
we have the bound
\begin{equation}
    E 
    \norm{\mathcal{E}_B^{\mathrm{lt}}(t_1)
    -
    \mathcal{E}_B^{\mathrm{lt}}(t_2)
    }_{L^2}^{2p}
    \le
     2^{2p} p^p K_{\mathrm{cnv}}^{2p}M^{4p} \Theta_*^{2p}
     \min\{ \abs{t_2-t_1}^{1/2} , 1 \}^{2p}.
\end{equation}
\end{cor}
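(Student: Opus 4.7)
The plan is to control the $L^2$-norm of the increment $\mathcal{E}_B^{\mathrm{lt}}(t_2) - \mathcal{E}_B^{\mathrm{lt}}(t_1)$ by splitting into two separate regimes depending on the size of $|t_2 - t_1|$, leveraging the three preceding lemmas that have been set up for exactly this purpose.

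First, I would handle the \emph{small-increment regime} $|t_2 - t_1| \le 1$. Writing
\begin{equation}
\mathcal{E}_B^{\mathrm{lt}}(t_2) - \mathcal{E}_B^{\mathrm{lt}}(t_1) = \mathcal{I}_1(t_1,t_2) + \mathcal{I}_2(t_1,t_2)
\end{equation}
as given in the text, I would apply the elementary bound $(a+b)^{2p} \le 2^{2p-1}(a^{2p}+b^{2p})$ and invoke Lemma \ref{p4:eq:st:eb:lt:i} and Lemma \ref{p4:eq:st:eb:lt:ii} respectively. The former yields a factor $|t_2-t_1|^{2p}$ and the latter a factor $|t_2-t_1|^p$; since $|t_2-t_1|\le 1$ we have $|t_2-t_1|^{2p}\le |t_2-t_1|^p$, so both terms collapse into the desired $|t_2-t_1|^p = (|t_2-t_1|^{1/2})^{2p}$ factor, which equals $\min\{|t_2-t_1|^{1/2},1\}^{2p}$ in this regime. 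Collecting the constants gives a bound of the form $2^{2p} p^p K_{\mathrm{cnv}}^{2p} M^{4p} \Theta_*^{2p}$.

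Next, I would handle the \emph{large-increment regime} $|t_2 - t_1| > 1$, where $\min\{|t_2-t_1|^{1/2},1\}^{2p} = 1$. Here the decomposition via $\mathcal{I}_1, \mathcal{I}_2$ is no longer helpful (and Lemma \ref{p4:eq:st:eb:lt:ii} would actually degrade), so instead I would simply use the triangle inequality
\begin{equation}
\norm{\mathcal{E}_B^{\mathrm{lt}}(t_1) - \mathcal{E}_B^{\mathrm{lt}}(t_2)}_{L^2} \le \norm{\mathcal{E}_B^{\mathrm{lt}}(t_1)}_{L^2} + \norm{\mathcal{E}_B^{\mathrm{lt}}(t_2)}_{L^2},
\end{equation}
raise to the power $2p$, and apply Lemma \ref{p4:eq:st:eb:lt:glb} to each piece. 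This uniform-in-$t$ bound costs no growth factor and fits within the stated constants.

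The only subtlety, and the main obstacle to a clean write-up, is the edge case $t_1 < 1$ (and similarly $t_2 < 1$), where the lemmas are stated under the assumption $t_1 \ge 1$. I would handle this by recalling the convention $t-1 \mapsto \max\{t-1,0\}$: when $t_1 < 1$ the lower limit of $\mathcal{I}_1$ collapses to $0$, making $\mathcal{I}_1 \equiv 0$, and $\mathcal{I}_2$ reduces to $\int_0^{t_2-1} S(t_2-s) B(s)\,dW^Q_s$. The same argument as in Lemma \ref{p4:eq:st:eb:lt:ii} still applies, and since $t_2 - 1 \le t_2 - t_1$ (because $t_1 \le 1$), the resulting factor $(t_2-1)^p$ is controlled by $|t_2-t_1|^p$, matching the small-increment bound. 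With this observation the case analysis is complete and the stated constants can be taken uniformly, completing the proof.
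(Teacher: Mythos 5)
Your proposal is correct and follows essentially the same route the paper intends: split according to whether $|t_2-t_1|\le 1$ (using Lemmas \ref{p4:eq:st:eb:lt:i}--\ref{p4:eq:st:eb:lt:ii} via the $\mathcal{I}_1,\mathcal{I}_2$ decomposition) or $|t_2-t_1|>1$ (using the uniform bound of Lemma \ref{p4:eq:st:eb:lt:glb}), combine with $(a+b)^{2p}\le 2^{2p-1}(a^{2p}+b^{2p})$, and absorb $M^{2p}\le M^{4p}$. Your explicit treatment of the edge case $t_1<1$, where the hypotheses of Lemmas \ref{p4:eq:st:eb:lt:i}--\ref{p4:eq:st:eb:lt:ii} do not directly apply, is a detail the paper's one-line proof glosses over, and you have filled it in correctly.
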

\begin{proof}
This follows from the standard inequality
$(a+ b)^{2p} \leq 2^{2p - 1}( a^{2p} + b^{2p} )$ and
a combination of the estimates from Lemmas
\ref{p4:eq:st:eb:lt:i}-\ref{p4:eq:st:eb:lt:glb}.
\end{proof}

\begin{lem}
There exists a constant
$K_\mathrm{lt}\geq 1$ 
so that for any integer $T \geq 2$,
any process $B$ that satisfies (hB)
and any $p \ge 1$,
we have the supremum bound 
\begin{align}
\label{p4:lem:st:eb:lt:chn:bnd}
    E\sup_{0\leq t\leq T} \nrm{\mathcal{E}_B^\mathrm{lt}(t)}_{L^2}^{2p} \leq \big( \ln(T)^p + p^p\big) K_{\mathrm{lt}}^{2p} \Theta_*^{2p} .
\end{align}
\end{lem}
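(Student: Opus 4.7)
The plan is to apply the chaining bound of Theorem \ref{p4:thm:supb:talagrand} directly to the process $\mathcal{E}_B^{\mathrm{lt}}$ on the interval $[0,T]$. The two ingredients we need are a usable metric $d$ on $[0,T]$ governing the increments, and a subsequent estimate of the corresponding covering number $N(T,d,\nu)$ and diameter.

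First I would extract such a metric from Corollary \ref{p4:cor:st:eb:lt:full:increment}. For every $0\leq t_1\leq t_2\leq T$ and every integer $p\geq 1$ the increment $Z = \norm{\mathcal{E}_B^{\mathrm{lt}}(t_1)-\mathcal{E}_B^{\mathrm{lt}}(t_2)}_{L^2}$ satisfies
\begin{equation}
  E[Z^{2p}] \leq p^p \Theta_{t_1,t_2}^{2p},
  \qquad
  \Theta_{t_1,t_2} := 2K_{\mathrm{cnv}} M^2 \Theta_* \min\{\abs{t_2-t_1}^{1/2},1\}.
\end{equation}
By Lemma \ref{p4:lem:prml:veraar} this converts into the sub-Gaussian tail bound
\begin{equation}
  P\bigl(\norm{\mathcal{E}_B^{\mathrm{lt}}(t_1)-\mathcal{E}_B^{\mathrm{lt}}(t_2)}_{L^2}>\vartheta\bigr)
  \leq 2\exp\Bigl[-\frac{\vartheta^{2}}{2d(t_1,t_2)^{2}}\Bigr],
\end{equation}
with the metric
\begin{equation}
  d(t_1,t_2) := \sqrt{e}\,\Theta_{t_1,t_2}
  = K_{d}\,\Theta_*\,\min\{\abs{t_2-t_1}^{1/2},1\},
\end{equation}
for a suitable numerical constant $K_{d}$ independent of $T$ and $B$. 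This places us in the setting of Theorem \ref{p4:thm:supb:talagrand}; continuity of the paths of $\mathcal{E}_B^{\mathrm{lt}}$ is a standard consequence of \sref{p4:eq:prlm:bdg:mild}.

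Next I would estimate the two geometric quantities that enter Dirksen's bound. The diameter of $[0,T]$ in the metric $d$ is at most $K_{d}\Theta_*$. For the covering number, notice that $d(t_1,t_2)\leq\nu$ is implied by $\abs{t_2-t_1}\leq (\nu/(K_{d}\Theta_*))^{2}$ whenever $\nu\leq K_{d}\Theta_*$, and is automatic for $\nu\geq K_{d}\Theta_*$. Thus
\begin{equation}
  N(T,d,\nu) \leq
  \begin{cases}
    1 & \nu\geq K_{d}\Theta_*,\\[0.15cm]
    \lceil T(K_{d}\Theta_*)^{2}/\nu^{2}\rceil & 0<\nu<K_{d}\Theta_*.
  \end{cases}
\end{equation}
A direct computation of the Dudley integral then gives, for $T\geq 2$,
\begin{equation}
  \int_0^{\infty}\!\sqrt{\ln N(T,d,\nu)}\,d\nu
  \leq \int_0^{K_{d}\Theta_*}\!\sqrt{\ln\bigl(2 T (K_{d}\Theta_*)^{2}/\nu^{2}\bigr)}\,d\nu
  \leq C\,\Theta_*\sqrt{\ln T},
\end{equation}
where the last step is the standard Gaussian-type integral $\int_0^{1}\sqrt{\ln(a/u^{2})}\,du = O(\sqrt{\ln a})$ (valid for $a\geq 2$) after the change of variable $u=\nu/(K_{d}\Theta_*)$. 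Plugging both bounds into Theorem \ref{p4:thm:supb:talagrand} yields
\begin{equation}
  E\sup_{0\leq t\leq T}\nrm{\mathcal{E}_B^{\mathrm{lt}}(t)}_{L^2}^{2p}
  \leq C_{\mathrm{ch}}^{2p}\bigl(C\,\Theta_*\sqrt{\ln T}\bigr)^{2p}
    + C_{\mathrm{ch}}^{2p} p^{p}(K_{d}\Theta_*)^{2p},
\end{equation}
which, after absorbing constants into a single $K_{\mathrm{lt}}$, is exactly \sref{p4:lem:st:eb:lt:chn:bnd}.

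The only delicate point I anticipate is the bookkeeping around the Dudley integral: one must verify that the logarithmic factor really behaves like $\sqrt{\ln T}$ uniformly in $\Theta_*$ (i.e., that the factor $\Theta_*$ factors cleanly out of the integrand under the substitution), and one must also check that Corollary \ref{p4:cor:st:eb:lt:full:increment} can be used at $t_1<1$ where $\mathcal{E}_B^{\mathrm{lt}}$ was defined with the convention $t-1\mapsto\max\{t-1,0\}$. Both are routine once the substitution $u=\nu/(K_d\Theta_*)$ is in place and one observes that for $t_1<1$ the increment bound is even easier, since $\mathcal{E}_B^{\mathrm{lt}}$ is constant (equal to $0$) on $[0,1]$.
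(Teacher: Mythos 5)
Your proposal is correct and follows essentially the same route as the paper: extract the sub-Gaussian metric $d(t_1,t_2)=\sqrt{e}\,\Theta_{t_1,t_2}$ from Corollary \ref{p4:cor:st:eb:lt:full:increment} via Lemma \ref{p4:lem:prml:veraar}, bound the covering number, evaluate the Dudley integral, and invoke Theorem \ref{p4:thm:supb:talagrand}. The only cosmetic difference is that the paper evaluates the Dudley integral in closed form (yielding a $\sqrt{T}\,\mathrm{erfc}(\sqrt{\ln T})$ term that is then bounded for $T\geq 2$), whereas you estimate it directly; both give the same $\Theta_*\sqrt{\ln T}$ scaling, with $\Theta_*$ factoring out cleanly under the substitution as you note.
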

\begin{proof}
Upon writing $d_{\max} = 2 \sqrt{e} K_{\mathrm{cnv}} M^2 \Theta_*$
together with
\begin{equation}
    d(t_1, t_2)
    = d_{\max} \min\{ \sqrt{|t_2 - t_1|} , 1 \},
\end{equation}
an application of Lemma \ref{p4:lem:prml:veraar} to Corollary \ref{p4:cor:st:eb:lt:full:increment} provides the bound
\begin{equation}
\begin{array}{lcl}
    P\left( \norm{\mathcal{E}_B^{\mathrm{lt}}(t_1)
    -
    \mathcal{E}_B^{\mathrm{lt}}(t_2)
    }_{L^2} > \vartheta \right)
& \leq & 
2\,\mathrm{exp}\left[
      - \frac{\vartheta^2}{ 2d(t_1,t_2)^2 }
    \right].
\end{array}
\end{equation}
Turning to the packing number $N(T,d,\nu)$
introduced in Theorem \ref{p4:thm:supb:talagrand},
we note that 
$N(T,d,\nu) = 1$
whenever $\nu \geq d_{\max}$,
while for 
smaller $\nu$ we have
\begin{equation}
    N(T, d, \nu)  \leq \frac{T d^2_{\max}}{\nu^2}.
\end{equation}
In particular, 
the Dudley entropy integral can be bounded by
\begin{align}\begin{split}
    \int_0^{\infty}
    \sqrt{\ln(N(T,d,\nu) ) } \, d \nu     \leq &
    \int_0^{d_{\max}}
    \sqrt{\ln (T d_{\max}^2 / \nu^2)  } \, d \nu\\[0.2cm]
    = &\int_0^{d_{\max}}\sqrt{-2\ln \big( \nu / ( d_{\max}\sqrt{T}) \big)}
    \, d \nu\\[0.2cm]
    =&\, d_\mathrm{max}\sqrt{T}\int_0^{1/\sqrt T}
    \sqrt{-2\ln(\nu) } \, d \nu\\[0.2cm]
    =&\,d_\mathrm{max}\left(\sqrt{2\ln(T) }+\sqrt{\pi}\sqrt{T}\text{erfc}\big(\sqrt{\ln(T) }\big)\right).
\end{split}
\end{align}
Since the function $\sqrt{T}\text{erfc}(\sqrt{\ln(T)})$ 
is uniformly bounded for $T \geq 2$,
the desired estimate
now follows directly from Theorem \ref{p4:thm:supb:talagrand}.
\end{proof}

\begin{proof}[Proof of \sref{p4:eq:supb:mr:eb}
in Proposition \ref{p4:prop:supb:mr}]
Applying Corollary \ref{p4:cor:prml:max}
to the estimates  \sref{p4:eq:st:bnd:sup:eb:sh:vs:max}-\sref{p4:eq:st:bnd:sup:eb:ups:bnd},
we directly find
\begin{align}
    E\sup_{0\leq t\leq T} \nrm{\eb^{\mathrm{sh}}(t)}^{2p}_{L^2}\leq
    \big(\ln(T)^p + p^p \big)\big(72 M^2 eK^2_\mathrm{cnv}e^{\e}\Theta^2_*\big)^p.
\end{align}
Combining this with the analogous long-term estimate \sref{p4:lem:st:eb:lt:chn:bnd}
readily yields the result.
\end{proof}

\subsection{Estimates for $\mathcal{I}^{\mspace{1mu}\mathrm{s}}_B$}
\label{p4:sec:supb:ib}

Our strategy here for controlling $\mathcal{I}^{\mspace{1mu}\mathrm{s}}_B$ is to appeal to Corollary \ref{p4:cor:prml:max},
which requires us to obtain moment bounds on $\mathcal{I}^{\mspace{1mu}\mathrm{s}}_B(i)$.
As a preparation, we switch the order of integration using a stochastic Fubini theorem \cite[Thm. 4.33]{dapratozab}, to find
\begin{equation}
\begin{array}{lcl}
    \mathcal{I}^{\mspace{1mu}\mathrm{s}}_B(i)
     &= & \int_0^i e^{-\e(i-s)}
    \int_0^{s}\ip{S(s-s') V(s'),S(s-s')B(s') \cdot  }_{H^1} \, dW^Q_{s'} \, ds 
\\[0.3cm]
    &= & \int_0^{i} \int_{s'}^{i}
    e^{-\e(i-s)}
    \ip{S(s-s') V(s'),S(s-s')B(s') \cdot  }_{H^1} \,ds\, dW^Q_{s'} .
\end{array}
\end{equation}
Corollary \ref{p4:cor:prml:BDG} hence yields
\begin{align}
\begin{split}
\label{p4:eq:st:bnd:i:for:i:b:s}
    E [\mathcal{I}^{\mspace{1mu}\mathrm{s}}_B(i)]^{2p}
    &\leq p^pK_{\mathrm{cnv}}^{2p}E\left[\int_0^{i}\sum_{k=0}^\infty
       \mathcal{K}^{(i)}_k(s')^2 ds'\right]^{p},\\[0.2cm]
    \end{split}
\end{align}
in which we have introduced the expression\footnote{Note that this integral is an improper integral, as the integrand is not defined for the lower boundary $s=s'$. In \cite{hamster2017} we show how this problem can be circumvented by replacing $s$ by $s+\d$ and subsequently sending $\d\to0$.}
\begin{equation}
\label{p4:eq:st:def:cal:k:i}
    \mathcal{K}^{(i)}_{k}(s') = \int_{s'}^{i}e^{-\e(i-s)}\ip{S(s-s')V(s'),S(s-s')B(s')\sqrt Qe_k}_{H^1}ds.
\end{equation}
Motivated by \sref{p4:eq:supb:ib:splitip}, we split
$\mathcal{K}^{(i)}_k$ into the two parts 
\begin{equation}
\begin{array}{lcl}
    \mathcal{K}^{(i)}_{I;k}(s') &=&\int_{s'}^{i}e^{-\e(i-s)}\mathcal{J}_\mathrm{tw}[V(s'),B(s')\sqrt Qe_k]ds  ,
    \\[0.2cm]
    \mathcal{K}^{(i)}_{II;k}(s') &=&-\frac{1}{2\rho}\int_{s'}^{i}e^{-\e(i-s)}\frac{d}{ds}\ip{S(s-s')V(s'),S(s-s')B(s')\sqrt Qe_k}_{L^2}ds.
\end{array}
\end{equation}
Performing an integration by parts, the second of these integrals can be further decomposed
into the three terms
\begin{equation}
\begin{array}{lcl}
\mathcal{K}^{(i)}_{IIa;k}(s')
& = & -\frac{\e}{2\rho} \int_{s'}^{i} e^{-\e(i-s    )}\ip{S(s-s')V(s'),S(s-s')B(s')\sqrt Qe_k}_{L^2}ds ,
\\[0.4cm]
\mathcal{K}^{(i)}_{IIb;k}(s')
& = & -\frac{1}{2\rho}\ip{S(i-s')V(s'),S(i-s')B(s')\sqrt Qe_k}_{L^2} ,
\\[0.4cm]
\mathcal{K}^{(i)}_{IIc;k}(s')
& = & \frac{1}{2\rho}e^{-\e(i-s')}
    \ip{V(s'),B(s')\sqrt Qe_k}_{L^2}.
\\[0.4cm]
\end{array}
\end{equation}

\begin{lem}
\label{p4:lem:st:bnds:cal:k:sq}
There exists a constant $K_{\mathcal{K}} \ge 1$ 
so that for any integer $T \geq 2$,
any pair of processes $(B,V)$ that satisfies (hB) and (hV)
and
any $i \in \{1, \ldots, T\}$,  we have the bound
\begin{equation}
\begin{array}{lcl}  
\sum_{k}
\mathcal{K}^{(i)}_{\#;k}(s') ^2
& \leq & K_{\mathcal{K}} e^{-2 \e (i-s')} \nrm{V(s')}^2_{L^2}\nrm{B(s')}^2_{HS}
\end{array}
\end{equation}
for all $0 \leq s' \leq i$ and each
of the symbols $\# \in \{I, IIa, IIb, IIc\}$.
\end{lem}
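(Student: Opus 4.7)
The plan is to treat each of the four symbols $\# \in \{I, IIa, IIb, IIc\}$ separately, but in all cases to exploit two common ingredients: (i) that both $V(s')$ and $B(s')f_k$ (with $f_k := \sqrt Q e_k$) are orthogonal to $\psi_{\mathrm{tw}}$ by hypotheses (hV) and (hB), so that we may effectively insert the projection $\Pi$ and invoke the decay estimates of Lemma \ref{p4:lem:prml:semigroup}; and (ii) that summation over $k$ of squared inner products $\langle u, Cf_k\rangle_{L^2}^2$ is controlled by Cauchy-Schwarz term-by-term, giving $\sum_k \langle u, Cf_k\rangle^2 \le \|u\|_{L^2}^2 \|C\|_{HS}^2$. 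The target decay rate $e^{-2\e(i-s')}$ will always be obtained either directly from the exponential weight in the definition, or by majorising semigroup decay $e^{-2\beta t}$ by $e^{-2\e t}$ (since $\e<\beta$).

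For the boundary term $\# = IIc$, the estimate is essentially immediate: the factor $e^{-\e(i-s')}$ is explicit, and summing $\langle V(s'), B(s')f_k\rangle^2$ over $k$ yields $\|V(s')\|_{L^2}^2 \|B(s')\|_{HS}^2$. For the boundary term $\# = IIb$, I would insert $\Pi$ into each factor of $\langle S(i-s')V, S(i-s')Bf_k\rangle$, apply the bound $\|S(t)\Pi\|_{\mathcal{L}(L^2,L^2)} \le Me^{-\beta t}$ to each, and use Cauchy-Schwarz to obtain a prefactor $M^4 e^{-4\beta(i-s')}$, which is dominated by $e^{-2\e(i-s')}$ since $\e < \beta$.

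For the integral $\# = IIa$, I would pull absolute values inside the integral, use $|\langle S(s-s')V, S(s-s')Bf_k\rangle| \le M^2 e^{-2\beta(s-s')} \|V(s')\|_{L^2} \|B(s')f_k\|_{L^2}$, then compute the elementary integral
\begin{equation}
\int_{s'}^{i} e^{-\e(i-s)} e^{-2\beta(s-s')} \, ds = e^{-\e(i-s')} \int_0^{i-s'} e^{-(2\beta-\e)u} \, du \le \frac{e^{-\e(i-s')}}{2\beta-\e},
\end{equation}
and then sum over $k$ using Cauchy-Schwarz and the Hilbert-Schmidt identity.

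The case $\# = I$ is the delicate one, since the bound \sref{p4:eq:supb:ib:Bbound} on $\mathcal{J}_{\mathrm{tw}}$ contains a singular factor $(s-s')^{-1/2}$. The key observation is that this singularity is integrable at $s=s'$, and the accompanying exponential decay $e^{-2\beta(s-s')}$ makes the integral finite at infinity, so
\begin{equation}
\int_{s'}^{i} e^{-\e(i-s)} e^{-2\beta(s-s')}\bigl(1+\rho^{-1}(s-s')^{-1/2}\bigr) ds \le e^{-\e(i-s')}\int_0^\infty e^{-(2\beta-\e)u}\bigl(1+\rho^{-1}u^{-1/2}\bigr)du,
\end{equation}
with the right-hand integral a finite constant $C=C(\rho,\beta,\e)$. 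Squaring, applying Cauchy-Schwarz against $f_k$, and summing then produces the bound $C^2 M^4 e^{-2\e(i-s')}\|V(s')\|_{L^2}^2\|B(s')\|_{HS}^2$. Taking $K_{\mathcal{K}}$ to be the maximum of the four constants obtained above concludes the proof. The main (mild) obstacle is simply bookkeeping in the $\# = I$ case, to verify that the time-integrability of the $(s-s')^{-1/2}$ singularity is not destroyed when combined with the weighted exponential, and that this produces the sharp decay rate $e^{-2\e(i-s')}$ rather than something weaker.
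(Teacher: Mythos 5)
Your proof is correct and follows essentially the same route as the paper's: insert $\Pi$ via the orthogonality conditions (hV)--(hB), apply Lemma \ref{p4:lem:prml:semigroup} and the bound \sref{p4:eq:supb:ib:Bbound}, extend the time integral to $\int_0^\infty$ to absorb the $(s-s')^{-1/2}$ singularity into a constant while keeping the $e^{-\e(i-s')}$ decay, and sum over $k$ by Cauchy--Schwarz and the Hilbert--Schmidt identity. The only cosmetic difference is that the paper packages the common integral into the auxiliary quantity $K(\e,\beta)$ defined in \sref{p4:eq:st:def:k:eps:beta}, whereas you carry the integral explicitly case by case.
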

\begin{proof}
Upon introducing the expression
\begin{equation}
\label{p4:eq:st:def:k:eps:beta}
K(\e, \beta) =
  e^{\e(i-s')}
  \int_{s'}^{i}e^{-\e(i-s)}e^{-2\beta(s-s')}\left(1+\rho^{-1}(s-s')^{-1/2}\right)ds
\end{equation}
we may exploit \sref{p4:eq:supb:ib:Bbound} to obtain the bound
\begin{equation}
\begin{array}{lcl}
    \sum_{k} \mathcal{K}^{(i)}_{I;k}(s') ^2 
    & \le&   M^2 e^{-2\e(i - s')} \sum_{k}
    \nrm{V(s')}^2_{L^2}\nrm{B(s')\sqrt Q e_k}^2_{L^2}
       K(\e, \beta)^2
\\[0.2cm]
& = & M^2 
    e^{-2\e(i - s')} \nrm{V(s')}^2_{L^2}\nrm{B(s')}^2_{HS}
      K(\e, \beta)^2.
\end{array}    
\end{equation}
The estimate for $\# = I$ hence follows from the computation
\begin{equation}
\begin{array}{lclcl}
    K(\e, \beta)
    &\leq & 
      \int_{0}^{\infty}e^{(\e-2\beta)s}\left(1+\rho^{-1}s^{-1/2}\right)ds
    &= &\frac{1}{2\beta-\e}+\frac{1}{\rho}\sqrt{\frac{\pi}{2\b-\e}}.
\end{array}    
\end{equation}
The estimate for $\mathcal{K}^{(i)}_{IIa;k}$ can be obtained in the same fashion,
but here the $(s-s')^{-1/2}$ term in \sref{p4:eq:st:def:k:eps:beta}
is not required. Finally, the estimates for  $\mathcal{K}^{(i)}_{IIb;k}$
and $\mathcal{K}^{(i)}_{IIc;k}$ are immediate from Lemma \ref{p4:lem:prml:semigroup}
and the choice $\beta > \e$.
\end{proof}

\begin{proof}[Proof of \sref{p4:eq:supb:mr:ib} in Proposition \ref{p4:prop:supb:mr}]
Applying Young's inequality to the decomposition above,
we obtain the pathwise bound
\begin{equation}
\begin{array}{lclcl}
\int_0^{i}\sum_{k}
       \mathcal{K}^{(i)}_k(s')^2 ds'
& \leq &  16 K_\mathcal{K} \int_0^i e^{-2\e(i-s')}
\nrm{V(s')}^2_{L^2}\nrm{B(s')}^2_{HS}\, ds'
& \leq & 
 16 K_\mathcal{K} \Lambda_*^2 \Theta_*^2.
\end{array}
\end{equation}
In view of \sref{p4:eq:st:bnd:i:for:i:b:s} this implies
\begin{equation}
\begin{array}{lcl}
    E [\mathcal{I}^{\mspace{1mu}\mathrm{s}}_B(i)]^{2p}
    &\leq & 2^{4p}  p^p K_\mathcal{K}^p  K_{\mathrm{cnv}}^{2p}
    \Lambda_*^{2p} \Theta_*^{2p},
\end{array}
\end{equation}
which leads to the desired bound 
by
exploiting Corollary \ref{p4:cor:prml:max}.
\end{proof}


\section{Deterministic supremum bounds}
\label{p4:sec:supbd}

Our goal here is to obtain pathwise bounds on  the
deterministic integrals
\begin{equation}
\label{p4:eq:det:def:if:ibd}
\begin{array}{lcl}
   \mathcal{I}_F(t)
   &= &
     \int_0^te^{-\e(t-s)}\int_0^s\ip{S(s-s')V(s'),S(s-s')F(s')}_{H^1} \, ds' \, ds ,
\\[0.4cm]
  \mathcal{I}^{\mathrm{d}}_B(t)
 & = & 
\int_0^t e^{-\e(t-s)}
\int_0^{s}\nrm{S(s-s')B(s')}^2_{HS(L_Q^2,H^1)} \, d s' \, ds.
\end{array}
\end{equation}
As before, the parameter $\e \in (0, \beta)$ is taken to be fixed throughout the entire section.
We are using the process $F$ in the first integral as a placeholder for various linear and nonlinear expressions in $V$ that we will encounter in {\S}\ref{p4:sec:nls}. The second integral arises 
as the It\^o correction term coming from the integrated $H^1$-norm of $V$; see Lemma \ref{p4:lem:nls:H1:mr}. Besides the assumptions (hB) and (hV) introduced
in {\S}\ref{p4:sec:nls}, we impose the following condition on the new function $F$.
\begin{itemize}
    \item[(hF)] The process $F:[0,T]\times\Omega\to L^2$ 
    has paths in $L^1([0,T];L^2)$ and satisfies
    \begin{align}
        \label{p4:eq:supb:F:psi:tw}
         \ip{F(t),\psi_{\mathrm{tw}}}_{L^2}=0
      \end{align}
      for all $t \in [0,T]$.
\end{itemize}

In contrast to the stochastic setting of {\S}\ref{p4:sec:supb}, pathwise bounds for the expressions \sref{p4:eq:det:def:if:ibd} can be easily used to control
their supremum expectations. Indeed, we do not need to use the Burkholder-Davis-Gundy inequalities, which allows
us to take a far more direct approach to establish our two main
results below. Notice that we are making no a priori assumptions
on the size of $F$. This will be useful in {\S}\ref{p4:sec:nls}
to obtain sharp estimates for the nonlinear terms.

\begin{prop}
\label{p4:prop:det:supb:idf}
There exists a constant $K > 0$ so that
for any  $T>0$,
any pair of processes $(F,V)$ that satisfies (hF) and (hV)
and any $p \ge 1$,
we have the supremum bound
\begin{align}
\label{p4:eq:supb:mr:ib:detF}
E \sup_{0 \leq t \leq T } \abs{ \mathcal{I}_{F}(t)  }^{p}
  &\leq K^{p} \Lambda_*^{p} E \sup_{0 \leq t \leq T}
   \Big[\int_0^t e^{-\e(t-s)} \norm{F(s)}_{L^2} \, ds \Big]^{p}.
\end{align}
\end{prop}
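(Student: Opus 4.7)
The plan is to obtain a deterministic pointwise bound of the form $|\mathcal{I}_F(t)| \le K \Lambda_* \int_0^t e^{-\e(t-s)} \nrm{F(s)}_{L^2}\,ds$, after which the stated estimate follows by monotonicity and moving the supremum/expectation inside. The key tool is the identity \sref{p4:eq:supb:ib:splitip}, which lets us replace the $H^1$ inner product under the integral by
\begin{equation}
\mathcal{J}_{\mathrm{tw}}(s-s')[V(s'),F(s')] - \frac{1}{2\rho} \frac{d}{ds}\ip{S(s-s')V(s'),S(s-s')F(s')}_{L^2}.
\end{equation}
Since (hV) and (hF) force $\Pi V(s')=V(s')$ and $\Pi F(s')=F(s')$, all the semigroup bounds from Lemma \ref{p4:lem:prml:semigroup} and the $\mathcal{J}_{\mathrm{tw}}$ bound \sref{p4:eq:supb:ib:Bbound} will be available on each piece.

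I would then swap the order of the $(s,s')$-integrations via Fubini to isolate a fixed $s'$ and integrate first over $s \in [s',t]$. For the $\mathcal{J}_{\mathrm{tw}}$-piece, the combination of \sref{p4:eq:supb:ib:Bbound} with a change of variables $\tau=s-s'$ and the condition $\e<\beta$ gives
\begin{equation}
\int_{s'}^{t} e^{-\e(t-s)} \bigl|\mathcal{J}_{\mathrm{tw}}(s-s')[V(s'),F(s')]\bigr|\,ds \le C e^{-\e(t-s')} \nrm{V(s')}_{L^2}\nrm{F(s')}_{L^2},
\end{equation}
where the $(s-s')^{-1/2}$ singularity is integrable and the decay in $(t-s')$ is produced by the surplus factor $e^{-(2\beta-\e)\tau}$. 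Inserting the bound $\nrm{V(s')}_{L^2}\le\Lambda_*$ from (hV) yields a contribution of exactly the desired form.

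For the time-derivative piece, an integration by parts in $s$ on $[s',t]$ produces three terms: the boundary value at $s=t$, which equals $\ip{S(t-s')V(s'),S(t-s')F(s')}_{L^2}$ and is dominated by $M^2 e^{-2\beta(t-s')}\nrm{V(s')}_{L^2}\nrm{F(s')}_{L^2}$ using $\nrm{S(t)\Pi}_{\mathcal{L}(L^2,L^2)}\le Me^{-\beta t}$; the boundary value at $s=s'$, which is $e^{-\e(t-s')}\ip{V(s'),F(s')}_{L^2}$ and trivially bounded by Cauchy--Schwarz; and the extra $\e\int_{s'}^{t} e^{-\e(t-s)}\ip{S(s-s')V(s'),S(s-s')F(s')}_{L^2}\,ds$ from differentiating the exponential weight, which is handled like the $\mathcal{J}_{\mathrm{tw}}$-piece (without any singularity). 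In every case, since $2\beta>\e$, the resulting kernel in $s'$ is dominated by $C e^{-\e(t-s')}$ times $\nrm{V(s')}_{L^2}\nrm{F(s')}_{L^2}$.

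Collecting these four bounds and using $\nrm{V(s')}_{L^2}\le\Lambda_*$ yields a pathwise estimate
\begin{equation}
    |\mathcal{I}_F(t)| \le K \Lambda_* \int_0^t e^{-\e(t-s')} \nrm{F(s')}_{L^2}\,ds'
\end{equation}
for some absolute constant $K$, and the stated moment bound follows immediately by taking the supremum over $t\in[0,T]$, raising to the $p$-th power, and taking expectations. I expect the main technical care to be in the $\mathcal{J}_{\mathrm{tw}}$-piece, where the improper singularity at $s=s'$ must be handled as in the footnote surrounding \sref{p4:eq:st:def:cal:k:i}, but no stochastic estimates or chaining arguments are needed because $\mathcal{I}_F$ is a pathwise object.
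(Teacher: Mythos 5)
Your argument is correct and matches the paper's proof almost exactly: after swapping the $(s,s')$-integrations, the paper also splits the inner integrand via \sref{p4:eq:supb:ib:splitip} into the $\mathcal{J}_{\mathrm{tw}}$ term and a total $s$-derivative, integrates the latter by parts, and bounds each of the resulting four pieces by $Ce^{-\e(t-s')}\norm{V(s')}_{L^2}\norm{F(s')}_{L^2}$ using Lemma \ref{p4:lem:prml:semigroup} together with $\e<\b$, yielding the pathwise bound $|\mathcal{I}_F(t)|\le K\Lambda_*\int_0^t e^{-\e(t-s)}\norm{F(s)}_{L^2}\,ds$. The paper merely packages these steps by reducing to the already-proved Lemma \ref{p4:lem:st:bnds:cal:k:sq} via the substitution $B(s')\sqrt{Q}e_k\mapsto F(s')$, but the decomposition and estimates are the ones you describe.
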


\begin{prop}
\label{p4:prop:det:supb:idb}
There exists a constant $K > 0$ so that 
for any  $T>0$,
any process $B$ that satisfies (hB)
and any $p \ge 1$,
we have the supremum bound
\begin{align}
\label{p4:eq:supb:mr:ib:detd}
E \sup_{0 \leq t \leq T } \mathcal{I}^\mathrm{d}_{B}(t)^{p} 
  &\leq K^p \Theta_*^{2p} .
\end{align}
\end{prop}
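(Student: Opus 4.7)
The goal is to obtain a purely pathwise bound $\mathcal{I}^{\mathrm{d}}_B(t) \leq K \Theta_*^2$ for all $t \in [0,T]$; since this bound contains no randomness, raising to the $p$-th power and taking $\sup\!\cdot\!E$ immediately yields \sref{p4:eq:supb:mr:ib:detd}. The strategy mirrors the decomposition used in {\S}\ref{p4:sec:supb:ib} for the stochastic integral $\mathcal{I}^{\mspace{1mu}\mathrm{s}}_B$: we apply the identity \sref{p4:eq:supb:ib:splitip} termwise on a Hilbert-Schmidt basis. Note first that the orthogonality assumption \sref{p4:eq:supb:b:psi:tw} gives $P_{\mathrm{tw}} B(s')\sqrt{Q}e_k = 0$, so $\Pi B(s')\sqrt{Q}e_k = B(s')\sqrt{Q}e_k$ and Lemma \ref{p4:lem:prml:semigroup} applies to each basis element. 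Setting $\tau = s-s'$, summing over $k$, and interchanging the derivative with the (uniformly convergent) Hilbert-Schmidt sum, we obtain
\begin{equation*}
\|S(\tau)B(s')\|^2_{HS(L^2_Q,H^1)}
 = \sum_k \mathcal{J}_{\mathrm{tw}}(\tau)[\Pi B(s')\sqrt{Q}e_k,\Pi B(s')\sqrt{Q}e_k]
 - \frac{1}{2\rho}\frac{d}{d\tau}\|S(\tau)B(s')\|^2_{HS(L^2_Q,L^2)}.
\end{equation*}

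For the first (non-derivative) piece we apply the bound \sref{p4:eq:supb:ib:Bbound}, which contributes the integrable singularity $(1+\rho^{-1}\tau^{-1/2})e^{-2\beta\tau}$. Plugging $\tau = s-s'$ into $\mathcal{I}^{\mathrm{d}}_B(t)$, swapping the order of integration so that the $s$-integral becomes $\int_{s'}^{t} e^{-\e(t-s)}(1 + \rho^{-1}(s-s')^{-1/2})e^{-2\beta(s-s')}ds$, and changing variables $u = s-s'$, the resulting kernel factors as $e^{-\e(t-s')}\int_0^{t-s'} e^{-(2\beta-\e)u}(1+\rho^{-1}u^{-1/2}) du$. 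Since $\e < \beta < 2\beta$, the inner integral is bounded by an $(\e,\beta,\rho)$-dependent constant, and the first bound in \sref{p4:eq:supb:assumB} then gives a pathwise bound of the form $C_1 \int_0^t e^{-\e(t-s')}\|B(s')\|_{HS}^2 ds' \leq C_1 \Theta_*^2$.

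The derivative piece requires the most care. Writing $H(s,s') := \|S(s-s')B(s')\|^2_{HS(L^2_Q,L^2)}$ and swapping orders, we must estimate
\begin{equation*}
-\frac{1}{2\rho}\int_0^t \int_{s'}^t e^{-\e(t-s)}\,\partial_s H(s,s')\,ds\,ds'.
\end{equation*}
An integration by parts in $s$ produces three pieces: (i) the upper boundary $-\frac{1}{2\rho}\int_0^t H(t,s')ds' \leq 0$, which is discarded since it has the favorable sign for an upper bound; (ii) the lower boundary $\frac{1}{2\rho}\int_0^t e^{-\e(t-s')}H(s',s')ds' = \frac{1}{2\rho}\int_0^t e^{-\e(t-s')}\|B(s')\|_{HS}^2 ds' \leq \frac{1}{2\rho}\Theta_*^2$ by \sref{p4:eq:supb:assumB}; and (iii) the interior term $\frac{\e}{2\rho}\int_0^t\int_{s'}^t e^{-\e(t-s)}H(s,s')ds\,ds'$, which after swapping back is exactly a version of $\mathcal{I}^{\mathrm{d}}_B$ with the $H^1$-norm replaced by the $L^2$-norm. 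Applying $\|S(\tau)\Pi\|_{\mathcal{L}(L^2,L^2)} \leq M e^{-\beta \tau}$ and repeating the order-swap/change-of-variables argument from the first piece (no singularity here), this last term is again bounded pathwise by a constant multiple of $\Theta_*^2$. Collecting all contributions yields the desired pathwise estimate. The only subtle step is the integration by parts and the identification of the favorable sign of the upper boundary term; the rest is routine bookkeeping against the two bounds in \sref{p4:eq:supb:assumB}.
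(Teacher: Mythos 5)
Your proposal is correct and follows essentially the same route as the paper: both proofs apply the split \sref{p4:eq:supb:ib:splitip} to each Hilbert-Schmidt basis element (the paper does this after first swapping the order of the $s,s'$ integrals and bounding the inner integral $\sum_k \mathcal{K}^\mathrm{d}_{B;k}(t,s')$ for fixed $s'$, via the substitution $V(s')\mapsto B(s')\sqrt{Q}e_k$ into the argument of Lemma \ref{p4:lem:st:bnds:cal:k:sq}), integrate by parts on the derivative term, and invoke \sref{p4:eq:supb:ib:Bbound}, the decay bounds from Lemma \ref{p4:lem:prml:semigroup}, and the pathwise assumptions \sref{p4:eq:supb:assumB}. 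The only cosmetic difference is that you discard the upper boundary term by noting its favorable sign, whereas the paper simply bounds the absolute value of every term; this does not change the argument in any essential way.
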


\subsection{Estimates for $\mathcal{I}_F$ and $\mathcal{I}_B^\mathrm{d}$}
Upon introducing the expressions
\begin{equation}
\begin{array}{lcl}
    \mathcal{K}_F(t,s') & = & \int_{s'}^{t}e^{-\e(t-s)}\ip{S(s-s')V(s'),S(s-s')F(s')}_{H^1}ds,
\\[0.4cm]
\mathcal{K}_{B;k}^\mathrm{d}(t,s')
&=& \int_{s'}^{t}e^{-\e(t-s)}\ip{S(s-s')B(s')\sqrt{Q} e_k ,S(s-s')B(s') \sqrt{Q} e_k}_{H^1}ds,
\end{array}
\end{equation}
we may reverse the order of integration to find
\begin{equation}
\label{p4:eq:st:bnd:i:for:i:F:t}
    \mathcal{I}_F(t)
    =  \int_0^{t}
       \mathcal{K}_F(t,s') ds',
     \qquad \qquad  
    \mathcal{I}_B^\mathrm{d}(t)
    =  \int_0^{t}
       \sum_{k} \mathcal{K}_{B;k}^\mathrm{d}(t,s') ds' .
\end{equation}

\begin{lem}
There exists a constant $K_{F} \ge 1$ so that
for any 
$T  >0$,
any pair of processes $(V,F)$ that satisfies (hV) and (hF)
and
any $t \in [0,T]$, we have the bound
\begin{equation}
\label{p4:eq:det:f:bnd:for:k:f}
\begin{array}{lcl}  
\mathcal{K}_F(t,s')
& \leq & K_{F} e^{- \e (t-s')} \nrm{V(s')}_{L^2}
\nrm{F(s')}_{L^2}
\end{array}
\end{equation}
for all $0 \leq s' \leq t$.
\end{lem}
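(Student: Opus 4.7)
The plan is to mirror the proof of Lemma \ref{p4:lem:st:bnds:cal:k:sq}, this time in a purely deterministic (pathwise) setting. The only conceptual tool I need is the splitting identity \sref{p4:eq:supb:ib:splitip} applied with $v = V(s')$, $w = F(s')$, and time variable $s - s'$, which lets me decompose the $H^1$-inner product inside $\mathcal{K}_F(t,s')$ as
\begin{equation*}
\ip{S(s-s')V(s'),S(s-s')F(s')}_{H^1}
 = \mathcal{J}_\mathrm{tw}(s-s')[V(s'),F(s')]
  - \frac{1}{2\rho}\frac{d}{ds}\ip{S(s-s')V(s'),S(s-s')F(s')}_{L^2}.
\end{equation*}
Substituting this into the definition of $\mathcal{K}_F$ and performing an integration by parts in the derivative term yields a four-piece decomposition $\mathcal{K}_F = \mathcal{K}_{F,I} + \mathcal{K}_{F,IIa} + \mathcal{K}_{F,IIb} + \mathcal{K}_{F,IIc}$ that is directly parallel to the one used for $\mathcal{K}^{(i)}_k$ in the stochastic setting.

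Next, I would bound each of these four pieces separately. Assumptions (hV) and (hF) give $\Pi V(s') = V(s')$ and $\Pi F(s') = F(s')$, so the decay estimates of Lemma \ref{p4:lem:prml:semigroup} together with the bound \sref{p4:eq:supb:ib:Bbound} for $\mathcal{J}_\mathrm{tw}$ are available. For the $\mathcal{K}_{F,I}$ piece I would factor out $e^{-\e(t-s')}\nrm{V(s')}_{L^2}\nrm{F(s')}_{L^2}$, leaving the integral $\int_0^{t-s'} e^{(\e - 2\beta)\tau}\bigl(1 + \rho^{-1} \tau^{-1/2}\bigr)\,d\tau$; this is precisely the quantity $K(\e,\beta)$ appearing in \sref{p4:eq:st:def:k:eps:beta}, finite because $\beta > \e$. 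The piece $\mathcal{K}_{F,IIa}$ is treated the same way but without the $\tau^{-1/2}$ singularity. The boundary term $\mathcal{K}_{F,IIb}$ follows immediately from the exponential $L^2 \to L^2$ bound $\nrm{S(t-s')\Pi}_{\L(L^2,L^2)} \leq M e^{-\beta(t-s')}$, while $\mathcal{K}_{F,IIc}$ is controlled by Cauchy--Schwarz; since $2\beta \geq \e$, both dominate the required $e^{-\e(t-s')}$ envelope.

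Summing the four contributions produces a constant $K_F$ depending only on $M$, $\rho$, $\beta$, and $\e$, which yields the stated pointwise bound \sref{p4:eq:det:f:bnd:for:k:f}. I do not anticipate a genuine obstacle here: the deterministic, scalar-valued structure removes the need to commute sums past Hilbert--Schmidt norms or to invoke stopping times, and essentially every estimate is a routine specialization of a step already carried out in the stochastic chain. The only minor point that has to be checked carefully is the validity of the integration by parts near the lower endpoint $s = s'$, but because $e^{-\e(t-s)}$ is smooth and the map $s \mapsto \ip{S(s-s')V(s'),S(s-s')F(s')}_{L^2}$ extends continuously to $s = s'$ with value $\ip{V(s'),F(s')}_{L^2}$, there is no issue.
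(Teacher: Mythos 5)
Your proof is correct and takes the same route as the paper: the paper's own proof is a one-line reduction stating that $\mathcal{K}_F(t,s')$ is identical to $\mathcal{K}^{(i)}_k(s')$ from \sref{p4:eq:st:def:cal:k:i} after the replacement $B(s')\sqrt{Q}e_k \mapsto F(s')$, and then invokes the same four-piece decomposition (via \sref{p4:eq:supb:ib:splitip} and integration by parts) and the same estimates from Lemma \ref{p4:lem:st:bnds:cal:k:sq}. You have simply written out the details that the paper leaves implicit, and your handling of each piece — including the observation that (hV)/(hF) make $\Pi$ act as the identity and that $2\beta > \e$ controls the boundary terms — matches the paper's argument.
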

\begin{proof}
Observe that $\mathcal{K}_F(t,s')$ is identical
to \sref{p4:eq:st:def:cal:k:i} after making the replacement
$B(s')\sqrt{Q}e_k \mapsto F(s')$.
We can hence use the same decomposition as in {\S}\ref{p4:sec:supb:ib}
and follow the proof of Lemma \ref{p4:lem:st:bnds:cal:k:sq}
to obtain the stated bound.
\end{proof}

\begin{lem}
There exists a
constant $K_B^\mathrm{d} >0$ 
so that
for any 
$T >0$,
any process B that satisfies (hB)
and
any $t \in [0,T]$, we have the bound
\begin{equation}
\label{p4:eq:det:f:bnd:for:k:b:d}
\begin{array}{lcl}  
\sum_{k} \mathcal{K}_{B;k}^\mathrm{d}(t,s')
& \leq & K_B^\mathrm{d} \, e^{- \e (t-s')} \nrm{B(s')}_{HS}^2
\end{array}
\end{equation}
for all $0 \leq s' \leq t$.
\end{lem}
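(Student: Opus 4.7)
The plan is to adapt the proof of Lemma \ref{p4:lem:st:bnds:cal:k:sq} essentially verbatim, since $\mathcal{K}^\mathrm{d}_{B;k}(t,s')$ has exactly the same structural form as $\mathcal{K}^{(i)}_k(s')$ with both slots of the $H^1$-inner product occupied by the single vector $B(s')\sqrt{Q}e_k$. Thus the same four-term decomposition machinery applies, and ultimately summing over the basis $(e_k)$ converts $\nrm{B(s')\sqrt Q e_k}^2_{L^2}$ into $\nrm{B(s')}^2_{HS}$ as desired.

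First I would invoke the identity \sref{p4:eq:supb:ib:splitip} to rewrite the $H^1$-inner product inside $\mathcal{K}^\mathrm{d}_{B;k}$ as $\mathcal{J}_\mathrm{tw}(s-s')[B(s')\sqrt Q e_k, B(s')\sqrt Q e_k]$ minus $\frac{1}{2\rho}\frac{d}{ds}\nrm{S(s-s')B(s')\sqrt Q e_k}^2_{L^2}$. Integration by parts in $s$ on the second piece produces the exact analogues of the four terms $\mathcal{K}^{(i)}_{I;k},\mathcal{K}^{(i)}_{IIa;k},\mathcal{K}^{(i)}_{IIb;k},\mathcal{K}^{(i)}_{IIc;k}$ from the previous subsection, namely a $\mathcal{J}_\mathrm{tw}$-integral, an $\e$-weighted $L^2$-integral, a boundary term at $s=t$, and a boundary term at $s=s'$.

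Next I would bound each of these four pieces by $e^{-\e(t-s')}\nrm{B(s')\sqrt Q e_k}^2_{L^2}$ times a universal constant. The orthogonality assumption \sref{p4:eq:supb:b:psi:tw} in (hB) guarantees that $\Pi B(s')\sqrt Q e_k = B(s')\sqrt Q e_k$, which permits the use of the $e^{-\beta t}$ decay rates from Lemma \ref{p4:lem:prml:semigroup}, together with the estimate \sref{p4:eq:supb:ib:Bbound} for the $\mathcal{J}_\mathrm{tw}$ piece. The prefactor $e^{-\e(t-s')}$ is extracted as in \sref{p4:eq:st:def:k:eps:beta}, and the integrals reduce to $\int_0^\infty e^{(\e-2\beta)u}\bigl(1+\rho^{-1}u^{-1/2}\bigr)\,du$, which is finite since $\e<\beta$. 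Summing over $k$ then produces the Hilbert--Schmidt norm squared $\nrm{B(s')}^2_{HS}$ and yields the stated bound.

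The only mild obstacle is the integrable singularity $(s-s')^{-1/2}$ arising from \sref{p4:eq:supb:ib:Bbound} in the $\mathcal{J}_\mathrm{tw}$ term, but this is handled exactly as in Lemma \ref{p4:lem:st:bnds:cal:k:sq}: the singularity is absorbed by the exponential decay factor $e^{-(2\beta-\e)(s-s')}$, and no new ingredient beyond what has already been developed is required.
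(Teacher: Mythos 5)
Your proposal is correct and takes essentially the same approach as the paper: the paper's own proof consists of one sentence pointing out that $\mathcal{K}_{B;k}^\mathrm{d}(t,s')$ is obtained from \sref{p4:eq:st:def:cal:k:i} by the substitution $V(s')\mapsto B(s')\sqrt{Q}e_k$, then following the decomposition and estimates of Lemma \ref{p4:lem:st:bnds:cal:k:sq} verbatim. You have simply unpacked that reference, and your observations — the four-term decomposition via \sref{p4:eq:supb:ib:splitip}, the use of \sref{p4:eq:supb:b:psi:tw} to bring in $\Pi$ and the decay rates, the integrable $(s-s')^{-1/2}$ singularity, and the final sum over $k$ giving $\norm{B(s')}^2_{HS}$ directly (rather than after squaring, as in the original lemma) — are all accurate.
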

\begin{proof}
Observe that $\mathcal{K}_{B;k}^\mathrm{d}(t,s')$ is identical
to \sref{p4:eq:st:def:cal:k:i} after making the replacement
$V(s')\hspace{-1mm} \mapsto B(s')\sqrt{Q} e_k$.
We can hence use the same decomposition as in {\S}\ref{p4:sec:supb:ib}
and follow the proof of Lemma \ref{p4:lem:st:bnds:cal:k:sq}
to obtain the stated bound.
\end{proof}

\begin{proof}[Proof of Proposition \ref{p4:prop:det:supb:idf}]
Combining the identity \sref{p4:eq:st:bnd:i:for:i:F:t}
with the bound \sref{p4:eq:det:f:bnd:for:k:f},
we readily obtain the pathwise bound
\begin{align}
    \abs{\mathcal{I}_F(t)}
    \leq
    K_F \Lambda_* \int_0^te^{-\e(t-s)}\nrm{F(s)}_{L^2}ds.
\end{align}
The result hence follows by taking the expectation 
of the supremum.
\end{proof}

\begin{proof}[Proof of Proposition \ref{p4:prop:det:supb:idb}]
Combining the identity \sref{p4:eq:st:bnd:i:for:i:F:t}
with the estimate \sref{p4:eq:det:f:bnd:for:k:b:d},
we readily obtain the pathwise bound
\begin{align}
    \abs{\mathcal{I}_B^\mathrm{d}(t)}
    \leq
    K_B^\mathrm{d} \,  \Theta_*^2,
\end{align}
which of course survives taking the expectation 
of the supremum.
\end{proof}


\section{Nonlinear stability }
\label{p4:sec:nls}
With the results from the previous sections under our belt, we now set out to
prove the estimates in Theorem \ref{p4:thm:mr} and hence
establish the stochastic stability of the traveling wave on
exponentially long timescales. Our starting point will
be the computations in   \cite{hamster2017,hamster2020},
which use a time transformation to construct
a mild integral equation for the perturbation $V(t)$ 
that contains no dangerous second order derivatives.

In order to set the stage, we consider
pairs $(U,\Gamma) \in \mathcal{U}_{H^1} \times \mathbb{R}$ for which $\norm{U - \Phi_0(\cdot - \Gamma)}_{L^2}$ is sufficiently small
and introduce the scalar Hilbert-Schmidt operator
\begin{equation}
\label{eq:nls:def:b}
    \overline{b}(U,\G): L^2_Q \ni w \mapsto
- \langle \partial_\xi U ,  \psi_{\mathrm{tw}}(\cdot - \Gamma) \rangle_{L^2}^{-1} 
    \bip{ g(U)v, \psi_{\mathrm{tw}}(\cdot - \Gamma) }_{L^2} ,
\end{equation}
together with the $H^{-1}$-valued expression
\begin{equation}
\begin{array}{lcl}
\mathcal{K}_{\sigma}(U, \Gamma ,c)
& =&
   \rho U''+ c U' + f(U)
     +\frac{1}{2} \sigma^2 \nrm{\overline{b}(U,\G)}^2_{HS}
     U''
\\[0.2cm]
& & \qquad
 - \sigma^2    
 \langle \partial_\xi U ,  \psi_{\mathrm{tw}}(\cdot  - \Gamma) \rangle_{L^2}^{-1} 
   \Big[ g(U) Q g(U) \psi_{\mathrm{tw}}(\cdot - \Gamma)  \Big]' .
\end{array}
\end{equation}
Notice that $\mathcal{K}_{0}(\Phi_0, 0, c_0) = 0$
by construction. For small $\sigma$, one can use the implicit function theorem to define pairs $(\Phi_\s, c_\s)$ that satisfy  $\mathcal{K}_{\sigma}(\Phi_\sigma, 0, c_\sigma) =0$; see \cite[Prop. 5.1]{hamster2020}. With these pairs in hand, one can introduce a second scalar expression
\begin{equation}
\label{eq:nls:def:a}
    \overline{a}_{\s}(U,\G)=
    - \langle \partial_\xi U ,  \psi_{\mathrm{tw}}(\cdot - \Gamma) \rangle_{L^2}^{-1} 
    \ip{   \mathcal{K}_{\s}( U, \G ,c_\sigma), \psi_{\mathrm{tw}}(\cdot - \Gamma)
     }_{L^2}.
\end{equation}
In \cite[{\S}5.2]{hamster2020} we explain how the
definitions \sref{eq:nls:def:b}-\sref{eq:nls:def:a}
can be extended to all $(U, \Gamma) \in \mathcal{U}_{H^1} \times \mathbb{R}$
by utilizing a pair of cut-off operators.

The stochastic phase $\Gamma(t)$ referred to
in Theorem \ref{p4:thm:mr}
is given by
coupling the SDE
\begin{align}\label{eq:res:Gamma}
    d \Gamma=  \big[ c_{\sigma} + \overline{a}_{\sigma}(U,\G) \big] \, dt
     + \sigma \overline{b}(U,\G) dW^Q_t
\end{align}
to our original SPDE \sref{p4:eq:mr:main:spde},
which is well-defined on account of \cite[Prop. 5.2]{hamster2020}. The initial phase $\Gamma(0)$ is chosen in such a way that the perturbation
\begin{equation}
V(t) =  U(\cdot +\G(t),t) - \Phi_\s    
\end{equation}
defined in \sref{p4:eq:int:perturbations}
satisfies $\langle V(0), \psi_{\mathrm{tw}} \rangle_{L^2} = 0$; see \cite[Prop 2.3]{hamster2017}. A delicate argument based on It{\^o}'s lemma subsequently shows  \cite[Prop. 5.4]{hamster2020} that $V$ satisfies the integral identity
\begin{equation}
   \label{eq:stb:eqn:for:V}
     \begin{array}{lcl}
  V(t) & = & V(0) + \int_0^t
     \mathcal{R}_{\sigma}\big( V(s) \big)\, ds
    + \sigma \int_0^t
        \mathcal{S}_{\s}\big(V(s)\big)
       d W^Q_s 
 \end{array}
 \end{equation}
 posed in $H^{-1}$, in which the nonlinearities are given by
\begin{equation}
\begin{array}{lcl}
    \mathcal{R}_\s(V) 
    & = & \mathcal{K}_\s(\Phi_\s + V, 0, c_\s) +  \overline{a}_\s( \Phi_\s +V, 0) [ \Phi_\s' + V' ],
    \\[0.2cm]
    \mathcal{S}_\s(V)[w]
    &=&
     g(\Phi_\s+V)[w]
       +\p_\xi(\Phi_\s+V)\overline{b}(\Phi_{\s} + V,0)[w] 
\end{array}
\end{equation}
for $V \in H^1$ and $w \in L^2_Q$.
By construction, we have $\mathcal{R}_{\sigma}(0) = 0$ and the identity $\langle V(t), \psi_{\mathrm{tw}} \rangle = 0$ is preserved as long as
$\norm{V(t)}_{L^2}$ remains small.

Upon introducing the notation
\begin{equation}
    \kappa_{\sigma}(V) = 1 + \frac{\sigma^2}{2 \rho} \nrm{\overline{b}(\Phi_\s + V,\G)}^2_{HS},
\end{equation}
we observe that $\mathcal{R}_{\sigma}(V)$ contains troublesome nonlinear terms of the form $\rho \kappa_{\sigma}(V) V''$ that lack sufficient smoothness. The situation can be repaired by passing to a transformed time that satisfies $\tau'(t) = \kappa_{\sigma}\big(V(t) \big)$.
Indeed, using \cite[Prop. 6.3]{hamster2017}
we see that 
the transformed function $\overline{V}\big(\tau(t) \big)= V(t)$
admits the mild representation
\begin{equation}
  \label{eq:stb:spde:for:ovl:v}
  \begin{array}{lcl}
    \overline{V}(\tau)
     & = & S(\tau)\overline{V}(0)
     + \int_0^\tau
     S(\tau - \tau') \overline{\mathcal{W}}_{\sigma}\big( \overline{V}(\tau') \big)
            \, d \tau'
     + \sigma \int_0^\tau 
        S(\tau - \tau')
        \overline{\mathcal{S}}_{\sigma}
        \big( \overline{V}(\tau') \big)
       \, d \overline{W}^Q_{\tau'}.
  \end{array}
  \end{equation}
Here $\overline{W}_{\tau}^Q$ represents
  the natural time-transformation of $W_{t}^Q$ (see \cite[Lem. 6.3]{hamster2020}), while 
the nonlinearities are given by
\begin{equation}
\begin{array}{lcl}
\overline{\mathcal{W}}_{\sigma}(V)
& = & 
    \kappa_{\s}^{-1}\big(V \big)
         \mathcal{R}_{\sigma}\big(V\big)
         - \mathcal{L}_{\mathrm{tw}}V ,
\\[0.2cm]
 \overline{\mathcal{S}}_{\sigma}
        \big( V\big)[w]
        & = & \kappa_{\s}^{-1/2}\big(V\big) \mathcal{S}_{\sigma}\big(V\big)[w]
\end{array}
\end{equation}
for $V \in H^1$ and $w \in L^2_Q$. The key point is
that $\overline{\mathcal{W}}_{\s}$ no longer includes second derivatives and hence maps $H^1$ into $L^2$.

The arguments in \cite[Prop. 6.4]{hamster2017}
and \cite[\S 6.1]{hamster2020} indicate that this time transformation
only affects the constants in the final estimate 
\sref{p4:eq:mr:estimate:on:prob}. For presentation purposes,
we therefore simply reuse $t$ and $V$ for the transformed time and perturbation
and leave the definition \sref{p4:eq:mr:defStopTime}
for the stopping time $t_{\mathrm{st}}(\eta)$ intact. 
In order to highlight only the most relevant 
$\sigma$-dependencies and split off linear terms, we also replace 
\sref{eq:stb:spde:for:ovl:v} by the generic system
\begin{align}
\label{p4:eq:nls:ttV}
V(t)=S(t) V(0)+\int_0^tS(t-s)[\s^2F_\mathrm{lin}\big(V(s)\big)+F_\mathrm{nl}\big(V(s)\big)]ds +\s\int_0^tS(t-s)B\big(V(s)\big)dW^Q_s.
\end{align}
Based on the estimates
for $\overline{\mathcal{W}}_{\s}$
and $\overline{\mathcal{S}}_{\s}$
in \cite[App. A]{hamster2020}
and \cite[Prop 8.1]{hamster2017},
we assume that the maps
\begin{equation}
F_{\mathrm{lin}} : H^1 \to L^2,
\qquad
F_{\mathrm{nl}} : H^1 \to L^2,
\qquad
B: H^1 \to HS(L^2_Q, L^2)
\end{equation}
satisfy the bounds 
\begin{equation}
\begin{array}{lcl}
\nrm{F_{\mathrm{lin}}(v)}_{L^2}&\leq& K_{\mathrm{lin}}\nrm{v}_{H^1},
  \\[0.2cm]
\nrm{F_{\mathrm{nl}}(v)}_{L^2}&\leq & K_{\mathrm{nl}}\nrm{v}^2_{H^1}(1+\nrm{v}^3_{L^2}),
  \\[0.2cm]
\nrm{B(v)}_{HS}&\leq&  K_B(1+\nrm{v}_{H^1}) ,
  \\[0.2cm]
\nrm{S(1)B(v)}_{HS}&\leq&  K_BM(1+\nrm{v}_{L^2}).
\end{array}
\end{equation}
In addition, we assume that there exists a constant $\eta_0$ so that
the identities
\begin{equation}
\label{p4:eq:nls:projs:are:zero}
\langle \sigma^2 F_{\mathrm{lin}}(v) + F_{\mathrm{nl}}(v) , \psi_{\mathrm{tw}} \rangle_{L^2}
 = 0,
\qquad \qquad
\langle B(v)[w] , \psi_{\mathrm{tw}} \rangle_{L^2}
 = 0
\end{equation}
hold for every $w \in L^2_Q$
whenever $\norm{v}_{L^2}^2 < \eta_0$. Finally, we assume that $\langle V(0) , \psi_{\mathrm{tw}} \rangle_{L^2} = 0$.


In order to state the main result of this section, we again fix $\e \in (0, \beta)$ and
write
\begin{align}
  N(t)=\norm{V(t)}_{L^2}^2 + \int_0^t e^{-\e(t-s)} \norm{V(s)}_{H^1}^2 \, ds
\end{align}
for the size of the solution $V$ to \sref{p4:eq:nls:ttV}, which also features in the definition
\sref{p4:eq:mr:defStopTime} for the stopping time $t_{\mathrm{st}}=t_{\mathrm{st}}(\eta)$.
The various supremum bounds derived in \S\ref{p4:sec:supb} and \S\ref{p4:sec:supbd} can now be used to obtain
similar bounds for $N(t)$. This result can be seen 
as a significantly sharpened version of its counterpart \cite[Prop. 9.1]{hamster2017}.

\begin{prop}
\label{p4:prp:nls:general}
Pick two sufficiently small
constants
$\delta_{\eta} > 0$
and $\delta_{\sigma} > 0$.
Then there exists
a constant $K > 0$
so that for any integer $T \geq 2$,
any $0 < \eta < \delta_{\eta}$,
any $0 \leq \sigma \leq \delta_{\sigma}$ 
and any $p \ge 1$
we have the bound
\begin{equation}
\label{p4:eq:nls:prp:general:estimate}
E \, \left[\sup_{0\leq t\leq t_{\mathrm{st}}}N(t)^p\right]
\leq K\left[\nrm{V(0)}_{L^2}^{2p}
+\s^{2p}\big(p^p + \ln(T)^p\big) +
 \s^{p} \eta^{p/2} \big( p^{p/2} + \ln(T)^{p/2}\big)\right].
\end{equation}
\end{prop}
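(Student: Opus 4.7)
The plan is to express $N(t)$ as a sum of contributions that can each be controlled by the propositions already established in Sections 3 and 4, then close the estimate by a Gronwall-type absorption that exploits the smallness of $\eta$.

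For $\norm{V(t)}_{L^2}^2$ I would use the mild equation \sref{p4:eq:nls:ttV} directly, expanding the square into contributions from $S(t)V(0)$, the deterministic drift, and the stochastic convolution $\sigma \eb_{B(V)}(t)$. The integrated $H^1$-term requires more care: I would apply a mild It\^o formula (of the kind used to derive \cite[Prop. 5.4]{hamster2020}) to $\norm{V(s)}_{H^1}^2$, with the critical regularity issues resolved by the key identity \sref{p4:eq:supb:ib:splitip}. After integrating against $e^{-\e(t-s)}\,ds$ I expect to obtain a decomposition of the form
\begin{equation*}
\int_0^t e^{-\e(t-s)}\nrm{V(s)}_{H^1}^2\,ds
\;=\; \mathcal{I}_{\mathrm{init}}(t) + 2\sigma^2 \mathcal{I}_{F_{\mathrm{lin}}}(t) + 2\mathcal{I}_{F_{\mathrm{nl}}}(t) + 2\sigma\,\mathcal{I}^{\mspace{1mu}\mathrm{s}}_{B(V)}(t) + \sigma^2 \mathcal{I}^{\mathrm{d}}_{B(V)}(t),
\end{equation*}
where the initial term is handled by combining Lemma \ref{p4:lem:prml:semigroup} with \sref{p4:eq:supb:ib:splitip} to yield an $\norm{V(0)}_{L^2}^2$ bound.

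Next I would stop the processes at $t_{\mathrm{st}}$. The definition of $t_{\mathrm{st}}$ provides the pathwise bounds $\norm{V(t)}_{L^2}^2\le \eta$ and $\int_0^t e^{-\e(t-s)}\nrm{V(s)}_{H^1}^2\,ds \le \eta$, which together with $\nrm{B(v)}_{HS}\le K_B(1+\nrm{v}_{H^1})$ and $\nrm{S(1)B(v)}_{HS}\le K_B M(1+\nrm{v}_{L^2})$ verify (hB) and (hV) with $\Theta_* \sim \s$ and $\Lambda_*\sim \sqrt\eta$. Then Proposition \ref{p4:prop:supb:mr} yields
$E\sup \nrm{\s\eb_{B(V)}}_{L^2}^{2p}\lesssim \s^{2p}(p^p+\ln(T)^p)$ and
$E\max |\s\mathcal{I}^{\mspace{1mu}\mathrm{s}}_{B(V)}(i)|^p\lesssim \s^p\eta^{p/2}(p^{p/2}+\ln(T)^{p/2})$, which are exactly the two non-trivial terms on the right-hand side of \sref{p4:eq:nls:prp:general:estimate}. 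Proposition \ref{p4:prop:det:supb:idb} gives $E\sup|\s^2\mathcal{I}^{\mathrm{d}}_{B(V)}|^p\lesssim \s^{2p}$, which is absorbed into the $\sigma^{2p}p^p$ term.

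The remaining contributions come from the nonlinearities. Applying Proposition \ref{p4:prop:det:supb:idf} with $F=\s^2 F_{\mathrm{lin}}(V)$ and using Cauchy--Schwarz on $\int_0^t e^{-\e(t-s)}\nrm{V(s)}_{H^1}ds\le \sqrt{\eta/\e}$ produces an $O(\s^{2p}\eta^p)$ contribution, which is dominated by $\s^{2p}$ for small $\eta$; the analogous deterministic terms from the $L^2$-expansion are handled the same way. The crucial nonlinear term is $\mathcal{I}_{F_{\mathrm{nl}}}$: since $\nrm{F_{\mathrm{nl}}(V)}_{L^2}\le 2K_{\mathrm{nl}}\nrm{V}_{H^1}^2$ under $\nrm{V}_{L^2}\le\sqrt{\eta}$, Proposition \ref{p4:prop:det:supb:idf} gives
$E\sup|\mathcal{I}_{F_{\mathrm{nl}}}|^p \lesssim \eta^{p/2}\,E\sup N(t)^p$. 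Choosing $\delta_\eta$ small enough that the prefactor satisfies $C\eta^{p/2}\le \tfrac12$ allows this term to be absorbed into the left-hand side. Summing everything yields \sref{p4:eq:nls:prp:general:estimate}.

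The main obstacle is the mild It\^o expansion at the critical $H^1$ regularity. The semigroup $S$ does not smoothen from $L^2$ to $H^1$ in an integrable way, so one cannot apply a standard It\^o formula in $H^1$; instead, the decomposition via \sref{p4:eq:supb:ib:splitip} that splits the $H^1$-inner product into a well-behaved $\mathcal{J}_{\mathrm{tw}}$ piece and a total derivative in $L^2$ is essential, and this is precisely why the integrands $\mathcal{I}^{\mspace{1mu}\mathrm{s}}_B$ and $\mathcal{I}^{\mathrm{d}}_B$ were set up with that exact structure in Sections \ref{p4:sec:supb} and \ref{p4:sec:supbd}. A secondary technical issue is verifying that the linear and initial contributions produce only terms of the desired order and do not accumulate additional logarithmic or polynomial factors in $T$; this is settled by the pointwise decay of $S(t)\Pi$ combined with the weight $e^{-\e(t-s)}$.
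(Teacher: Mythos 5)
Your proposal is correct and follows essentially the same route as the paper: the $L^2$-norm of $V$ is controlled directly from the mild equation \sref{p4:eq:nls:ttV}, the integrated $H^1$-norm is treated via a mild It\^o formula and the identity \sref{p4:eq:supb:ib:splitip}, the stopping time is used to verify (hB) and (hV), Propositions \ref{p4:prop:supb:mr}, \ref{p4:prop:det:supb:idf} and \ref{p4:prop:det:supb:idb} supply the term-by-term bounds, and the nonlinear contribution is absorbed for small $\eta$ exactly as in the paper's Lemmas \ref{p4:lem:nls:L2:mr} and \ref{p4:lem:nls:H1:mr}. One small slip worth correcting: (hB) is verified with $\Theta_*^2 \sim \e^{-1}+\eta = \O(1)$ rather than $\Theta_* \sim \sigma$; the $\sigma$-factors in the final estimate come from the explicit $\sigma$ prefactors in \sref{p4:eq:nls:ttV}, and your stated conclusions $E\sup\nrm{\s\eb_{B(V)}}_{L^2}^{2p}\lesssim\s^{2p}(p^p+\ln(T)^p)$ etc.\ are consistent with this reading, so the end result is unaffected.
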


The control on all the moments of $N(t)$ allows
us to significantly improve the bound
obtained in \cite[Cor. 9.3]{hamster2017}.
Indeed, we can now employ an exponential Markov inequality to show that the time $T$ can be chosen
to be exponentially large in $1/\sigma$. This allows us establish Theorem \ref{p4:thm:mr}
in a standard fashion.

\begin{cor}
\label{p4:cor:nls:bnd}
Under the conditions of Proposition
\ref{p4:prp:nls:general}, 
we have the bound
\begin{equation}
\begin{array}{lcl}
P(t_{\mathrm{st}}(\eta) < T ) 
& \le & 
3 T^{\frac{1}{2e}} \mathrm{exp}[
- \frac{ \eta - 2eK \norm{V(0)}^2_{L^2} }
{ {2eK \sigma (\sigma + \sqrt{\eta})} }
].
\end{array}
\end{equation}
\end{cor}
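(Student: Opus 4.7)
The plan is to derive the exit-probability estimate from the moment bound of Proposition \ref{p4:prp:nls:general} by means of an exponential Markov inequality. Since $N$ has continuous paths, the event $\{t_{\mathrm{st}}(\eta) < T\}$ forces $\sup_{0 \leq t \leq t_{\mathrm{st}}} N(t) \geq \eta$, so it suffices to bound the tail of the nonnegative random variable $Z := \sup_{0 \leq t \leq t_{\mathrm{st}}} N(t)$.

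Write $a = \nrm{V(0)}_{L^2}^2$ and introduce the effective scale $\Theta^2 := \sigma(\sigma + \sqrt{\eta})$. The elementary bounds $\sigma^2 \leq \Theta^2$ and $\sigma\sqrt{\eta} \leq \Theta^2$, together with $p^{p/2} \leq p^p$ and $\ln(T)^{p/2} \leq \ln(T)^p$ (the latter ensured by possibly shrinking $\delta_\sigma, \delta_\eta$), allow the moment estimate \sref{p4:eq:nls:prp:general:estimate} to be consolidated into
\begin{equation*}
E[Z^p] \leq \tilde K \bigl[a^p + \Theta^{2p}(p^p + \ln(T)^p)\bigr]
\end{equation*}
for a suitable $\tilde K$ depending only on $K$. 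Then for $\nu \in (0, 1/(e\Theta^2))$, I would expand the moment generating function and invoke the standard inequality $p^p/p! \leq e^p$ together with the identity $\sum_{p \geq 0}(\nu\Theta^2 \ln T)^p/p! = T^{\nu\Theta^2}$ to derive
\begin{equation*}
E[e^{\nu Z}] \leq \tilde K\Bigl[e^{\nu a} + \frac{1}{1 - e\nu\Theta^2} + T^{\nu\Theta^2}\Bigr].
\end{equation*}

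The pivotal choice is $\nu := 1/(2e\Theta^2) = 1/(2e\sigma(\sigma+\sqrt{\eta}))$: the geometric series then evaluates to $2$ and the last term becomes exactly $T^{1/(2e)}$. Applying Markov's inequality $P(Z > \eta) \leq e^{-\nu\eta}E[e^{\nu Z}]$ and exploiting $e^{-\nu\eta} \leq e^{-\nu(\eta - a)}$ (since $a \geq 0$) collapses the three contributions into a single expression of the shape $C\, T^{1/(2e)} \exp\bigl[-(\eta - a)/(2e\sigma(\sigma+\sqrt{\eta}))\bigr]$.

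The remaining task is bookkeeping to match the statement's precise form. The numerical prefactor $C$ and the extra factor $K$ in the denominator of the target exponent are absorbed by observing that, after possibly enlarging $K$ in Proposition \ref{p4:prp:nls:general} and further restricting $\delta_\sigma$, the inequality
\begin{equation*}
C \exp\Bigl[-\frac{\eta - a}{2e\sigma(\sigma+\sqrt{\eta})}\Bigr] \leq 3 \exp\Bigl[-\frac{\eta - 2eK a}{2eK\sigma(\sigma+\sqrt{\eta})}\Bigr]
\end{equation*}
holds throughout the admissible parameter range, since the left exponent exceeds the right one whenever $2eK \geq 1$ and the residual prefactor gap $C/3$ is covered by a uniform lower bound on $(\eta - a)/(\sigma(\sigma+\sqrt{\eta}))$. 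I expect this constant-chasing to be the only friction in the argument, as the analytical content is entirely captured by the single choice $\nu = 1/(2e\Theta^2)$, which balances the effective stochastic scale $\sigma(\sigma+\sqrt{\eta})$ against the logarithmic price $T^{1/(2e)}$ paid for taking a supremum over $[0,T]$.
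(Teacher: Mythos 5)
Your overall strategy is precisely the paper's: bound $Z = \sup_{0\le t\le t_{\mathrm{st}}}N(t)$ via the exponential Markov inequality, expand $E\,e^{\nu Z}$ term by term using $p!\geq p^p e^{-p}$, and pick $\nu$ to balance the geometric series against the $T^{\nu\Theta^2}$ factor. The crucial deviation is where the constant $K$ from Proposition \ref{p4:prp:nls:general} lives. The paper absorbs $K$ directly into the effective scales, setting $\Theta_1 = K\sigma(\sigma+\sqrt{\eta})$ and $\Theta_2 = K\big(\sigma(\sigma+\sqrt{\eta})\ln T + \norm{V(0)}_{L^2}^2\big)$, which lets the moment bound be recast as $E Z^p \le p^p\Theta_1^p + \Theta_2^p$ with \emph{no} multiplicative prefactor left over (using $K\le K^p$ for $K,p\ge 1$). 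The choice $\nu = (2e\Theta_1)^{-1} = (2eK\sigma(\sigma+\sqrt{\eta}))^{-1}$ then produces the $K$ in the denominator of the target exponent automatically, and the remaining comparison is merely $2 + T^{1/(2e)}e^{\nu\Theta_2} \le 3\,T^{1/(2e)}e^{\nu\Theta_2}$, valid for $T\ge 2$.

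You instead set $\Theta^2 = \sigma(\sigma+\sqrt{\eta})$ and $\nu = (2e\Theta^2)^{-1}$ \emph{without} the $K$, leaving a stray $\tilde K\ge 1$ multiplying $E\,e^{\nu Z}$, and you then try to convert $C\exp\big[-(\eta-a)/(2e\sigma(\sigma+\sqrt{\eta}))\big]$ into $3\exp\big[-(\eta-2eKa)/(2eK\sigma(\sigma+\sqrt{\eta}))\big]$ by invoking ``a uniform lower bound on $(\eta-a)/(\sigma(\sigma+\sqrt{\eta}))$.'' No such bound exists in the admissible range: $\eta$ and $\sigma$ are independent, so e.g.\ taking $\eta\ll\sigma^2$ drives $\eta/\big(\sigma(\sigma+\sqrt{\eta})\big)\to 0$, and since $a=\norm{V(0)}^2_{L^2}$ is unconstrained in the Corollary one may also have $a$ arbitrarily close to (or exceeding) $\eta$. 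In those regimes the inequality you need fails outright (the target bound being trivially $\geq 1$ saves the conclusion, but not your chain of estimates). Shrinking $\delta_\sigma,\delta_\eta$ cannot help, since the failure is driven by the \emph{ratio} of these small parameters. The clean fix is exactly the paper's: keep the $K$ inside $\Theta_1$ so that $\nu$ carries it from the outset, rather than trying to conjure it at the end.
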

\begin{proof}
For convenience, we introduce the random variable
\begin{equation}
    Z = \sup_{0 \le t \le t_{\mathrm{st}} } N(t).
\end{equation}
Picking an arbitrary $\nu > 0$, 
we observe that
\begin{equation}
\begin{array}{lcl}
p(t_{\mathrm{st}}(\eta) < T ) &=& P\Big(
 \sup_{0 \leq t \leq T} \big[N(t)\big]
 > \eta
\Big)
\\[0.2cm]
& = & 
e^{-\nu \eta} E [ \mathbf{1}_{t_{st} < T}  e^{\nu N(t_{st}) } ]
\\[0.2cm]
&\leq &
     e^{-\nu \eta}
     E e^{\nu N(t_{st})}
    \\[0.2cm]
&\leq &
     e^{-\nu \eta}
     E e^{\nu Z}.
    \\[0.2cm]    
\end{array}
\end{equation} 
Upon introducing the quantities
\begin{equation}
    \Theta_1 = K\sigma(\sigma + \sqrt{\eta}),
    \qquad \qquad
    \Theta_2 = K\big( \sigma(\sigma + \sqrt{\eta}) \ln(T) + \norm{V(0)}_{L^2}^2 \big),
\end{equation}
the bound \sref{p4:eq:nls:prp:general:estimate}
can be simplified (and slightly weakened)
into the form
\begin{equation}
    E Z^p \le p^p \Theta_1^p + \Theta_2^p.
\end{equation}
Using $p! \geq p^{p} e^{-p}$,
this allows us to compute
\begin{equation}
\begin{array}{lcl}
    E e^{\nu Z}
& \le & 
\sum_{p=0}^\infty \frac{\nu^p}{p!}
      \big(p^{p} \Theta_1^p + \Theta_2^p \big)
\\[0.2cm]
& \le &
e^{\nu \Theta_2} +
\sum_{p=0}^\infty 
     \nu^p
     e^{p}\Theta_1^{p}.
\end{array}
\end{equation}
Choosing $\nu = (2e \Theta_1)^{-1}$,
we obtain
\begin{equation}
\begin{array}{lcl}
p(t_{\mathrm{st}}(\eta) < T ) 
& \le & \mathrm{exp}\big[- \frac{\eta}{2eK \sigma (\sigma + \sqrt{\eta})}\big]
\Big(
2 + \mathrm{exp}\big[
\frac{\ln(T)}{2e} +  \frac{\norm{V(0)}^2_{L^2}}{\sigma(\sigma + \sqrt{\eta}) }
\big] 
\Big),
\\[0.2cm]
\end{array}
\end{equation}
which can be absorbed into the stated bound.
\end{proof}

\begin{proof}[Proof of Theorem \ref{p4:thm:mr}]
Following the proof of \cite[Thm. 2.4]{hamster2017}, we can undo the stochastic time-transformation.  The desired bound
then follows  immediately from Corollary
\ref{p4:cor:nls:bnd} upon choosing $\kappa\leq (4eK)^{-1}$ and noting that $3T^\frac{1}{2e}<2T$ for $T\geq 2$. 
%
\end{proof}

\subsection{Supremum bounds in $L^2$}
\label{p4:sec:nls:L2}

In this subsection we establish the following  bound on 
the supremum of the $L^2$-norm of $V(t)$. Notice
that we are imposing less restrictions on $\sigma$ and $\eta$ here,
but $N(t)$ still appears on the right-hand side
of our estimate.
\begin{lem}
\label{p4:lem:nls:L2:mr}
There exists a constant $K \ge 1$
so that for any integer $T \geq 2$, any $0 < \eta < \eta_0$, any $0 \leq \sigma \leq 1$
and any $p \ge 1$,
we have the bound
\begin{align}
    E\sup_{0\leq t\leq t_\mathrm{st}}\nrm{V(t)}_{L^2}^{2p}\leq K^{2p}\Big[\nrm{V(0)}_{L^2}^{2p}+\s^{2p}\ln(T)^{p} + \s^{2p} p^{p}
    +(\s^4+\eta)^{p} E\sup_{0\leq t\leq t_\mathrm{st}}N(t)^{p}\Big].
\end{align}
\end{lem}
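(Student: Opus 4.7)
The natural starting point is the mild formulation \sref{p4:eq:nls:ttV}, which provides the decomposition
\[
V(t) = S(t)V(0) + \sigma^2 \int_0^t S(t-s) F_\mathrm{lin}(V(s))\, ds + \int_0^t S(t-s) F_\mathrm{nl}(V(s))\, ds + \sigma \mathcal{E}_B(t).
\]
Since $\langle V(0),\psi_\mathrm{tw}\rangle = 0$ and the orthogonality conditions \sref{p4:eq:nls:projs:are:zero} are active on the stopping interval $[0,t_\mathrm{st}]$ (where $\|V\|_{L^2}^2 < \eta_0$), I may insert the spectral projection $\Pi$ in front of every occurrence of the semigroup and exploit the exponential decay $\|S(t)\Pi\|_{\mathcal{L}(L^2,L^2)} \leq M e^{-\beta t}$ recorded in Lemma \ref{p4:lem:prml:semigroup}. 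Applying $(a+b+c+d)^{2p} \leq 4^{2p-1}(a^{2p}+b^{2p}+c^{2p}+d^{2p})$ reduces the lemma to four independent estimates.

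The $S(t)V(0)$ piece directly gives $M^{2p}\|V(0)\|_{L^2}^{2p}$. For the linear drift I combine the hypothesis $\|F_\mathrm{lin}(V)\|_{L^2} \leq K_\mathrm{lin}\|V\|_{H^1}$ with a Cauchy--Schwarz splitting $e^{-\beta(t-s)} = e^{-(\beta-\e/2)(t-s)} e^{-\e(t-s)/2}$ to obtain
\[
\sigma^2 \int_0^t e^{-\beta(t-s)} \|V(s)\|_{H^1}\, ds \leq \frac{M K_\mathrm{lin}}{\sqrt{2\beta-\e}}\, \sigma^2 \sqrt{N(t)},
\]
whose $2p$-th power is of order $\sigma^{4p} N(t)^p$. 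For the nonlinear drift I first absorb $1+\|V\|_{L^2}^3 \leq 2$ using $\eta<\delta_\eta$, then use $\beta > \e$ to bound
\[
\int_0^t e^{-\beta(t-s)} \|V(s)\|_{H^1}^2\, ds \leq \int_0^t e^{-\e(t-s)} \|V(s)\|_{H^1}^2\, ds \leq N(t) \leq \sqrt{\eta\, N(t)},
\]
whose $2p$-th power is of order $\eta^p N(t)^p$. Both pass through $E\sup_{t\leq t_\mathrm{st}}$ and land inside a $(\sigma^4+\eta)^p E\sup N(t)^p$ contribution.

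For the stochastic convolution I apply Proposition \ref{p4:prop:supb:mr}, estimate \sref{p4:eq:supb:mr:eb}, to the truncated integrand $\tilde B(s) = B(V(s))\mathbf{1}_{s\leq t_\mathrm{st}}$, which leaves $\mathcal{E}_B$ unchanged on $[0,t_\mathrm{st}]$. Hypothesis (hB) then holds deterministically: \sref{p4:eq:nls:projs:are:zero} yields \sref{p4:eq:supb:b:psi:tw}, while $\|B(v)\|_{HS} \leq K_B(1+\|v\|_{H^1})$ combined with $N(s)\leq \eta$ on $[0,t_\mathrm{st}]$ forces
\[
\int_0^t e^{-\e(t-s)} \|\tilde B(s)\|_{HS}^2\, ds \leq 2K_B^2(\e^{-1} + \eta), \qquad \|S(1)\tilde B(s)\|_{HS}^2 \leq 2K_B^2 M^2(1+\eta),
\]
so $\Theta_*$ can be chosen uniformly in $\sigma$ and $\eta < \delta_\eta$. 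Proposition \ref{p4:prop:supb:mr} then gives $\sigma^{2p}E\sup_{0\leq t\leq T}\|\mathcal{E}_{\tilde B}(t)\|_{L^2}^{2p} \leq K^{2p}\sigma^{2p}(p^p + \ln(T)^p)$, which supplies the two remaining terms of the bound.

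The only genuinely delicate point is the stopping-time truncation for the stochastic term together with the careful accounting that forces every $\eta$-dependent residue from the deterministic integrals to land inside the $(\sigma^4+\eta)^p$ coefficient rather than contaminating the $\sigma^{2p}\ln(T)^p$ or $\sigma^{2p}p^p$ contributions. The remainder is bookkeeping built on Lemma \ref{p4:lem:prml:semigroup} and Proposition \ref{p4:prop:supb:mr}.
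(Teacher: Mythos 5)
Your proposal is correct and follows essentially the same route as the paper: decompose $V$ via the mild formulation \sref{p4:eq:nls:ttV} into the initial-data, linear-drift, nonlinear-drift, and stochastic-convolution pieces, control the first three by pathwise semigroup estimates on $[0,t_{\mathrm{st}}]$, and apply Proposition \ref{p4:prop:supb:mr} to the truncated integrand $B(V)\mathbf{1}_{s\le t_{\mathrm{st}}}$ with exactly the choice $\Theta_*^2 \sim K_B^2(\e^{-1}+\eta)$. The only cosmetic difference is that you re-derive the linear and nonlinear deterministic bounds directly via Cauchy--Schwarz and $N(t)\le\eta$, whereas the paper invokes the pathwise estimates from \cite[Lemmas 9.8--9.11]{hamster2017} and routes the linear term through $E\sup\nrm{V}_{L^2}^{2p}\le E\sup N(t)^p$ rather than through $\sqrt{N(t)}$; both land in the same $(\sigma^4+\eta)^pE\sup N(t)^p$ contribution.
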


In order to streamline the proof, we recall the definition of $\Pi$ from Lemma \ref{p4:lem:prml:semigroup} and define the  functions
\begin{align}\begin{split}
 \mathcal{E}_0(t)=&S(t)V(0),\\[0.2cm]
 \mathcal{E}_\mathrm{lin}(t)=&\int_0^tS(t-s)\Pi F_\mathrm{lin}\big(V(s)\big)
    \mathbf{1}_{s\leq t_\mathrm{st}} ds,\\[0.2cm]
 \mathcal{E}_\mathrm{nl}(t)=&\int_0^tS(t-s)\Pi F_\mathrm{nl}\big(V(s)\big)
    \mathbf{1}_{s\leq t_\mathrm{st}}ds,\\[0.2cm]
 \mathcal{E}_{B}(t)=&\int_0^tS(t-s)B\big(V(s)\big)
    \mathbf{1}_{s\leq t_\mathrm{st}}dW_s^Q.
\end{split}
\end{align}
The three deterministic expressions can be controlled in a direction fashion,
while the final stochastic integral was analyzed in {\S}\ref{p4:sec:supbd}.

\begin{lem}
\label{p4:lem:nls:supbL2:Edet}
For any $0 < \eta < \eta_0$, any $0 \leq \sigma \leq 1$, any $T>0$ and any $p \ge 1$,
we have the bounds
\begin{align}
\begin{split}
    E\sup_{0\leq t\leq T} \nrm{\mathcal{E}_0(t)}_{L^2}^{2p}\leq\,
    & M^{4p}\nrm{V(0)}^{2p}_{L^2},\\[0.2cm]
    E\sup_{0\leq t\leq T}\nrm{\mathcal{E}_\mathrm{lin}(t)}_{L^2}^{2p}\leq\, &M^{2p}K^{2p}_{\mathrm{lin}}E\sup_{0\leq t\leq t_\mathrm{st}}\nrm{V(t)}_{L^2}^{2p},\\
    E\sup_{0\leq t\leq T} \nrm{\mathcal{E}_\mathrm{nl}(t)}_{L^2}^{2p}\leq\, &M^{2p} K^{2p}_{\mathrm{nl}}(1+\eta^3)^{2p}\eta^{p} 
    E\sup_{0\leq t\leq t_\mathrm{st}}
    \Big[\int_0^te^{-\e(t-s)}\nrm{V(t)}_{H^1}^2ds \Big]^{p}.
\end{split}
\end{align}
\end{lem}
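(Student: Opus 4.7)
The three bounds are purely deterministic once the trajectory $V(\cdot,\omega)$ is fixed, so the plan is to derive pointwise-in-$t$ pathwise estimates first and take the supremum and expectation only at the end. No stochastic machinery from {\S}\ref{p4:sec:supb} is required; only the semigroup bounds of Lemma \ref{p4:lem:prml:semigroup} together with the assumed growth rates for $F_{\mathrm{lin}}$ and $F_{\mathrm{nl}}$ come into play.

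The first bound is immediate from $\nrm{S(t)}_{\mathcal{L}(L^2,L^2)} \le M$, which gives the pointwise inequality $\nrm{\mathcal{E}_0(t)}_{L^2} \le M \nrm{V(0)}_{L^2}$ uniformly in $t$. For $\mathcal{E}_{\mathrm{lin}}$ I would apply the triangle inequality inside the convolution, combine $\nrm{S(t-s)\Pi}_{\mathcal{L}(L^2,L^2)} \le Me^{-\b(t-s)}$ with the assumed estimate on $F_{\mathrm{lin}}$, and then pull the relevant supremum of $\nrm{V(s)}$ outside the integral. Since $\b > \e$ and $\int_0^\infty e^{-\b s}\,ds < \infty$, the remaining exponential convolution is absorbed into the constant, and monotonicity of $x\mapsto x^{2p}$ delivers the claimed bound after raising to the $2p$-th power.

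For $\mathcal{E}_{\mathrm{nl}}$ the analogous calculation produces the pathwise estimate
\begin{align*}
\nrm{\mathcal{E}_{\mathrm{nl}}(t)}_{L^2} \le M K_{\mathrm{nl}} \int_0^t e^{-\b(t-s)}\nrm{V(s)}_{H^1}^2\bigl(1+\nrm{V(s)}_{L^2}^3\bigr) \mathbf{1}_{s\le t_{\mathrm{st}}}\,ds.
\end{align*}
On the stopped set the definition of $t_{\mathrm{st}}$ forces $\nrm{V(s)}_{L^2}^2 \le \eta$, so the cubic factor is dominated by a constant multiple of $(1+\eta^3)$. Replacing $e^{-\b(t-s)}$ by the larger $e^{-\e(t-s)}$ and setting $J(t) = \int_0^t e^{-\e(t-s)}\nrm{V(s)}_{H^1}^2 \mathbf{1}_{s\le t_{\mathrm{st}}}\,ds$, this reduces to $\nrm{\mathcal{E}_{\mathrm{nl}}(t)}_{L^2}^{2p} \le (MK_{\mathrm{nl}})^{2p}(1+\eta^3)^{2p}\,J(t)^{2p}$.

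The only non-mechanical step — and the main obstacle to get right — is the $\eta$-accounting at the end. By the definition of $t_{\mathrm{st}}$, the integrand of $J$ lives on $[0,t_{\mathrm{st}}]$ where the $N(s) \le \eta$ constraint is in force, hence $J(t) \le \eta$ pathwise for every $t \ge 0$ (one checks this separately for $t \le t_{\mathrm{st}}$ and for $t > t_{\mathrm{st}}$, where in the latter case the exponential prefactor only helps). I then split $J(t)^{2p} = J(t)^p \cdot J(t)^p \le \eta^p J(t)^p$, which extracts exactly the $\eta^p$ prefactor visible in the statement and leaves $J(t)^p$ inside the final supremum-expectation on the right, as required. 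Taking the supremum over $[0,T]$ and then the expectation completes each of the three estimates.
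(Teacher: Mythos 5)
Your overall plan — pathwise estimates followed by taking the supremum and the expectation — is exactly what the paper intends: its own proof is a one-line citation to Lemmas 9.8-9.11 in \cite{hamster2017}, where the bounds are obtained by the kind of direct norm manipulations you describe. The $\mathcal{E}_0$ bound is fine (you even get the sharper prefactor $M^{2p}$, which implies the stated $M^{4p}$ since $M\ge 1$), and the $\mathcal{E}_{\mathrm{nl}}$ argument is exactly right: using the stopping time to dominate the cubic factor and then splitting $J(t)^{2p}=J(t)^p\cdot J(t)^p\le\eta^p J(t)^p$ is precisely the $\eta$-accounting needed to match the stated right-hand side.

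The $\mathcal{E}_{\mathrm{lin}}$ argument, however, has a real gap. The only assumed estimate on the linear term is $\nrm{F_{\mathrm{lin}}(v)}_{L^2}\le K_{\mathrm{lin}}\nrm{v}_{H^1}$, so "pulling the relevant supremum of $\nrm{V(s)}$ outside the integral" would have to mean $\sup_{s\le t_{\mathrm{st}}}\nrm{V(s)}_{H^1}$. But that quantity is \emph{not} controlled: the stopping time $t_{\mathrm{st}}$ (or, abstractly, hypothesis (hV)) only gives a pointwise-in-time bound on $\nrm{V(s)}_{L^2}$ together with the exponentially weighted integral of $\nrm{V(s)}_{H^1}^2$, never the pointwise $H^1$-norm. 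Conversely, if you pull out the $L^2$-supremum so as to land on the stated right-hand side $E\sup_{t\le t_{\mathrm{st}}}\nrm{V(t)}_{L^2}^{2p}$, then you have no way to enter from the $H^1$-bound on $F_{\mathrm{lin}}$. In other words, your sketch conflates two incompatible norms. Within the abstract framework of this paper, the correct move is to treat $\mathcal{E}_{\mathrm{lin}}$ exactly as you treated $\mathcal{E}_{\mathrm{nl}}$: apply Cauchy-Schwarz to the convolution, using $\beta>\e$, to produce the integrated quantity $\bigl(\int_0^t e^{-\e(t-s)}\nrm{V(s)}_{H^1}^2\mathbf{1}_{s\le t_{\mathrm{st}}}\,ds\bigr)^{1/2}\le\sqrt\eta$. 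This gives a bound proportional to $\eta^p$, which differs from the statement's $E\sup\nrm{V(t)}_{L^2}^{2p}$ but is still adequate for the downstream use in Lemma \ref{p4:lem:nls:L2:mr}. Recovering the paper's literal right-hand side seems to require the concrete structure of $F_{\mathrm{lin}}$ from \cite{hamster2017} (e.g.\ splitting off the derivative piece and absorbing it through the $t^{-1/2}e^{-\beta t}$ smoothing of $S(t)\Pi$), not just the abstract $H^1\to L^2$ bound quoted in \S\ref{p4:sec:nls}.
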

\begin{proof}
These results follow directly from Lemmas 9.8-9.11 in \cite{hamster2017},
where they were established using straightforward pathwise norm estimates.  
\end{proof}

\begin{lem}
\label{p4:lem:nls:supbL2:Est}
There is a $K \ge 1$ such that the bound
\begin{align}
    E\sup_{0\leq t\leq T}\nrm{\mathcal{E}_{B}(t)}_{L^2}^{2p}\leq \big( p^{p} + \ln(T)^{p} \big) K^{2p} 
\end{align}
holds for any integer $T \geq 2$, any $0 < \eta < \eta_0$, any $0 \leq \sigma \leq 1$
and any $p \ge 1$.
\end{lem}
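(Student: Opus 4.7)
The plan is to invoke Proposition \ref{p4:prop:supb:mr}, specifically the supremum bound \sref{p4:eq:supb:mr:eb}, applied to the progressively measurable integrand $\widetilde{B}(s) := B\big(V(s)\big)\mathbf{1}_{s\leq t_\mathrm{st}}$. With this choice, the stochastic convolution $\mathcal{E}_B(t)$ appearing in the lemma coincides with $\mathcal{E}_{\widetilde B}(t)$ from $\S\ref{p4:sec:supb}$. Everything then reduces to verifying hypotheses (hB) and (hV) with constants $\Theta_*$ and $\Lambda_*$ that are bounded purely in terms of $\eta_0$, $K_B$, $M$ and $\e$, independently of $T$, $\eta \in (0,\eta_0)$, and $\sigma\in[0,1]$.

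\textbf{Verification of (hB).} Progressive measurability of $\widetilde B$ is inherited from $V$ together with the indicator of the stopping time. For the orthogonality $\langle \widetilde B(s)w,\psi_\mathrm{tw}\rangle_{L^2}=0$ required in \sref{p4:eq:supb:b:psi:tw}, note that for $s\leq t_\mathrm{st}$ we have $\nrm{V(s)}_{L^2}^2 \leq N(s) \leq \eta < \eta_0$, so the second identity in \sref{p4:eq:nls:projs:are:zero} applies; for $s > t_\mathrm{st}$ the indicator makes the relation trivial. For the two pathwise bounds in \sref{p4:eq:supb:assumB}, I would use the standing estimates $\nrm{B(v)}_{HS}\leq K_B(1+\nrm{v}_{H^1})$ and $\nrm{S(1)B(v)}_{HS}\leq K_BM(1+\nrm{v}_{L^2})$ together with $N(s\wedge t_\mathrm{st})\leq \eta_0$ to deduce
\begin{equation*}
\int_0^t e^{-\e(t-s)}\nrm{\widetilde B(s)}_{HS}^2\,ds \leq 2K_B^2\bigl(\e^{-1}+\eta_0\bigr), \qquad \nrm{S(1)\widetilde B(s)}_{HS}^2 \leq 2K_B^2M^2(1+\eta_0)
\end{equation*}
uniformly for $0\leq s\leq t\leq T$, which yields a deterministic $\Theta_*^2$ controlled by these parameters only.

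\textbf{Verification of (hV) and conclusion.} Hypothesis (hV) demands $\langle V(s),\psi_\mathrm{tw}\rangle_{L^2}=0$ and a pathwise $L^2$-bound. The former is preserved by the flow \sref{p4:eq:nls:ttV}, since $\langle V(0),\psi_\mathrm{tw}\rangle_{L^2}=0$ by assumption, while \sref{p4:eq:nls:projs:are:zero} guarantees that both the drift $\sigma^2 F_\mathrm{lin}(V)+F_\mathrm{nl}(V)$ and the stochastic integrand $B(V)$ produce no $\psi_\mathrm{tw}$-component as long as $\nrm{V(s)}_{L^2}^2<\eta_0$. Truncating $V$ by the indicator $\mathbf{1}_{s\leq t_\mathrm{st}}$ (which does not alter $\mathcal{E}_B$) then gives $\Lambda_* = \sqrt{\eta_0}$. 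With (hB) and (hV) in hand, the estimate \sref{p4:eq:supb:mr:eb} provides a constant $K_0$ so that
\begin{equation*}
E\sup_{0\leq t\leq T}\nrm{\mathcal{E}_B(t)}_{L^2}^{2p}\leq \bigl(p^p+\ln(T)^p\bigr)K_0^{2p}\Theta_*^{2p},
\end{equation*}
from which the stated bound follows after absorbing $\Theta_*$ into a new constant $K$.

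\textbf{Main obstacle.} The conceptual content of the argument lives entirely inside Proposition \ref{p4:prop:supb:mr}; the only subtle ingredient at this stage is that the integrated Hilbert-Schmidt bound needed for (hB) mixes the $H^1$-norm of $V$ with the $e^{-\e(t-s)}$ weight. This is precisely where the non-standard weighted integral built into the definition \sref{p4:eq:mr:defStopTime} of $t_\mathrm{st}$ pays off: without it, the $H^1$-norm of $V$ would only be controlled in expectation rather than pathwise, and a deterministic $\Theta_*$ could not be produced.
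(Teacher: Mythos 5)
Your proposal matches the paper's own proof: both appeal to Proposition \ref{p4:prop:supb:mr} and establish (hB) pathwise via the standing Hilbert–Schmidt estimates on $B$ together with the stopping-time bound on the weighted $H^1$-integral, yielding a deterministic $\Theta_*^2 \sim M^2K_B^2(\e^{-1}+\eta)$. The paper simply uses $\eta$ where you use $\eta_0$ and does not bother to spell out (hV), which in any case is not invoked in the chain of lemmas that proves \sref{p4:eq:supb:mr:eb}; your extra verification is sound but inessential.
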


\begin{proof}
We will prove this by appealing to Proposition \ref{p4:prop:supb:mr}.
In order to verify (hB), we simply compute
\begin{align}
\begin{split}
    \int_0^te^{-\e(t-s)}\nrm{ B\big(V(s)\big)\mathbf{1}_{s\leq t_\mathrm{st}}}_{HS}^2 \, ds
    \leq\,& K^2_B\int_0^te^{-\e(t-s)}(1+\nrm{V(s)}_{H^1}^2) 
    \mathbf{1}_{s\leq t_\mathrm{st}} \, ds\\[0.1cm]
    \leq\,&K^2_B\left(\e^{-1}+\int_0^{\min\{t, t_{\mathrm{st}} \} } e^{-\e(t-s)}\nrm{V(s)}_{H^1}^2
     \,
    ds\right)
    \\[0.1cm]
    \leq& K^2_B(\e^{-1} + \eta),
\end{split}
\end{align}
together with
\begin{align}
\begin{split}
\norm{S(1) B\big(V(s)\big)\mathbf{1}_{s\leq t_\mathrm{st}}}_{HS}^{2}\leq& M^2 K_B^2(1+\nrm{V(s)\mathbf{1}_{s\leq t_\mathrm{st}}}^2_{L^2})
\leq  M^2K_B^2(1+\eta),
\end{split}
\end{align}
which allows us to take $\Theta_*^2=M^2K_B^2(\e^{-1}+\eta)$.
\end{proof}

\begin{proof}[Proof of Lemma \ref{p4:lem:nls:L2:mr}]
We directly find that
\begin{align}
    E\sup_{0\leq t\leq t_\mathrm{st}}\nrm{V(t)}_{L^2}^{2p}\leq 2^{2p} E\sup_{0\leq t\leq T}\big[ \nrm{\mathcal{E}_0(t)}_{L^2}^{2p}+
 \s^{4p} \nrm{\mathcal{E}_\mathrm{lin}(t)}_{L^2}^{4p}+
 \nrm{\mathcal{E}_\mathrm{nl}(t)}_{L^2}^{2p}+\s^{2p} \nrm{\mathcal{E}_{B}(t)}_{L^2}^{2p}
 \big].
\end{align}
Collecting the results from Lemmas
\ref{p4:lem:nls:supbL2:Edet} and  \ref{p4:lem:nls:supbL2:Est}
now proves the result. 
\end{proof}

\subsection{Supremum bounds in $H^1$}
\label{p4:sec:nls:H1}
In this subsection we control the $H^1$-norm of $V$
by establishing a supremum bound
for the integrated expression
\begin{align}
    \mathcal{I}(t)=\int_0^t e^{-\e(t-s)}\nrm{V(s)}^2_{H^1} ds.
\end{align}
In particular, we set out to obtain the following estimate.

\begin{lem}
\label{p4:lem:nls:H1:mr}
There exists a constant $K \ge 1$
so that for any integer $T \geq 2$, any $0 < \eta < \eta_0$, any $0 \leq \sigma \leq 1$
and any $p \ge 1$
we have the bound
\begin{align}
       E\sup_{0\leq t\leq T} \mathcal{I}(t)^p \leq K^{p}\Big[\nrm{V(0)}_{H^1}^{2p}
       +\eta^{p/2}E\big[\sup_{0\leq t\leq t_\mathrm{st}}N(t)^p\big]+\s^{2p}
       +\s^p\eta^{p/2}\sqrt{\ln(T)} 
       + \s^p \eta^{p/2} p^{p/2} \Big].
\end{align}
\end{lem}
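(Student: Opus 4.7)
The plan is to derive a pathwise decomposition of $\mathcal{I}(t)$ into pieces that each match the architecture of one of the supremum bounds established in Sections \ref{p4:sec:supb} and \ref{p4:sec:supbd}, then bound each piece separately, and finally absorb a small remainder back into the left-hand side. Throughout, the cutoff $\mathbf{1}_{s\leq t_\mathrm{st}}$ is inserted into the integrands so that the hypotheses (hB), (hV), (hF) apply with $\Lambda_* \sim \sqrt{\eta}$ (from the $L^2$-bound enforced by the stopping time together with \sref{p4:eq:st:ip:v:psi:tw}) and $\Theta_*$ of order $\sqrt{1+\eta}$ (from the Lipschitz bounds on $B$).

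First I would substitute the mild representation \sref{p4:eq:nls:ttV} into $\ip{V(s), V(s)}_{H^1}$, apply the identity \sref{p4:eq:supb:ib:splitip} to transfer the semigroups onto $L^2$-inner products plus a time derivative, use a stochastic Fubini step on the martingale contribution, and integrate the $\frac{d}{ds}$ component by parts against $e^{-\e(t-s)}$. This should produce a pathwise decomposition of the form
\begin{align*}
\mathcal{I}(t) \;=\; \mathcal{I}_0(t) \;+\; 2\s^2 \mathcal{I}_{F_\mathrm{lin}}(t) \;+\; 2 \mathcal{I}_{F_\mathrm{nl}}(t) \;+\; \s^2 \mathcal{I}_B^{\mathrm{d}}(t) \;+\; 2\s \mathcal{I}_B^{\mathrm{s}}(t),
\end{align*}
in which $\mathcal{I}_0(t) = \int_0^t e^{-\e(t-s)}\nrm{S(s)V(0)}_{H^1}^2 \, ds$, the integrals $\mathcal{I}_{F_\mathrm{lin}}$ and $\mathcal{I}_{F_\mathrm{nl}}$ have the form \sref{p4:eq:det:def:if:ibd} with $F=F_\mathrm{lin}(V)$ respectively $F=F_\mathrm{nl}(V)$, and $\mathcal{I}_B^{\mathrm{d}}, \mathcal{I}_B^{\mathrm{s}}$ coincide with the Itô-correction and martingale cross-term integrals in \sref{p4:eq:det:def:if:ibd} and \sref{eq:supb:sc:itg}.

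Each piece is then estimated by invoking the appropriate earlier result. The initial-data integral $\mathcal{I}_0(t)$ is controlled pathwise by a constant times $\nrm{V(0)}_{H^1}^2$ using Lemma \ref{p4:lem:prml:semigroup}. The Itô correction $\s^2 \mathcal{I}_B^{\mathrm{d}}$ is handled by Proposition \ref{p4:prop:det:supb:idb} with $\Theta_* \sim \sqrt{1+\eta}$, producing the $\s^{2p}$ contribution. For the nonlinear drift, Proposition \ref{p4:prop:det:supb:idf} combined with $\nrm{F_\mathrm{nl}(V)}_{L^2} \leq K_\mathrm{nl}\nrm{V}_{H^1}^2(1+\eta^{3/2})$ yields $E\sup |\mathcal{I}_{F_\mathrm{nl}}|^p \leq K^p \eta^{p/2}\, E\sup_{0\le t\le t_\mathrm{st}} \mathcal{I}(t)^p$, which is absorbed into the $\eta^{p/2}\, E\sup N(t)^p$ term since $\mathcal{I}(t) \leq N(t)$. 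For $\s^2 \mathcal{I}_{F_\mathrm{lin}}$, the same proposition together with $\nrm{F_\mathrm{lin}(V)}_{L^2}\leq K_\mathrm{lin}\nrm{V}_{H^1}$ and the Cauchy--Schwarz estimate $\int_0^t e^{-\e(t-s)}\nrm{V}_{H^1}\,ds\leq \e^{-1/2}\sqrt{\mathcal{I}(t)}$ produces, after Young's inequality, a bound of the form $K\s^{2p}$ plus a small multiple of $E\sup\mathcal{I}(t)^p$ that is absorbed into the left-hand side. Finally, the martingale term $2\s \mathcal{I}_B^{\mathrm{s}}$ is controlled via \sref{p4:eq:supb:mr:ib} of Proposition \ref{p4:prop:supb:mr} with $\Lambda_*=\sqrt{\eta}$, delivering the desired $\s^p \eta^{p/2}(\ln(T)^{p/2} + p^{p/2})$ contribution; the passage from $\max_{i\in\{1,\ldots,T\}}$ to $\sup_{t\in[0,T]}$ costs only a constant factor via the exponential damping together with a short-time application of Lemma \ref{p4:lem:prml:BDG}.

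The hard part is the decomposition in the first step. Since $V(s)$ lives only in $H^1$ pathwise, expanding $\nrm{V(s)}_{H^1}^2$ from its mild representation requires manipulating $H^1$-inner products of convolutions against the analytic semigroup in the absence of any $H^2$-regularity. The identity \sref{p4:eq:supb:ib:splitip} based on $\mathcal{J}_\mathrm{tw}$ is the crucial tool here, but the boundary terms generated by the $\frac{d}{ds}$-component (after integrating by parts against $e^{-\e(t-s)}$) must be matched exactly with the forms of $\mathcal{I}_B^{\mathrm{s}}, \mathcal{I}_B^{\mathrm{d}}, \mathcal{I}_{F_\mathrm{lin}}$ and $\mathcal{I}_{F_\mathrm{nl}}$ so that the propositions from Sections \ref{p4:sec:supb} and \ref{p4:sec:supbd} apply verbatim. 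A secondary bookkeeping difficulty is ensuring that inserting the indicator $\mathbf{1}_{s\leq t_\mathrm{st}}$ preserves the orthogonality conditions \sref{p4:eq:nls:projs:are:zero} so that (hB), (hV), (hF) remain valid for the cutoff integrands.
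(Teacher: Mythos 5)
Your plan diverges from the paper's at the very first step, and the divergence is a genuine gap. You propose to derive the decomposition
\begin{equation*}
\nrm{V(s)}_{H^1}^2 = \nrm{S(s)V(0)}_{H^1}^2 + 2\int_0^s\ip{S(s-s')V(s'),S(s-s')[\cdot]}_{H^1}\,ds' + \text{(martingale)} + \text{(It\^o correction)}
\end{equation*}
by ``substituting the mild representation into $\ip{V(s),V(s)}_{H^1}$'' and then massaging with \sref{p4:eq:supb:ib:splitip}, stochastic Fubini, and integration by parts. That does not work as stated: a direct expansion of the square of the mild form produces terms of the form $\ip{S(s)V(0),\int_0^s S(s-s')F(s')\,ds'}_{H^1}$, $\big\lVert\int_0^s S(s-s')F(s')\,ds'\big\rVert_{H^1}^2$, and analogous nested double integrals, none of which have the single-integral ``$V(s')$ against itself at time $s'$'' structure that Propositions \ref{p4:prop:supb:mr} and \ref{p4:prop:det:supb:idf}--\ref{p4:prop:det:supb:idb} are built to handle. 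The paper obtains the displayed identity by invoking the \emph{mild It\^o formula} of Da Prato, Jentzen and R\"ockner \cite{dapratomild}, which is a substantive theorem (roughly, an It\^o formula valid in the critical mild-solution setting without $H^2$ regularity), not a bookkeeping consequence of expanding a square. Without that tool you have no route from the mild form \sref{p4:eq:nls:ttV} to the clean decomposition, and the rest of your argument hangs in the air.

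Two secondary issues. First, you misplace the role of identity \sref{p4:eq:supb:ib:splitip}: in the paper it is used \emph{inside} the proofs of the supremum propositions (Lemmas \ref{p4:lem:st:bnds:cal:k:sq} and its deterministic analogues) to tame the $H^1$-inner products after the decomposition is already in hand; it is not a device for producing the decomposition itself. Second, your proposed ``short-time BDG step'' to upgrade $\max_{i\in\{1,\ldots,T\}}$ to $\sup_{[0,T]}$ for the martingale piece is unnecessary: the paper reduces $\sup_{0\le t\le T}\mathcal{I}(t)\le e^{\e}\max_{i\in\{1,\ldots,T\}}\mathcal{I}(i)$ \emph{at the outset} by the purely pathwise observation that $e^{-\e(t-s)}\le e^{\e}e^{-\e(i-s)}$ for $i-1\le t\le i$, so that after the decomposition only integer times need be controlled for $\mathcal{I}^{\mathrm{s}}_B$, which is exactly what \sref{p4:eq:supb:mr:ib} supplies. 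Your estimates for the individual pieces (absorbing $\mathcal{I}_{F_{\mathrm{nl}}}$ into $\eta^{p/2}E\sup N^p$, using $\Lambda_*=\sqrt{\eta}$ and $\Theta_*\sim\sqrt{1+\eta}$, absorbing a small multiple of $E\sup\mathcal{I}^p$ via Young for the linear piece) are all reasonable and in the same spirit as the paper's Lemmas \ref{p4:lem:nls:bnd:ilin:inl}--\ref{p4:lem:nls:bnd:ibd:ibs}, but they cannot salvage the missing mild It\^o input.
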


Compared to {\S}\ref{p4:sec:nls:L2} and \cite[{\S}9]{hamster2017}, our approach here is rather indirect.
First of all, we
exploit the fact that $T$ is an integer
to compute
\begin{align}
\begin{split}
    \sup_{0 \leq t\leq T}\mathcal{I}(t)
 =&\, \max_{i\in \{1,....,T\}}\sup_{i-1\leq t\leq i}\int_0^te^{-\e(t-s)}\nrm{V(s)}^2_{H^1}  ds\\[0.2cm]
\leq&\,  \max_{i\in \{1,....,T\}}
e^{\e}\int_0^ie^{-\e(i-s)}\nrm{V(s)}^2_{H^1}  ds\\[0.2cm]
=&\, e^{\e} \max_{i\in \{1,....,T\}}\mathcal{I}(i).
\end{split}
\end{align}
We continue by applying
a mild It\^o formula \cite{dapratomild} to $\nrm{V(s)}_{H^1}^2$,
which yields
\begin{align}
\begin{split}
    \nrm{V(s)}_{H^1}^2=&\nrm{S(s)V(0)}_{H^1}^2+2\s^2\int_0^s\ip{S(s-s')V(s'),S(s-s')F_\mathrm{lin}(V(s'))}_{H^1}ds'\\[0.2cm]
    &+2\int_0^s\ip{S(s-s')V(s'),S(s-s')F_\mathrm{nl}\big(V(s')\big)}_{H^1}ds'\\[0.2cm]
    &+2\s\int_0^s\ip{S(s-s')V(s'),S(s-s')B\big(V(s')\big)dW^Q_{s'}}_{H^1}\\[0.2cm]
    &+\s^2\int_0^{s}\nrm{S(s-s')B\big(V(s')\big)}^2_{HS(L_Q^2,H^1)}ds'.
\end{split}
\end{align}
In particular, upon introducing the components 
\begin{align}
\begin{split}
\mathcal{I}_0(t)=& \int_0^t e^{-\e(t-s)}\nrm{S(s)V(0)}_{H^1}^2ds,\\[0.2cm]
\mathcal{I}_{\mathrm{lin}}(t)=&\int_0^te^{-\e(t-s)}\int_0^s\ip{S(s-s')V(s'),S(s-s')\Pi F_\mathrm{lin}\big(V(s')\big)\mathbf{1}_{s'\leq t_\mathrm{st}}}_{H^1}ds'ds,\\[0.2cm]
\mathcal{I}_{\mathrm{nl}}(t)=&\int_0^te^{-\e(t-s)}\int_0^s\ip{S(s-s')V(s'),S(s-s')\Pi F_\mathrm{nl}\big(V(s')\big)\mathbf{1}_{s'\leq t_\mathrm{st}}}_{H^1}ds'ds,\\[0.2cm]
\mathcal{I}^{\mspace{1mu}\mathrm{s}}_{B}(t)=&\int_0^ie^{-\e(t-s)}\int_0^s\ip{S(s-s')V(s'),S(s-s')B\big(V(s')\big)\mathbf{1}_{s'\leq t_\mathrm{st}}dW^Q_{s'}}_{H^1}ds,\\[0.2cm]
\mathcal{I}^\mathrm{d}_{B}(t)=&\int_0^ie^{-\e(t-s)}\int_0^s\nrm{S(s-s')B\big(V(s')\big)\mathbf{1}_{s'\leq t_\mathrm{st}}}^2_{HS(L_Q^2,H^1)}ds'ds
\end{split}
\end{align}
and applying Jensen's inequality,
we obtain the bound
\begin{align}
\label{p4:eq:nls:splitIbbound}
\begin{split}
   E\sup_{0\leq t\leq t_\mathrm{st}} \mathcal{I}(t)^p \leq&\,
     e^{\e p}5^{p-1}E\max_{i\in \{1,....,T\}} 
    \left[\mathcal{I}_0(i)^p+2^p\s^{2p}\mathcal{I}_{\mathrm{lin}}(i)^p+2^p\mathcal{I}_{\mathrm{nl}}(i)^p
    +2^p\s^p\mathcal{I}^{\mspace{1mu}\mathrm{s}}_B(i)^p
    +\s^{2p}\mathcal{I}^{\mathrm{d}}_B(i)^p
    \right]\\[0.2cm]
    \leq&\,
     10^p e^{\e p}E\sup_{0\leq t\leq T}
    \left[\mathcal{I}_0(t)^p+\s^{2p}\mathcal{I}_{\mathrm{lin}}(t)+\mathcal{I}_{\mathrm{nl}}(t)^p
        +\s^{2p}\mathcal{I}^{\mathrm{d}}_B(t)^p
    \right]
    \\[0.2cm]
    & \qquad \qquad +10^p e^{\e p}\s^p E\max_{i\in \{1,....,T\}}\mathcal{I}^{\mspace{1mu}\mathrm{s}}_B(i)^p.
\end{split}
\end{align}
This decomposition highlights the fact that supremum bounds over deterministic integrals are easily obtained, while the stochastic integral needs
to be handled with care.

\begin{lem}
\label{p4:lem:nls:bnd:ilin:inl}
There exists a constant $K \ge 1$
so that for any integer $T \geq 2$, any $0 < \eta < \eta_0$, any $0 \leq \sigma \leq 1$
and any $p \ge 1$
we have the bounds
\begin{align}
\begin{split}
    E\sup_{0\leq t\leq T} \mathcal{I}_{\mathrm{lin}}(t)^p \leq &\, K^p\eta^p, \\
    E\sup_{0\leq t\leq T} \mathcal{I}_{\mathrm{nl}}(t)^p   \leq&\, 
    K^p \eta^{p/2} E\sup_{0\leq t\leq T} \Big[\int_0^te^{-\e(t-s)}\nrm{V(s)}_{H^1}^2\mathbf{1}_{s\leq t_\mathrm{st}}ds \Big]^p.
\end{split}
\end{align}
\end{lem}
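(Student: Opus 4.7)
The plan is to deduce both bounds from Proposition \ref{p4:prop:det:supb:idf} by choosing the placeholder $F$ appropriately. Specifically, I would apply the proposition twice: once with $F(s') = \Pi F_{\mathrm{lin}}\big(V(s')\big)\mathbf{1}_{s'\leq t_{\mathrm{st}}}$ to control $\mathcal{I}_{\mathrm{lin}}$, and once with $F(s') = \Pi F_{\mathrm{nl}}\big(V(s')\big)\mathbf{1}_{s'\leq t_{\mathrm{st}}}$ to control $\mathcal{I}_{\mathrm{nl}}$. The first thing to check is that the hypotheses (hF) and (hV) are satisfied in both cases. Hypothesis (hF) holds automatically because $\Pi$ annihilates the $\psi_{\mathrm{tw}}$-direction by construction. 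For (hV), I note that whenever $s \leq t_{\mathrm{st}}$ we have $\|V(s)\|_{L^2}^2 \leq N(s) \leq \eta$, so the constant $\Lambda_*$ appearing in Proposition \ref{p4:prop:det:supb:idf} can be taken to be $\sqrt{\eta}$; the orthogonality $\langle V(s),\psi_{\mathrm{tw}}\rangle_{L^2}=0$ follows from \sref{p4:eq:nls:projs:are:zero} together with the fact that $V(0)\perp\psi_{\mathrm{tw}}$.

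For $\mathcal{I}_{\mathrm{lin}}$, Proposition \ref{p4:prop:det:supb:idf} together with the bound $\|F_{\mathrm{lin}}(v)\|_{L^2}\leq K_{\mathrm{lin}}\|v\|_{H^1}$ yields
\begin{equation*}
E\sup_{0\leq t\leq T}\mathcal{I}_{\mathrm{lin}}(t)^p \leq K^{p}\eta^{p/2}K_{\mathrm{lin}}^p \, E\sup_{0\leq t\leq T}\Big[\int_0^t e^{-\e(t-s)}\|V(s)\|_{H^1}\mathbf{1}_{s\leq t_{\mathrm{st}}}\,ds\Big]^p.
\end{equation*}
I would then apply Cauchy--Schwarz in the inner integral, splitting $e^{-\e(t-s)}=e^{-\e(t-s)/2}\cdot e^{-\e(t-s)/2}$, to estimate
\begin{equation*}
\int_0^t e^{-\e(t-s)}\|V(s)\|_{H^1}\mathbf{1}_{s\leq t_{\mathrm{st}}}\,ds \leq \e^{-1/2}\Big(\int_0^t e^{-\e(t-s)}\|V(s)\|_{H^1}^2\mathbf{1}_{s\leq t_{\mathrm{st}}}\,ds\Big)^{1/2}\leq \e^{-1/2}\sqrt{\eta},
\end{equation*}
where the last inequality uses the defining property of $t_{\mathrm{st}}$. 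Combining the two inequalities gives the stated $\eta^p$ bound after absorbing all constants into $K$.

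For $\mathcal{I}_{\mathrm{nl}}$, the same approach applies, but I instead use $\|F_{\mathrm{nl}}(v)\|_{L^2}\leq K_{\mathrm{nl}}\|v\|_{H^1}^2(1+\|v\|_{L^2}^3)$. Since $\|V(s)\|_{L^2}^2\leq\eta<\eta_0$ on the relevant time window, the factor $(1+\|V(s)\|_{L^2}^3)$ is uniformly bounded by a constant depending only on $\eta_0$. Consequently, Proposition \ref{p4:prop:det:supb:idf} delivers
\begin{equation*}
E\sup_{0\leq t\leq T}\mathcal{I}_{\mathrm{nl}}(t)^p \leq K^{p}\eta^{p/2}\, E\sup_{0\leq t\leq T}\Big[\int_0^t e^{-\e(t-s)}\|V(s)\|_{H^1}^2\mathbf{1}_{s\leq t_{\mathrm{st}}}\,ds\Big]^p,
\end{equation*}
which is the claimed bound. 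No step is particularly delicate here: the heavy lifting has already been done in Proposition \ref{p4:prop:det:supb:idf}, and the only matter requiring a little care is the Cauchy--Schwarz step in the linear case, where one must pull out a factor of $\sqrt{\eta}$ from the integrated $H^1$-norm rather than spend it on gaining an $H^1$-power.
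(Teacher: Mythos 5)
Your proof is correct and follows essentially the same route as the paper: both invoke Proposition \ref{p4:prop:det:supb:idf} with $\Lambda_* = \sqrt{\eta}$, verify (hF) and (hV) via the stopping time, substitute the assumed bounds on $F_{\mathrm{lin}}$ and $F_{\mathrm{nl}}$, and apply Cauchy--Schwarz in the linear case to extract the extra $\sqrt{\eta}$. The only cosmetic difference is that you explicitly note the role of $\Pi$ in ensuring (hF), whereas the paper attributes (hF) more loosely to the stopping time and suppresses $\Pi$ in the displayed computation (harmless, since $\Pi$ is bounded).
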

\begin{proof}
In order to exploit Proposition \ref{p4:prop:det:supb:idf},
we first note that the orthogonality conditions
\sref{p4:eq:st:ip:v:psi:tw} and
\sref{p4:eq:supb:F:psi:tw} hold true by virtue of the stopping time.
In particular, (hF) and (hV) are both satisfied, 
with $\Lambda_* = \sqrt{\eta}$.
The stated bounds can hence
be obtained by using 
the computation
\begin{align}
\begin{split}
    \int_0^te^{-\e(t-s)}\nrm{F_\mathrm{lin}(V(s))\mathbf{1}_{s\leq t_\mathrm{st}}}_{L^2}ds\leq\,
    & K_{\mathrm{lin}}\int_0^te^{-\e(t-s)}\nrm{V(s)}_{H^1}\mathbf{1}_{s\leq t_\mathrm{st}}ds\\
    \leq\, & K_{\mathrm{lin}} \frac{1}{\sqrt{\e}}\sqrt{\int_0^te^{-\e(t-s)}\nrm{V(s)}_{H^1}^2\mathbf{1}_{s\leq t_\mathrm{st}}ds }
    \\
    \leq\, & K_{\mathrm{lin}} \sqrt{\frac{\eta}{\e}},
\end{split}
\end{align}
together with
\begin{align}
\begin{split}
    \int_0^te^{-\e(t-s)}\nrm{F_\mathrm{nl}(V(s))\mathbf{1}_{s\leq t_\mathrm{st}}}_{L^2}ds\leq\,
    & K_{\mathrm{nl}}\int_0^te^{-\e(t-s)}\nrm{V(s)}_{H^1}^2(1+\nrm{V(s)}_{L^2}^3)\mathbf{1}_{s\leq t_\mathrm{st}}ds\\[0.1cm]
    \leq\, & K_{\mathrm{nl}}(1+\eta^3)\int_0^te^{-\e(t-s)}\nrm{V(s)}_{H^1}^2\mathbf{1}_{s\leq t_\mathrm{st}}ds
\end{split}
\end{align}
to evaluate the right-hand side 
of \sref{p4:eq:supb:mr:ib:detF}.
\end{proof}

\begin{lem}
\label{p4:lem:nls:bnd:ibd:ibs}
There exists a constant $K \ge 1$
so that for any integer $T \geq 2$, any $0 < \eta < \eta_0$, any $0 \leq \sigma \leq 1$
and any $p \ge 1$
we have the bounds
\begin{align}
\begin{split}
   E\sup_{0\leq t\leq T}  \mathcal{I}_B^{\mathrm{d}}(t)^p \leq &\,  K^p,\\[0.2cm]
   E\max_{i\in \{1,....,T\}}  \mathcal{I}^{\mspace{1mu}\mathrm{s}}_{B}(t)^p\leq &\, K^p \eta^{p/2} \big( p^{p/2} + \ln(T)^{p/2} \big).
\end{split}
\end{align}
\end{lem}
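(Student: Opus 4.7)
The plan is to invoke Propositions \ref{p4:prop:det:supb:idb} and \ref{p4:prop:supb:mr} with the stopped integrand
\[
  \widetilde{B}(s) := B\big(V(s)\big) \mathbf{1}_{s\leq t_{\mathrm{st}}},
\]
so the only real work is verifying hypotheses (hB) and (hV) for $\widetilde{B}$ and (when needed) for the stopped perturbation $\widetilde{V}(s) := V(s)\mathbf{1}_{s\leq t_{\mathrm{st}}}$. This is the same pattern already used in the proof of Lemma \ref{p4:lem:nls:supbL2:Est}.

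For the deterministic integral $\mathcal{I}_B^{\mathrm{d}}$, I would first observe that by the definition \sref{p4:eq:mr:defStopTime} of $t_{\mathrm{st}}$ together with the growth estimate $\nrm{B(v)}_{HS} \le K_B(1+\nrm{v}_{H^1})$, we have
\[
  \int_0^t e^{-\e(t-s)}\nrm{\widetilde{B}(s)}_{HS}^2\,ds
   \leq K_B^2\Big(\e^{-1} + \int_0^{\min\{t,t_{\mathrm{st}}\}} e^{-\e(t-s)}\nrm{V(s)}_{H^1}^2\,ds\Big)
   \leq K_B^2(\e^{-1}+\eta),
\]
while $\nrm{S(1)\widetilde{B}(s)}_{HS}^2 \le M^2 K_B^2(1+\eta)$ by the second growth bound on $B$ together with $\nrm{V(s)}_{L^2}^2 \le \eta$ for $s \le t_{\mathrm{st}}$. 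These two estimates deliver (hB) with $\Theta_*^2 = M^2K_B^2(\e^{-1}+\eta)$, which is uniformly bounded for $0<\eta<\eta_0$. Proposition \ref{p4:prop:det:supb:idb} then yields the bound $K^p \Theta_*^{2p}$, which can be absorbed into a new constant $K^p$.

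For the stochastic integral $\mathcal{I}^{\mspace{1mu}\mathrm{s}}_B$, (hB) holds with the same $\Theta_*$ as above, where the orthogonality condition \sref{p4:eq:supb:b:psi:tw} is automatic from \sref{p4:eq:nls:projs:are:zero} combined with the stopping time (recall $\nrm{V(s)}_{L^2}^2 < \eta < \eta_0$ on $\{s \le t_{\mathrm{st}}\}$). For (hV), applied to the stopped perturbation $\widetilde{V}$, the orthogonality $\ip{\widetilde{V}(s),\psi_{\mathrm{tw}}}_{L^2}=0$ is preserved by the construction of $\Gamma$, and we may take $\Lambda_* = \sqrt{\eta}$ since $\nrm{V(s)}_{L^2}\leq \sqrt{\eta}$ for $s\leq t_{\mathrm{st}}$. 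The second bound in Proposition \ref{p4:prop:supb:mr} then directly gives
\[
  E\max_{i\in\{1,\ldots,T\}}\abs{\mathcal{I}^{\mspace{1mu}\mathrm{s}}_B(i)}^p
   \leq \big(p^{p/2} + \ln(T)^{p/2}\big)\, K^p \Lambda_*^p \Theta_*^p,
\]
and absorbing the uniformly bounded $\Theta_*^p$ into the constant leaves the factor $\Lambda_*^p = \eta^{p/2}$, as claimed. The only subtle point, which is already settled in \S\ref{p4:sec:supb:ib}, is that the chaining/mild-BDG machinery behind Proposition \ref{p4:prop:supb:mr} handles the critical regularity of the $H^1$-valued stochastic convolution; on our side no further work is needed beyond the hypothesis check above.
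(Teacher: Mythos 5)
Your proof is correct and follows essentially the same route as the paper: verify (hB) for the stopped integrand $B(V)\mathbf{1}_{s\le t_{\mathrm{st}}}$ with $\Theta_*^2 = M^2K_B^2(\e^{-1}+\eta)$, verify (hV) with $\Lambda_*=\sqrt{\eta}$ via the stopping time, and then plug into Propositions \ref{p4:prop:det:supb:idb} and \ref{p4:prop:supb:mr}. The only cosmetic difference is that you re-derive the (hB) bounds explicitly whereas the paper simply refers back to the computation already done in \S\ref{p4:sec:nls:L2}; your remarks on the orthogonality conditions being preserved on $\{s\le t_{\mathrm{st}}\}$ match the paper's reliance on $\eta<\eta_0$ and the construction of $\Gamma$.
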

\begin{proof}
Recall from the proof of Lemma \ref{p4:lem:nls:L2:mr} that (hB) holds with  $\Theta_*^2=M^2K_B^2(\e^{-1}+\eta)$.
The first estimate now follows directly from
Proposition \ref{p4:prop:det:supb:idb}, while the second
can be obtained from Proposition 
\ref{p4:prop:supb:mr} using the fact that (hV) is satisfied
with $\Lambda_* = \sqrt{\eta}$.
\end{proof}

\begin{proof}[Proof of Lemma \ref{p4:lem:nls:H1:mr}]
The bound follows immediately from the decomposition
\sref{p4:eq:nls:splitIbbound} and Lemmas  
\ref{p4:lem:nls:bnd:ilin:inl}-\ref{p4:lem:nls:bnd:ibd:ibs}.
\end{proof}

\begin{proof}[Proof of Proposition \ref{p4:prp:nls:general}]
Summing the estimates from Lemmas \ref{p4:lem:nls:L2:mr}
and \ref{p4:lem:nls:H1:mr} yields the bound
\begin{equation}
\begin{array}{lcl}
E \, \big[\sup_{0\leq t\leq t_{\mathrm{st}} }N(t)^p\big]
& \leq & K^p\Big[\nrm{V(0)}_{L^2}^{2p}+\s^{2p}\ln(T)^p
+ \s^{2p} p^p
+\s^p \eta^{p/2} \ln(T)^{p/2}
+ \s^p \eta^{p/2} p^{p/2}
\\[0.2cm]
& & \qquad \qquad
+(\s^{4p}+\eta^{p/2})E\big[
\sup_{0\leq t\leq t_\mathrm{st}}
N(t)^p \big]\Big].
\end{array}
\end{equation}
Upon restricting the size of $\s^4+\sqrt{\eta}$, the result readily 
follows. 
\end{proof}

\bibliographystyle{klunumHJ}
\bibliography{ref}

\end{document}